\newcommand{\N}{\mathbb{N}}
\newcommand{\Z}{\mathbb{Z}}
\newcommand{\C}{\mathbb{C}}
\newcommand{\tree}{\mathcal{T}_d}
\newcommand{\aut}{\text{Aut}\left(\tree \right)}
\newcommand{\hil}{\ell^2 \left( \tree ^0 \right)}
\newcommand{\St}{\textrm{Stab}}
\newcommand{\Ri}{\textrm{Rist}}
\newcommand{\fond}{C^{*}_{\rho}\left( G \right)}
\providecommand{\norm}[1]{\lVert#1\rVert} 
\providecommand{\norminfty}[1]{\lVert#1\rVert_{_{\infty}}}
\def\Ker{\mathrm{Ker}}
\def\Hom{\mathrm{Hom}}
\def\Irr{\mathrm{Irr}}
\def\surj{\twoheadrightarrow}
\def\into{\hookrightarrow}
\def\onto{\twoheadrightarrow}
\theoremstyle{plain} 
\newtheorem{theo}{Theorem}[section]
\newtheorem{cor}[theo]{Corollary}
\newtheorem{prop}[theo]{Proposition}
\newtheorem{prop-defi}[theo]{Proposition-Definition}
\newtheorem{lem}[theo]{Lemma}
\newtheorem*{theoetoile}{Theorem} 
\newtheorem*{propetoile}{Proposition}
\newtheorem{proper}[theo]{Property}
\theoremstyle{definition} 
\newtheorem{defi}[theo]{Definition}
\newtheorem{notation}[theo]{Notation}
\newtheorem*{defitoile}{Definition}
\theoremstyle{remark}
\newtheorem{ex}[theo]{Example}
\newtheorem{remarque}[theo]{Remark}
\numberwithin{equation}{section}
\begin{document}

\title{Fundamental $C^*$-algebras associated to automata groups.}

\maketitle








\centerline{\footnotesize{by \textsc{Jean-Fran\c{c}ois Planchat}}{\footnote{J.-F. PLANCHAT, Mathematisches Institut, Georg-August Universit\"{a}t G\"{o}ttingen. \url{planchat@uni-math.gwdg.de}}}}

\begin{abstract} 
We propose to study some properties of the $C^*$-algebra naturally built out of the fundamental action that an automaton group $G$ admits on a regular rooted trees $\tree$. 
\end{abstract}

\section{Introduction}
This paper aims to study some $C^*$-algebras associated to groups generated by automata. We will mainly focus on the \emph{fundamental} $C^*$-algebra of such a group $G$, that is the one naturally built out of the action of $G$ on a regular rooted tree $\tree$ that defines it as an automaton group. The paper is organised as follows. The first section aims to recall some basic definitions and properties of regular rooted trees and their group of automorphisms, and to fix some notation which will be used in the sequel. In Section 2, we introduce the fundamental $C^*$-algebra of a automaton group and study some general properties of it. For instance, we will be interested in spectrum approximation for non-normal elements in the von Neumann algebra generated by this fundamental $C^*$-algebra. This problem is the one that motivated I. Marin and the author to write the Appendix. Section 3 aims to extend the study initiated in \cite{Moi11} of the relation between the fundamental and the regular representation for 
an automaton group. The last one focus on groups generated by \emph{reversible} automata. In this case, the self-similar structure of $G$ allows us to get results on its reduced $C^*$-algebra $C^*_{\lambda}\left(G\right)$, and on certain $C^*$-algebras considered by V. Nekrashevych in \cite{nek09}.

\section{Groups generated by automata}
We start by recalling some definitions on rooted trees and groups generated by automata. For a general presentation on the subject, we refer to \cite{bourbakiandrzej,livrenek}. 
Let $d\geq 1$ be an integer and $\tree$ be the \emph{$d$-regular rooted tree}. That is, $\tree$ is the tree in which all vertices have degree $d+1$, except one which has degree $d$. This last vertex is called the \emph{root} of $\tree$ and is denoted by $\varnothing$. If we let $X=\left\lbrace 1,\dots,d \right\rbrace $, then $\tree$ is the Cayley graph of the free monoid $X^*$ generated by $X$. Therefore, the vertex set $\tree^0$ can be identified with $X^*$. In particular, each vertex $v\in \tree^0$ induces an isometry on $\tree$ by left translation. We denote by $v\tree$ the image of $\tree$ under $v$ and call it the \emph{subtree rooted at $v$}: 

\begin{notation}
\[v\tree := \left\lbrace x \in \tree \ | \ v \in \left[ \varnothing,x\right] \right\rbrace \] 
where $\left[ a,b\right] $ is the segment between $a$ and $b$.    
\end{notation}

Let us also introduce, for all non-negative integer $n$, the \emph{$n$-th level} $L_n$ which consists of all vertices whose distance to the root is $n$:

\begin{notation}
$L_n := \left\lbrace v\in \tree^0 \ | \ \textrm{dist}\left(\varnothing,v \right)=n \right\rbrace $ 
\end{notation}
 
We refer to Fig. \ref{figarbre} for a less rigourous but more visual presentation of these definitions.

\begin{figure}[ht]
\begin{center}
\begin{pspicture}(6,7)
\put(3.2,6){$\mbox{\small $\varnothing$}$}

\psline(3,6)(1.5,5) \psline(3,6)(2,5) \psline[linestyle=dotted](2.3,5)(2.7,5) \psline[linestyle=dotted](3.2,5)(4.1,5) \psline(3,6)(4.5,5) 
\put(1.38,5){$\mbox{\tiny $1$}$} \put(1.88,5){$\mbox{\tiny $2$}$} \put(4.62,5){$\mbox{\tiny $d$}$}

\psline(1.5,5)(0.5,4) \psline(1.5,5)(0.8,4) \psline[linestyle=dotted](1.1,4)(1.3,4) \psline(1.5,5)(1.7,4) 
\put(.30,4){$\mbox{\tiny $11$}$} \put(1.8,4){$\mbox{\tiny $1 d$}$}
\psline(4.5,5)(4.3,4) \psline(4.5,5)(4.6,4) \psline[linestyle=dotted](4.9,4)(5.2,4) \psline(4.5,5)(5.5,4) 
\put(3.90,4){$\mbox{\tiny $d 1$}$} \put(5.5,4){$\mbox{\tiny $d d$}$}

\psline[linestyle=dotted](0.5,4)(-0.3,3) \psline[linestyle=dotted](5.5,4)(6.3,3) 

\put(3.5,3){$\mbox{\tiny $v$}$}
\psline(3.5,3)(3,2) \psline(3.5,3)(3.2,2) \psline[linestyle=dotted](3.5,2)(3.7,2) \psline(3.5,3)(4,2)
\psline[linestyle=dotted](3,2)(2.5,1) \psline[linestyle=dotted](4,2)(4.5,1)
\psline[linestyle=dashed](3,6)(3,5) \psline[linestyle=dashed](3,5)(3.2,4) \psline[linestyle=dashed](3.2,4)(3.5,3)

\psframe[framearc=1,linestyle=dotted](0.9,4.85)(5.1,5.15) \put(5.5,5){$\mbox{\tiny $L_1$}$}
\psframe[framearc=1,linestyle=dotted](0,3.85)(6.2,4.15) \put(6.3,4){$\mbox{\tiny $L_2$}$}

\psparabola[linestyle=dotted](2,0.5)(3.5,3.2) \put(4.5,2.5){$\mbox{\tiny $v \tree $}$}

\end{pspicture} \end{center}\caption{$d$-regular rooted tree $\tree$}\label{figarbre}  \end{figure}

The boundary $\partial \tree$ of $\tree$ is the set of infinite geodesic paths starting from the root $\varnothing$. This set is a Cantor set; if we denote by $\partial \left(v\tree\right)$ the subset of geodesics going through the vertex $v\in\tree^0$, then
\[
 \mathscr F :=\left\lbrace \partial \left(v\tree\right) \ | \ v\in \tree^0\right\rbrace 
\]
is a fundamental system of open sets in $\partial \tree$. Let us denote by $\mu_n$ the uniform probability measure on the $n$-th level $L_n$. The limit $\mu$ of the these measures is a probability measure on $\partial \tree$ characterized by 
\[
 \mu \left( \partial \left(v\tree\right)\right)= \frac{1}{d^{\textrm{L}\left(v\right)}}
\]
where $\textrm{L}\left(v\right) $ is the level of the vertex $v$, i.e. $\textrm{L}\left(v\right)=\textrm{dist}\left( v,\varnothing\right)$.

Now, let $\aut $ be the group of the automorphisms of $\tree$. Each element $g$ in $\aut$ fixes the root and for all $n$, $g$ preserves the level $L_n$, as well as the finite set $\left\lbrace v\tree \ | \ v \in L_n\right\rbrace $ of subtrees rooted at its vertices. In particular, $\aut$ acts faithfully on $\partial \tree$ by measure preserving homeomorphisms. 

The self-similar structure of $\tree$ implies that the group $\aut $ admits a natural decomposition in terms of the automorphisms group of subtrees. 

More precisely, $g$ induces a permutation $g_1$ on the set $L_1$, as well as an isomorphism $\varphi_{g(v)}(g)$ from $v\tree$ onto $g(v)\tree$, for every vertex $v$ in $L_1$. These two subtrees are canonically isomorphic to $\tree$;  therefore, $\varphi_{g(v)}(g)$ can be seen as an element of $\aut $. 

It is easy to see that this data completely determines the action of $g$ on $\tree $. In fact, we have the following decomposition:
\[\begin{array}{ccc}
\aut   & \overset{\Phi}{\longrightarrow}  & \left( \aut  \times \dots \times \aut  \right)  \rtimes \mathfrak{S}_{d}   \\
 g  &  \longrightarrow   &  ( \varphi_{1}(g), \dots, \varphi_{d}(g)) . g_1 
\end{array}
\]
where $\mathfrak{S}_{d}$ denotes the symetric group on the set of $d$ elements $L_1$. Its action on $\left( \text{Aut}\left(\tree \right) \times \dots \times \text{Aut}\left(\tree \right) \right) $ is the permutation of the coordinates. 

\begin{defi}
The isomorphism $\Phi$ is called the \emph{recursion isomorphism}. 
\end{defi}

Generalizing further, we denote by $\Phi^{(n)}$ the decomposition of $\text{Aut}\left(\tree \right) $ with respect to the level $L_n$:
\begin{equation}
 \begin{array}{ccc}
 \text{Aut}\left(\tree \right)  & \overset{\Phi^{(n)}}{\longrightarrow} & \left(\prod_{w \in L_n}   \aut \right)  \rtimes \text{Aut}\left(\rm T_{d,n}\right)   \\
 g  &  \longrightarrow   &  \left( \varphi_w \right)_{w\in L_n}  .\ g_n 
\end{array}
\end{equation}
where $\rm T_{d,n} $ is the restriction of $\tree$ to its $n$ first levels, and $\text{Aut}(\rm T_{d,n} )$ is the restriction of $\text{Aut}\left(\tree \right) $ to this stable subgraph $\rm T_{d,n} $. We remark that an element $g\in \text{Aut}\left(\tree \right) $ fixes the restriction $\rm T_{d,n} $ if and only if $g_n$ equals 1, which is also equivalent to $g$ fixing the $n$-th level $L_n$.

Following \cite{nek09}, we can write the relation between the actions of $\tree^0$ and $\aut$ by: 
\begin{eqnarray}\label{equationsemigroupe}
\forall g\in \aut, \ \forall w \in \tree^0, \ g \circ w = g(w)\circ \varphi_{g(w)}(g).  
\end{eqnarray}
   
We can now define the class of automata groups (note that in this article, we suppose every automaton to be finite): 

\begin{defi}
A subgroup $G$ of $\aut$ is said to be generated by a automaton (on $d$ letters) if $G$ admits a finite generating set $S$ which fulfills
\[
 \forall g\in S, \ \forall v\in L_1, \ \varphi_v \left( g\right) \in S.
\]
\end{defi}

Let us finish this preliminary section with a definition and some general notation:

\begin{defi}
 A subgroup $G$ of $\aut$ will be said to act \emph{spherically transitively} on $\tree$ if and only if its action on each level is transitive.
\end{defi}

\begin{notation}
 Let $G$ be a group acting on a set $X$. Let $g\in G$ and $A\subset X$. Then,
\[
 \St_G \left(A\right) :=\left\lbrace g\in G \ | \ \forall a \in A, \ g\left(a\right)=a \right\rbrace ,
\]
\[
 \text{Fix}_X \left(g \right) := \left\lbrace x\in X \ | \ g\left(x\right)=x\right\rbrace. 
\]

\end{notation}

\section{The fundamental $C^*$-algebras associated to an automaton group}

Let $d\geq 2$ be an integer and $G$ a subgroup of $\aut$.   

\subsection{Definitions}

Let $\rho$ be the fundamental representation of $G$ on $\ell^2 \left( \tree ^0 \right)$ induced from its action on $\tree$:
\[
\begin{array}{ccc}
G & \overset{\rho}{\longrightarrow} & \mathcal{U} \left( \hil \right)\\
g & \longrightarrow & \xi  \overset{\rho(g)}{\rightarrow} \xi(g^{-1}.) 
\end{array}
\]
where $\mathcal{U} \left( \hil \right)$ is the group of unitary operators of the Hilbert space $ \ell^2 \left( \tree ^0 \right)$. 

The map $\rho$ can be extended by linearity to a $\ast$-homomorphism from $\C G$ into the bounded operators $\mathcal{B} \left( \hil \right)$.   

\begin{defi}
The fundamental $C^*$-algebra of $G$ is
\[
 \fond := \overline{\rho \left( \mathbb{C}G \right) }.
\] 
\end{defi}

\subsection{Approximation of norm and spectrum}

Let $\ell^2 \left( L_n \right)\simeq \C^{d^n}$ be the subspace of $\hil$ of functions whose support is included in the $n$-th level $L_n$. We denote by $p_n$ the orthonormal projection onto $\ell^2 \left( L_n \right)$, i.e.
\[
 p_n: \hil \surj \ell^2 \left( L_n \right), \ p_n=p_{n}^*=p_{n}^2. 
\]

For all $n\in\N$ and $x\in\fond$, let
\[
 \mathcal R_n \left( G \right) := p_n \fond p_n , \ x_n := p_n x p_n.
\]

The group $\aut$ preserves the level $L_n$, and thus all its subgroups admit a representation $\rho_n$ on $\ell^2 \left( L_n \right)$. Clearly, $\rho_n$ is contained in $\rho$ and 
\[
\begin{array}{ccc}
\fond & \overset{\rho_n}{\longrightarrow} & \mathcal R_n \left( G \right) \\
x &  \longrightarrow   & x_n
\end{array}
\]
is a surjective $\ast$-homomorphism. 

Moreover, it is easy to see that $\rho_n$ is contained in $\rho_{n+1}$. Indeed, for all $v\in L_n$, let $C(v)$ be the set of \emph{children} of $v$, i.e. the set of vertices in $L_{n+1}$ whose distance to $v$ is 1. The group $\aut$ preserves the partition
\[
 L_{n+1}=\bigsqcup_{v\in L_n} C(v)
\]
and thus subspace
\[
 \mathrm{Sh}\left(n,n+1 \right) := \left\lbrace f\in \ell^2 \left( L_{n+1} \right) \ | \ f \textrm{ depends only on }C(v), \ v\in L_n \right\rbrace .
\]

Then, denoting by $\mathbf 1_{A} $ the characteristic function of a subset $A$, the map  
\[\begin{array}{ccc}
\ell^2(L_{n}) & \overset{Q_n}{\longrightarrow}  & \mathrm{Sh}\left(n,n+1 \right)\\
 \mathbf 1 _{ \{ z \} } &  \longrightarrow   &  \frac{1}{\sqrt{d}} \mathbf 1 _{C(z)}
\end{array}
\] 
is a $\aut$-equivariant unitary, i.e. it intertwines the representations $\rho_n$ and $\rho_{n+1}\mid_{\mathrm{Sh}\left(n,n+1 \right)}$.     

Therefore, we have surjective $\ast$-homorphisms 
\[
q_n : \mathcal R _n \left( G \right) \surj \mathcal R _{n-1} \left( G \right), \ n\in\N
\]
which make the following diagram commute : 

\begin{equation}\label{diag1}
\xymatrix{& & &\fond \ar@{->>}[dlll]_{\rho_0}\ar@{->>}[dll]^{\rho_1}\ar@{->>}[d]_{\rho_n}\ar@{-->>}[dr]\\ 
\C=\mathcal R _0 \left( G \right) & \mathcal R _1 \left(G \right) \ar@{->>}[l]^{q_1} & \dots\ar@{->>}[l] & \mathcal R _n \left(G\right) \ar@{->>}[l]^{q_n} & \dots\ar@{->>}[l] }
\end{equation}

The next proposition is a direct consequence of this diagram and the fact that $\left( \sum_{k=0}^{n}p_n \right)_{n\geq 0}$ is a sequence of projections converging strongly to the identity $Id$ in $ \mathcal{B}\left( \ell^2 \left( \tree \right) \right)$.

\begin{proper}\label{approxnorme}
For all $x\in \fond$, 
\[
\norm{x}=\sup_{n \in \N} \norm{x_n}=\lim_{n \to +\infty} \norm{x_n}.
\]
\end{proper}
 
In particular, $\fond$ is a residually finite dimensional $C^*$-algebra. 

\begin{remarque}
If for all $n \geq 1$ we remove $\rho_{n-1} $ to $\rho_n$, we get a representation $\tilde{\rho}$ weakly isomorphic to $\rho$. It is easy to see that $\tilde{\rho}$ is isomorphic to the unitary representation $\rho_{mes}$ that $\aut$ admits on $L^2 \left( \partial \tree \right)$ (see  \cite{mathese} \S 3.7). In particular, for every countable subgroup $G$ of $\aut$, the $C^*$-algebra $\fond$ is isomorphic to $C^*_{\rho_{mes}} \left( G \right) := \overline{\rho_{mes} \left( \mathbb{C}G \right) }$. 
\end{remarque}

Now, we consider:
\[
 \mathcal{S}\left(G \right) :=\left\lbrace \left(x_0,x_1,\dots\right) \ | \ x_n \in \mathcal R _n \left( G \right),\ q_{n+1}\left(x_{n+1}\right)=x_n \textrm{ and } \sup_n \norm{x_n}<\infty \right\rbrace 
\]

Each $x=\left(x_0,\dots,x_n,\dots\right)$ in $\mathcal S \left( G \right)$ acts on $\hil$ \emph{via} the operator 
\[
\tilde{x}=\sum_{n=0}^{+\infty} p_n x_n p_n, 
\]
where the convergence is understood for the strong topology. Here again, 
\[\norm{\tilde{x}}=\lim_{n \to +\infty} \norm{x_n}.\] 

Then, under this identification $\mathcal S \left( G \right) \subset \mathcal B \left(\hil\right)$, we have  
\begin{prop}
Let $G$ be a subgroup of $\aut$. Then, the von Neumann algebra generated by $\fond$ in $\mathcal{B}\left( \ell^2 \left( \tree \right) \right)$ is $\mathcal{S}\left( G \right)$.
\end{prop}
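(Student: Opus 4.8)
The plan is to prove the two inclusions $\mathcal{S}(G) \subseteq \overline{\fond}^{\,w}$ and $\overline{\fond}^{\,w} \subseteq \mathcal{S}(G)$ separately, where $\overline{\fond}^{\,w}$ denotes the von Neumann algebra generated by $\fond$. The key structural fact we will exploit throughout is the commuting diagram \eqref{diag1}: it identifies $\mathcal{S}(G)$ with the inverse limit $\varprojlim \mathcal{R}_n(G)$ in the category of $C^*$-algebras (with the norm being the supremum of the $\norm{x_n}$), and it shows that $\rho$ itself factors compatibly through each $\rho_n$. First I would observe that $\mathcal{S}(G)$, under the identification $x \mapsto \tilde x = \sum_n p_n x_n p_n$ with the strong convergence noted just above, is indeed a von Neumann subalgebra of $\mathcal{B}(\hil)$: it is a $*$-subalgebra, and it is closed in the weak (equivalently strong) operator topology because the projections $\sum_{k \le n} p_k$ lie in its commutant — more precisely, an operator commuting with all the $p_n$ is in $\mathcal S(G)$ iff each of its compressions $p_n(\cdot)p_n$ lies in $\mathcal{R}_n(G)$ and satisfies the coherence relations $q_{n+1}(x_{n+1}) = x_n$, and both the $p_n$-diagonality and these finitely-many linear coherence conditions are weakly closed (each $\mathcal R_n(G)$ is finite-dimensional, hence weakly closed in $\mathcal B(\ell^2(L_n))$).

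Next, for the inclusion $\fond \subseteq \mathcal{S}(G)$: every $x \in \fond$ has compressions $x_n = p_n x p_n \in \mathcal{R}_n(G)$ with $q_{n+1}(x_{n+1}) = x_n$ (this is precisely the commutativity of \eqref{diag1}) and $\sup_n \norm{x_n} = \norm{x} < \infty$ by Property \ref{approxnorme}; moreover $x = \tilde x = \sum_n p_n x_n p_n$ since $\sum_{k \le n} p_k \to Id$ strongly and $x$ commutes with the $p_n$ (as $\aut$, hence $G$, preserves each level $L_n$ and hence $\ell^2(L_n)$). So $\fond \subseteq \mathcal{S}(G)$, and since $\mathcal{S}(G)$ is a von Neumann algebra by the previous paragraph, we get $\overline{\fond}^{\,w} \subseteq \mathcal{S}(G)$.

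The substantial direction is $\mathcal{S}(G) \subseteq \overline{\fond}^{\,w}$: given a coherent bounded sequence $(x_0, x_1, \dots)$, we must produce $\tilde x = \sum_n p_n x_n p_n$ as a weak limit of operators from $\fond$. The idea is a diagonal/truncation argument: fix $N$ and lift $x_N \in \mathcal{R}_N(G) = \rho_N(\fond)$ to some $a_N \in \fond$ with $\rho_N(a_N) = x_N$ (surjectivity of $\rho_N$); then $p_n a_N p_n = q_{n+1} \cdots q_N(x_N)$ for $n \le N$, which equals $x_n$ by coherence, while for $n > N$ we only know $p_n a_N p_n$ is \emph{some} element of $\mathcal{R}_n(G)$ agreeing with $x_N$ down the tower. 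The main obstacle is that $a_N$ need not approximate $\tilde x$ on the higher levels $n > N$, so $a_N$ does not converge to $\tilde x$ in any obvious topology; I would handle this by cutting: consider $y_N := (\sum_{k\le N} p_k)\, a_N\, (\sum_{k \le N} p_k) \in \mathcal B(\hil)$, which agrees with $\tilde x$ on $\bigoplus_{k \le N}\ell^2(L_k)$ and is $0$ elsewhere, and note $y_N \to \tilde x$ strongly. The catch is that $y_N$ itself need not lie in $\fond$ (the finite-rank cutoff $\sum_{k\le N} p_k$ need not be in $\fond$), so instead I would argue directly that $\tilde x$ lies in the \emph{bicommutant} of $\fond$: show that any operator $T$ commuting with $\fond$ must commute with each $p_n$ (because $p_n \in \overline{\fond}^{\,w}$ — indeed $p_n$ is a strong limit of a suitable sequence in $\fond$, using that the $\mathcal{R}_n$ are finite-dimensional and that $\rho$ separates levels, a point I would need to verify carefully, perhaps via the remark on $\tilde\rho \cong \rho_{mes}$) and with each $x_n$, whence $T$ commutes with $\tilde x = \sum_n p_n x_n p_n$; then von Neumann's bicommutant theorem gives $\tilde x \in \overline{\fond}^{\,w}$. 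The delicate step — and the one I expect to be the real heart of the argument — is establishing $p_n \in \overline{\fond}^{\,w}$, i.e. that the level projections are weakly approximable from the fundamental algebra; this is where the specific structure of the tree action (as opposed to an abstract inverse limit) is genuinely used, and one likely needs that $\rho$ restricted to $\ell^2(L_n)$ is, up to the intertwiner $Q_n$, a subrepresentation of $\rho$ that is "detected" by $\fond$.
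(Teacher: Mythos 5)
Your first two steps (that $\mathcal S(G)$ is a weakly closed $*$-subalgebra of $\mathcal B(\hil)$ and that $\fond\subseteq\mathcal S(G)$, hence $\overline{\fond}^{\,w}\subseteq\mathcal S(G)$) essentially match the paper and are fine, modulo writing out the weak-closedness details. The problem is the hard direction. The bicommutant route you propose cannot work, because its key step --- $p_n\in\overline{\fond}^{\,w}$ --- is false. Indeed, $p_n$ corresponds to the sequence $(0,\dots,0,1,0,\dots)$ with $1$ in position $n$, which violates the coherence condition $q_{n+1}(x_{n+1})=x_n$ (one would need $q_{n+1}(0)=1$); so $p_n\notin\mathcal S(G)$, and since you have already shown $\overline{\fond}^{\,w}\subseteq\mathcal S(G)$, it follows that $p_n\notin\overline{\fond}^{\,w}$. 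Concretely, the $\aut$-equivariant isometry $Q_n:\ell^2(L_n)\to\mathrm{Sh}(n,n+1)$, extended by $0$ on the orthogonal complement of $\ell^2(L_n)$, lies in the commutant of $\fond$ but satisfies $p_nQ_n=0\neq Q_n=Q_np_n$, so $p_n$ is not in the bicommutant. The step you flag as ``delicate'' is therefore not merely delicate; it is wrong, and the second half of your argument collapses.

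The direct truncation argument you abandoned is exactly the paper's proof, and the obstacle you cite (``$a_N$ does not converge to $\tilde x$ in any obvious topology'') is not an obstacle: it converges in the weak operator topology, which is all that is needed. Choose the lifts $a_N\in\fond$ of $x_N$ with $\norm{a_N}=\norm{x_N}\le\sup_n\norm{x_n}$ (a $*$-homomorphism of $C^*$-algebras admits preimages of equal norm for elements of its image). Then $a_N$ and $\tilde x$ agree on $\bigoplus_{k\le N}\ell^2(L_k)$, both are block-diagonal with uniformly bounded blocks, so for any $\xi,\eta\in\hil$,
\[
\bigl|\langle(\tilde x-a_N)\xi\mid\eta\rangle\bigr|=\Bigl|\sum_{i>N}\langle(x_i-(a_N)_i)p_i\xi\mid p_i\eta\rangle\Bigr|\le 2\Bigl(\sup_n\norm{x_n}\Bigr)\norm{(1-P_N)\xi}\,\norm{(1-P_N)\eta},
\]
where $P_N=\sum_{k\le N}p_k$; this tends to $0$, so $a_N\to\tilde x$ weakly and $\tilde x\in\overline{\fond}^{\,w}$. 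The uniform bound on the lifts is the one idea you were missing.
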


\begin{proof}
$\mathcal S (G)$ is a unital, involutive subalgebra of $\mathcal B (\hil)$. Let us first show that it is weakly closed. Let $\left( x(k) \right)_{k\geq 0}$ be a sequence of elements in $\mathcal S (G)$ that converges weakly to $x \in \mathcal B (\hil)$ :
\[\forall \xi, \eta \in \hil, \ \left< x(k)(\xi) \vert \eta \right> \underset{k \to \infty}{\longrightarrow} \left< x(\xi) \vert \eta \right> .
\]

For all $n\neq p$ and $\xi_n \in \ell^2 (L_n), \ \xi_p \in \ell^2 (L_p)$, we have $\left< x(\xi_n) \vert \xi_p \right> =0$ since $x(k)$ preserves $\ell^2 (L_n)$. Thus, $x$ leaves these subspaces invariant; we denote by $x_i$ the operator that $x$ induces on $\ell^2 (L_i)$. For all $i\in\N$ and $\xi, \eta$ in $\ell^2 (L_i) $, one has 
\[
\left< x_i(\xi) \vert \eta \right> = \left< x(\xi) \vert \eta \right> = \lim_{k \to \infty}\left< x(k)(\xi) \vert \eta \right> = \lim_{k \to \infty}\left< x(k)_i(\xi) \vert \eta \right> 
\]
which shows that $x(k)_i$ converges weakly to $x_i$. Now, since the algebras $\mathcal R _i (G)$ have finite dimension, this convergence holds for the norm topology: letting $k\to +\infty$, one sees that $x_i $ belongs to $ \mathcal R _i (G)$ and that $Q_i (x_i)$ equals $x_{i-1}$. Therefore, $x=\left( x_0,\dots,x_i,\dots \right) \in \mathcal S (G)$. 

Let us show that $\fond$ is weakly dense in $\mathcal S (G)$. Let $x=\left( x_0,\dots,x_i,\dots \right) $ in $ \mathcal S (G) $. By construction, for each $n\in\N$, there exists $x(n)\in \fond$ such that for all $ i\leq n, \  x_i = x(n)_i$. This last equality is equivalent to $x_n$ equals $x(n)_n $. It is a standard fact that if $f$ is a $\ast$-homomorphism between two $C^*$-algebras, then any element in the image of $f$ admits a preimage of same norm. Thus, applying the last remark to $x(n) \mapsto x(n)_n$, one sees that it is possible to choose the $x(n)$\textquoteright s uniformly bounded ($\norm{x(n)}=\norm{x_n} \leq \norm{x}$). Then, $\forall \xi,\eta \in \hil,$
\begin{eqnarray*}
\lvert \left< (x-x(n))\xi \vert \eta \right>\rvert &=& \lvert\sum_{i>n}\left< (x_i-x(n)_i) p_i (\xi) \vert p_i (\eta) \right> \rvert\\
 & \leq & \underbrace{\left( \norm{x}+\sup_{n} \norm{x(n)}\right) \norm{(1-p_n)\xi} \norm{(1-p_n)\eta}}_{\underset{n \to \infty}{\longrightarrow}0}
 \end{eqnarray*}
which finishes the proof. 
\end{proof}

Concerning the spectrum of elements in $\mathcal S \left( G \right)$, one has the following result.

\begin{prop}\label{spectre}
\begin{enumerate}[i.]
 \item\label{spectrei} If $x\in \mathcal S \left( G \right)$ is normal, then 
\[
 \textrm{sp}(x)=\overline{\bigcup_{n=0}^{+\infty}\textrm{sp}(x_n)}.
\]
\item\label{spectreii} Suppose $G$ is finitely generated. Then, the previous equality holds for each element $x\in  \mathcal S \left( G \right)$ if and only if $G$ is virtually abelian. 
\end{enumerate}
\end{prop}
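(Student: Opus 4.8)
For part (\ref{spectrei}), the plan is to exploit that $\mathcal S(G)$ sits inside $\mathcal B(\hil)$ as the weakly closed algebra $\bigoplus_n \mathcal R_n(G)$ acting block-diagonally on $\hil = \bigoplus_n \ell^2(L_n)$, with $\sup_n\norm{x_n}<\infty$. For such a block-diagonal normal operator the inclusion $\overline{\bigcup_n \mathrm{sp}(x_n)}\subseteq \mathrm{sp}(x)$ is immediate since each $x_n$ is a compression of $x$ to a reducing subspace, and $\mathrm{sp}(x)$ is closed. For the reverse inclusion I would use the normality: if $\lambda\notin\overline{\bigcup_n\mathrm{sp}(x_n)}$, then $\mathrm{dist}(\lambda,\mathrm{sp}(x_n))\geq\varepsilon>0$ uniformly in $n$, and for a \emph{normal} operator $\norm{(x_n-\lambda)^{-1}}=\mathrm{dist}(\lambda,\mathrm{sp}(x_n))^{-1}\leq\varepsilon^{-1}$; assembling the block inverses $(x_n-\lambda)^{-1}$ gives a bounded operator inverting $x-\lambda$, so $\lambda\notin\mathrm{sp}(x)$. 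This is where normality is essential — the resolvent-norm/distance identity fails for non-normal matrices, so the block inverses need not be uniformly bounded.

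For part (\ref{spectreii}), the ``if'' direction: suppose $G$ is virtually abelian. The idea is that $\mathcal S(G)$ is then, up to the finite-index subgroup, commutative (or has bounded-dimensional irreducible representations), so every element of each $\mathcal R_n(G)$ is normal after a fixed change of basis, or more precisely every $x_n$ lies in an algebra of operators that are simultaneously similar to normal operators with a similarity bounded independently of $n$; one then reruns the argument of (\ref{spectrei}) after conjugating by the uniformly bounded similarity. Concretely, a finitely generated virtually abelian group has a finite bound $M$ on the dimensions of its irreducible representations, hence each $\mathcal R_n(G)$ embeds in $\Mat_M(\C)^{k_n}$ with the similarity to a direct sum of such blocks controlled uniformly; this uniform control is exactly what makes the resolvents $\norm{(x_n-\lambda)^{-1}}$ stay bounded when $\lambda$ is bounded away from $\bigcup_n\mathrm{sp}(x_n)$.

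For the ``only if'' direction, which I expect to be the main obstacle, the plan is contrapositive: if $G$ is finitely generated but not virtually abelian, produce an $x=(x_n)\in\mathcal S(G)$ for which $\mathrm{sp}(x)\supsetneq\overline{\bigcup_n\mathrm{sp}(x_n)}$. The mechanism is the well-known failure of spectral continuity for non-normal operators: one wants a sequence of finite matrices $x_n$ (pulled back from $\mathcal R_n(G)$, using surjectivity of $\rho_n$) that are pseudospectrally ``bad'' — e.g. large nilpotent-like Jordan blocks or weighted shifts whose $\varepsilon$-pseudospectra blow up — so that $\bigcup_n\mathrm{sp}(x_n)$ stays small (say $\{0\}$) while the norms $\norm{(x_n-\lambda)^{-1}}$ are unbounded in $n$ for $\lambda$ in a whole disc, forcing that disc into $\mathrm{sp}(x)$. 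The group-theoretic input is that a non-virtually-abelian finitely generated group has unbounded-dimension irreducible representations appearing in the $\mathcal R_n(G)$ (via spherical-type transitivity of the action on levels, or via a non-abelian free sub-semigroup / Tits-alternative-flavoured argument), giving enough room inside $\rho(\C G)$ to engineer such $x_n$. The delicate point is twofold: (i) one only controls $\mathcal R_n(G)=p_n\fond p_n$, not arbitrary matrix algebras, so the bad matrices must be realised by actual group elements restricted to $L_n$, and (ii) one must simultaneously keep $q_{n+1}(x_{n+1})=x_n$ so that $(x_n)$ is a genuine element of $\mathcal S(G)$ — this compatibility with the projective system is the real technical hurdle, and I would handle it by choosing the $x_n$ inductively, lifting a fixed ``bad'' element along the maps $q_{n}$ and padding, or by identifying a single $g\in\C G$ whose images $\rho_n(g)$ already have growing pseudospectra.
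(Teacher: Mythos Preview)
Your argument for part~(\ref{spectrei}) is correct and in fact slightly more direct than the paper's: the paper argues by continuous functional calculus (build a nonzero $f$ supported off $\overline{\bigcup_n\mathrm{sp}(x_n)}$ and use $\|f(x)\|=\lim_n\|f(x_n)\|$), whereas you invert block by block using the normal resolvent identity $\|(x_n-\lambda)^{-1}\|=\mathrm{dist}(\lambda,\mathrm{sp}(x_n))^{-1}$. Both are fine.

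For part~(\ref{spectreii}) there is a genuine gap in your ``only if'' direction, and it stems from missing a structural simplification that the paper exploits. Each $\mathcal R_n(G)$ is a finite-dimensional $C^*$-algebra, hence $\mathcal R_n(G)\cong\bigoplus_i M_{d_{n,i}}(\C)$ with the $d_{n,i}$ the dimensions of the irreducibles occurring in $\rho_n$. Because $\rho_n$ is a subrepresentation of $\rho_{n+1}$, the map $q_{n+1}$ is simply the projection onto those matrix factors that already appeared at level $n$. Consequently the inverse limit $\mathcal S(G)$ is an \emph{unrestricted} $\ell^\infty$-product
\[
\mathcal S(G)\ \cong\ {\textstyle\int^\oplus_{\N}} M_{d_k}(\C),
\]
where $(d_k)_k$ enumerates the irreducible dimensions, and $\bigcup_k\mathrm{sp}(a_k)=\bigcup_n\mathrm{sp}(x_n)$. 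Both of your ``delicate points'' dissolve: there is no compatibility constraint to enforce, and you are not obliged to realise anything by actual elements of $\C G$ --- you may place \emph{any} bounded sequence of matrices in the factors $M_{d_k}(\C)$. The paper then simply takes $a_k\in M_{d_k}(\C)$ to be the upper bidiagonal matrix with $1$'s on the diagonal and $-1$'s on the superdiagonal; then $\mathrm{sp}(a_k)=\{1\}$ for all $k$, $\|a_k\|\leq 2$, but $\|a_k^{-1}\|\geq\sqrt{d_k}\to\infty$, so $0\in\mathrm{sp}(x)\setminus\overline{\bigcup_n\mathrm{sp}(x_n)}$. Your fallback idea of finding a single element of $\C G$ whose level-restrictions have exploding pseudospectra is precisely what the paper says it does \emph{not} know how to do (see the remark following the proof), so that route would not close the gap.

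Two smaller points. In the ``if'' direction, the sentence about being ``simultaneously similar to normal operators'' is not right (a generic matrix in $M_d(\C)$ is not similar to a normal one); what you actually need is a uniform resolvent bound in bounded matrix size, and the paper gets this by the cofactor formula: for $a_k\in M_{d_k}(\C)$ with $d_k\leq d$ and $r=\mathrm{dist}(\lambda,\mathrm{sp}(a_k))$, one has $\|(a_k-\lambda)^{-1}\|\leq r^{-d}\sqrt{d}\,(d-1)!\,(\|x\|+|\lambda|)^{d-1}$. Finally, the equivalence ``$G$ virtually abelian $\Longleftrightarrow$ the $d_k$ are bounded'' is not a Tits-alternative statement; it is the content of the Appendix (Theorem~\ref{theoIJF}), whose key input is Jordan's theorem on finite subgroups of $U_N$, together with the faithfulness of $\bigoplus_n\rho_n$.
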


\begin{proof}
 \begin{enumerate}[i.]
  \item Suppose that $\textrm{sp}(x) \setminus \overline{\bigcup_{n=0}^{+\infty}\textrm{sp}(x_n)} \neq \emptyset.$ This non-empty set is open in $\textrm{sp}(x)$: one can choose a continous function $f$ defined on $\textrm{sp}(x)$ such that 
\begin{itemize}
 \item[-] $f\neq 0$,
\item[-] $f(z)=0, \ \forall z \in \overline{\bigcup_{n=0}^{+\infty}\textrm{sp}(x_n)}$.
\end{itemize}
 The first property implies that $f(x)$ is non-zero. The second implies that for all $n \in \N$, $0=f(x_n)=f(x)_n$, the last equality coming from the fact that the continous functional calculus commutes with continous $\ast$-homomorphisms. Since $\norm{f(x)}=\lim_{n \to +\infty} \norm{f(x)_n} $, we get a contradiction.  
\item First, let us make some remarks on $\mathcal S \left( G \right)$. The algebras $\mathcal R _n\left( G \right)$ have finite dimension. Thus, each of them is a sum of matrix algebras:
\[
 \forall n \in \N, \ \exists d_{n,1},d_{n,2},\dots,d_{n,j_n} , \ \mathcal R _n\left( G \right) = \bigoplus_{i=1}^{j_n} M_{d_{n,i}}\left( \C \right)
\]
where the $d_{n,i}$ are non-negative integers corresponding to the dimension of the irreducible representations contained in $\rho_n$. Then, the homomorphism $q_n$ correspond to a projection onto certain factors of this decomposition. Therefore, the von Neumann algebra $\mathcal S \left( G \right)$ is a $\ell^{\infty}$ sum of matrix algebras: 
\begin{align}\label{decompvn}
\mathcal S \left( G \right) &= \int^{\oplus}_{\N} M_{d_k} \left( \C \right)\\ 
&:= \left\lbrace  \left( a_0,a_1,\dots,a_k,\dots\right) \ | \ \exists M, \ \forall k, \ a_k \in M_{d_k} \left( \C \right) \textrm{ and } \norm{a_k}\leq M\right\rbrace. \nonumber
\end{align}
Moreover, even if $a_k \neq x_n$ in general (except $a_0=x_0$), one has
\[
 \bigcup_{k=0}^{+\infty}\textrm{sp}(a_k)=\bigcup_{n=0}^{+\infty}\textrm{sp}(x_n).
\]
Since the representation $\rho$ is faithful, Theorem \ref{theoIJF} in Appendix \ref{apen} implies that the integers $d_k$ are uniformly bounded if and only if $G$ is virtually abelian. 

We are now ready to prove \ref{spectre}.\ref{spectreii}. First, suppose that $G$ is virtually abelian and let $d=\max_{k\in \N} d_k$. Let $x\in \mathcal S \left(G\right)$. It is obvious that $\overline{\bigcup_{n=0}^{+\infty}\textrm{sp}(x_n)}$ is contained in $\textrm{sp}\left( x \right)$ so let us prove the other inclusion. We write the decomposition of $x$ given by \ref{decompvn} by 
\[
x=\left(a_0,a_1,\dots \right), \ a_k \in M_{d_k} \left( \C \right). 
\] 
Let $\lambda \notin \overline{\bigcup_{k=0}^{+\infty}\textrm{sp}(a_k)}$ and set $r:= \textrm{dist}\left( \lambda , \overline{\bigcup_{k=0}^{+\infty}\textrm{sp}(a_k)} \right) >0$. For all $k \in \N$, $a_k-\lambda$ is invertible; we want to prove that \[\left( \left(a_0-\lambda\right)^{-1}, \left( a_1-\lambda \right)^{-1},\dots \right) \in \mathcal S \left( G \right)\]  
which is equivalent to the $\left( a_k-\lambda \right)^{-1} $ \textquoteright s to be uniformly bounded. For each $k\in \N$ and $M\in M_{d_k} \left( \C \right) $, let 
\[
 \norminfty{M}=\sup_{1 \leq i,j \leq d_k} \left|M_{i,j}\right|.
\]
Then, $\norm{M}\leq \sqrt{d_k}\times \norminfty{M}\leq \sqrt{d_k}\times\norm{M}$. Denoting by $\textrm{Comat}\left(M\right)$ the cofactor matrix of a square matrix $M$, one has:

\begin{align*}
 \norm{\left( a_k-\lambda \right)^{-1}} &= \left| \det \left( a_k-\lambda \right) \right|^{-1} \times \norm{\textrm{Comat}\left( a_k-\lambda \right)^t} \\
                                        &\leq r^{-d}\times \sqrt{d} \times \norminfty{\textrm{Comat}\left( a_k-\lambda \right)} \\
                                        &\leq r^{-d}\times \sqrt{d} \times \left(d-1\right)!\times \norminfty{a_k-\lambda}^{d-1}  \\
                                        &\leq r^{-d}\times \sqrt{d} \times \left(d-1\right)!\times \left(\norm{x}+\left|\lambda \right| \right)^{d-1}.
\end{align*}

Let us now prove the inverse implication: suppose $G$ is not virtually abelian so that $\sup_{k\in \N} d_k = +\infty$. For all $k \in \N$, set 
\[
 a_k = \left( \begin{matrix}
  1 & -1 & 0 & \cdots\\
  0 & 1 & -1 & \\
  0 & 0 & 1 & \ddots\\
  \vdots  & \vdots  &   &\ddots
 \end{matrix}\right) \in M_{d_k} \left( \C \right) .
\]
It is easy to see that 
\[
 a_k^{-1} = \left( \begin{matrix}
  1 & 1 & 1 & \cdots\\
  0 & 1 & 1 & \cdots\\
  0 & 0 & 1 & \cdots\\
  \vdots  & \vdots  &   &\ddots
 \end{matrix}\right)
\]
so that $\norm{a_k} \leq 2$, $\norm{a_k^{-1}}\geq \sqrt{d_k}$. The first property implies that there exists an element $x\in \mathcal S \left( G \right)$ whose decomposition is given by $x=\left(a_0,a_1,\dots \right) $ and the second implies that $x$ is not invertible. However, $0\notin \overline{\bigcup_{n=0}^{+\infty}\textrm{sp}(x_n)} = \left\lbrace 1 \right\rbrace$.
 \end{enumerate}

\end{proof}
\begin{remarque}
It is clear that \ref{spectre}.\ref{spectrei} also holds for $x\in\fond$ (if $x\in\mathcal A \subset \mathcal B$, then $\textrm{sp}_{\mathcal A}\left(x\right) =\textrm{sp}_{\mathcal B}\left(x\right)$ for an inclusion of unital $C^*$-algebras $\mathcal A \subset \mathcal B$). However, the author does not know whether \ref{spectre}.\ref{spectreii} holds for $\fond$.  
\end{remarque}

\subsection{Extension of the recursion morphism}
Recall that $\tree$ is the Cayley graph of the semi-group generated by $X=\left\lbrace1,2,\dots,d \right\rbrace $. In this section, we assume that $G$ is a \emph{self-similar} subgroup of $\aut$, i.e.
\[
 \forall g \in G, \ \forall v\in \tree^0, \ \varphi_v \left( g\right) \in G.
\]

For each $g$ in $G$, we consider the following matrix $\Phi(g)$, of size $d \times d$ and whose entries belong to the group $G$:
\begin{equation}\label{recu}
\Phi(g)=\left(
\begin{matrix}
\varphi_1 (g)\\         
 &\varphi_2 (g)\\
 & & \ddots \\
 & & &\varphi_{d} (g)       
 \end{matrix} \right) \times M_{g_1} 
\end{equation}
where $M_{g_1}$ is the matrix of the permutation $g_1 \in \mathfrak{S}_d$ that $g$ induces on $X$. 

Now, let us consider the Hilbert subspace $(1-p_0)\hil$ of the functions in $\hil$ null at the root $\varnothing$ of $\tree$. This space is the sum indexed by $v\in L_1 = X $ of the subspaces $\ell^2 \left( v\tree^0 \right)$: 
\[
 (1-p_0)\hil=\bigoplus_{v \in L_1} \ell^2 \left( v\tree^0 \right)=\bigoplus_{v=1}^{d} \ell^2 \left( v\tree^0 \right)
\]
where $\ell^2 \left( v\tree^0 \right)$ is the space of functions supported on the subtree $v\tree$ rooted at the vertex $v$ of the first level $L_1$. First, the group $G$ preserves $(1-p_0)\hil$ and therefore, $\rho$ restricts to it. Moreover, $\ell^2 \left( v\tree^0 \right)$ is isometric to $\hil$ \emph{via} the isometric shift $\tree \underset{v^{-1}}{\overset{v}{\rightleftarrows}} v\tree$. Hence, there is a homorphism 
\[\fond \longrightarrow \mathcal B \left( \bigoplus_{v=1}^{d} \ell^2 \left( \tree^0 \right)\right)=M_{d}\left( \mathcal B\left(\hil\right) \right).\]  

Since $\mathcal R _1 \left( G \right) \surj \mathcal R _0 \left( G \right)$ (see diagram \ref{diag1}), this homomorphism is injective. It is also easy to see that it coincides with $\Phi$ on $G\subset \fond$. Summarizing everything, one has:   
\begin{proper}\label{properext}
The recursion homomorphism $\Phi$ gives rise to an isometric embedding 
\[\fond \overset{\Phi}{\longrightarrow} M_{d}\left( \fond \right)\] 
defined on $G\subset \fond$ by \ref{recu}. 
\end{proper}
 
\subsection{The trace $Tr$}
For all $n\in \N$, let $tr_n$ be the usual normalized trace on the finite dimensional $C^*$-algebra $\mathcal B \left( \ell^2 \left( L_n\right)\right)=M_{d^n}\left(\C\right)$. That is, $tr_n$ is defined for all $M\in M_{d^n}\left(\C\right)$ by:
\[
 tr_n \left( M \right) := \frac{1}{d^n} \sum_{i=1}^{d^n} M_{i,i}.
\]

We remark that for an element $g$ in $\aut$, $tr_n \left( g_n \right)$ measures the fixed point set of $g_n$ acting on $L_n$:
\[
 tr_n \left( g_n \right) = \frac{1}{d^n}  \left| \textrm{Fix}_{L_n}\left(g_n\right) \right| = \mu_n \left(\textrm{Fix}_{L_n}\left(g_n\right)\right)
\]
where $\mu_n$ is the uniform probability measure on the finite set $L_n$. It is easy to see that 

\begin{equation}
 tr_n (g_n) = \frac{1}{d}\left( \sum_{i \in \textrm{Fix}_{ L_1}(g)} tr_{n-1}(\varphi_i (g)_{n-1})\right).\label{cvb}
\end{equation}

In particular, for all $g\in \aut$, the sequence $\left(tr_n\left(g_n\right)\right)_{n}$ is a decreasing sequence of reals in $\left[ 0,1\right]$, hence its limit exists. More precisely,
\[
\lim_{n \to +\infty} tr_n \left( g_n \right)= \mu\left(\textrm{Fix}_{\partial \tree}\left( g\right) \right).
\]

\begin{defi}
 For all $g\in\aut$, we define $Tr\left(g\right)=\lim_{n \to +\infty} tr_n \left( g_n \right)= \mu\left(\textrm{Fix}_{\partial \tree}\left( g\right) \right)$. 
\end{defi}

\begin{proper}\label{proptrace}
 Let $G$ be a subgroup of $\aut$. Then, the map $Tr$ extends to a continous and normalized trace on $\fond$. If moreover $G$ is self-similar, then for all $x\in\fond$ and $n\in \N$, one has
\begin{equation}\label{fe}
 Tr\left( x\right)=\left(tr_n\otimes Tr\right)(\Phi^{(n)} \left(x \right)).
\end{equation}
 
\end{proper}
\begin{proof}
First extend $Tr$ by linearity to $\rho\left( \C G \right)$. Since the linear forms $tr_n$ defined on $\mathcal B \left(\ell^2\left( L_n\right) \right)$ all have norm equal to 1, Property \ref{approxnorme} implies that 
\[
 \sup_{x\in \rho\left( \C G\right), \norm{x}=1} Tr\left( x \right) \leq 1.
\]
Thus, the linear form $Tr$ can be extended by continuity to the Banach space $\fond$ in which $\rho\left( \C G\right)$ is dense. It is obvious that $Tr$ is normalized and \ref{fe} is obtained by letting $n$ go to $+\infty$ in \ref{cvb}. 
\end{proof}

\subsection*{Automata groups}

The trace we just defined is linked to the the fixed points set of elements in $G$. If the latter is generated by an automaton, one can describe this set more precisely. Recall first the following lemma proven in \cite{grigzukrev}:
\begin{lem}[Grigorchuk-\.{Z}uk]\label{grigzuk}
 Let $G$ be a subgroup of $\aut$ generated by an automaton. Let $g\in G$ be such that its fixed points set $\textrm{Fix}_{\partial \tree}\left(g\right)$ has an empty interior. Then, $\textrm{Fix}_{\partial \tree}\left(g\right)$ has measure 0 (equivalently $Tr\left(g\right)=0$). 
\end{lem}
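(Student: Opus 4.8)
The plan is to exploit the self-similar recursion structure encoded in equation \eqref{cvb}, together with the fact that $G$ is generated by a \emph{finite} automaton, so that only finitely many section maps $\varphi_v$ can occur. First I would set up the combinatorial framework: fix a finite generating set $S$ with $\varphi_v(g)\in S$ for all $g\in S$, $v\in L_1$ (so $S$ is closed under taking sections, hence under taking all sections $\varphi_w$, $w\in\tree^0$). For a fixed $g\in G$ with $\textrm{Fix}_{\partial\tree}(g)$ of empty interior, the key observation is that for a vertex $w\in L_n$, the value $tr$-contribution of the subtree $w\tree$ depends only on the section $\varphi_w(g)\in S$ and whether $g$ maps $w\tree$ into itself, i.e. whether $w\in\textrm{Fix}_{L_n}(g)$. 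Iterating \eqref{cvb}, one gets
\[
Tr(g)=\mu\bigl(\textrm{Fix}_{\partial\tree}(g)\bigr)=\lim_{n\to\infty}\frac{1}{d^n}\sum_{w\in\textrm{Fix}_{L_n}(g)}1,
\]
and I want to show this limit is $0$.

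The main step is a \emph{contraction / pigeonhole} argument. Because $S$ is finite, there are only finitely many possible sections. The hypothesis that $\textrm{Fix}_{\partial\tree}(g)$ has empty interior means: for every vertex $v$, the cylinder $\partial(v\tree)$ is \emph{not} contained in $\textrm{Fix}_{\partial\tree}(g)$, equivalently, $g$ does not fix $\partial(v\tree)$ pointwise, equivalently the section $\varphi_v(g)$ restricted to $v\tree$ is not the identity on $\partial(v\tree)$ — and this must propagate down. Concretely, I claim there is a uniform integer $N$ such that for every $h\in S$ with $\textrm{Fix}_{\partial\tree}(h)$ having empty interior and for every vertex $v$, there exists a descendant $w$ of $v$ at level $\textrm{L}(v)+N$ with $h(w\,\text{-translate})\neq w$, i.e. $w\notin\textrm{Fix}(h)$ as a section-vertex. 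Granting this, starting from the root and the finitely many sections that appear, I get a constant $c<1$ and integer $N$ with
\[
\bigl|\textrm{Fix}_{L_{n+N}}(g)\bigr|\le c\cdot d^{N}\cdot\bigl|\textrm{Fix}_{L_n}(g)\bigr|,
\]
whence $tr_{n}(g_n)\to 0$ geometrically, giving $Tr(g)=0$.

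The place where I expect the real work is justifying that uniform $N$: I need that "empty interior of $\textrm{Fix}_{\partial\tree}$" is a property that, via the finitely many sections in $S$, forces a fixed \emph{finite depth} at which every would-be fixed cylinder is broken. The danger is a section $h$ whose fixed-point set has empty interior but which needs arbitrarily deep vertices to witness non-fixing; this is exactly where finiteness of the automaton is essential — the orbit of $g$ under taking sections lies in the finite set $S$, so one can argue by contradiction: if no uniform $N$ worked, a König's-lemma / compactness argument would produce an infinite path along which some section fixes an entire subtree's boundary, contradicting empty interior (for $g$ itself, since having a fixed cylinder deep down pulls back to a fixed cylinder for $g$). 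I would also need the minor point that empty interior of $\textrm{Fix}_{\partial\tree}(g)$ passes to the relevant sections along non-fixed paths, which follows from the cocycle relation \eqref{equationsemigroupe} and the fact that a fixed cylinder for a section lifts to a fixed cylinder for $g$. Everything else — rewriting \eqref{cvb}, extracting the geometric decay, concluding $\mu(\textrm{Fix}_{\partial\tree}(g))=0$ and hence $Tr(g)=0$ — is routine.
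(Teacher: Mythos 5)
The paper does not actually prove this lemma: it quotes it from \cite{grigzukrev} and uses it as a black box, its own contribution being the generalization in Lemma \ref{lemtrace}. Your argument is essentially the standard Grigorchuk--\.{Z}uk proof, and it is the same mechanism the paper itself deploys in the proof of Lemma \ref{lemtrace}: finiteness of the set of sections of $g$ yields a uniform depth $N$ at which every fixed vertex acquires a non-fixed descendant, whence the contraction $tr_{n+N}(g_{n+N})\le\left(1-d^{-N}\right)tr_n(g_n)$ and $Tr(g)=0$. Two points in your write-up need tightening. First, the finite set to work with is not the generating set $S$ but the set of all sections $\varphi_w(g)$, $w\in\tree^0$, of the given $g$ (equivalently the ball $\left\lbrace h \ | \ \vert h\vert\le\vert g\vert\right\rbrace$, which contains them because $\vert\varphi_w(h)\vert\le\vert h\vert$): sections of a general $g\in G$ are words in $S$, not elements of $S$, and the lemma is asserted for arbitrary $g$, not only for generators. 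This is exactly the device the paper uses around display \eqref{zipp}. Second, the K\"onig's-lemma detour is unnecessary: once you observe that (i) the set of sections of $g$ is finite and closed under taking further sections, and (ii) for $v\in\textrm{Fix}(g)$ the section $\varphi_v(g)$ again has fixed-point set with empty interior --- because a fixed cylinder for $\varphi_v(g)$ lifts to a fixed cylinder for $g$ inside $v\tree$; note this lifting is along \emph{fixed} vertices, not the ``non-fixed paths'' you mention --- you can simply take $N$ to be the maximum, over this finite set, of the least level of a moved vertex. With those repairs your proof is complete and correct.
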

 
Here is a generalisation of this result:

\begin{lem}\label{lemtrace}
Let $G$ be a subgroup of $\aut$ generated by an automaton and $g\in G$. Then, there exists a partition 
\[
 \partial \tree = F\sqcup T \sqcup N
\]
of the boundary $\partial \tree$ such that: 
\begin{itemize}
 \item $F$ is open (i.e. is a union of boundary of subtrees) and for $\mu$-almost all $f\in F$, $g(f)\neq f$.
 \item $T$ is open and for all $t\in T$, $g(t)=t$.
 \item $N$ has measure 0. 
\end{itemize}
\end{lem}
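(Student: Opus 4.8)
The plan is to take the only reasonable candidate partition and reduce the whole lemma to a single measure-theoretic assertion. Since $g$ acts on $\partial \tree$ by a homeomorphism, $\textrm{Fix}_{\partial \tree}(g)$ is closed, and the sets $\partial(v\tree)$ form a basis of clopen sets; so I would set
\[
T := \textrm{int}\,\textrm{Fix}_{\partial \tree}(g), \qquad F := \partial \tree \setminus \textrm{Fix}_{\partial \tree}(g), \qquad N := \textrm{Fix}_{\partial \tree}(g)\setminus T .
\]
Then $F$ and $T$ are open (hence unions of sets $\partial(v\tree)$), $F \sqcup T \sqcup N = \partial \tree$, every point of $F$ is moved by $g$, every point of $T$ is fixed by $g$, and $N$ is exactly the topological boundary of $\textrm{Fix}_{\partial \tree}(g)$. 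Thus the lemma is equivalent to the single claim $\mu(N)=0$: the topological boundary of the fixed-point set of an element of an automaton group is $\mu$-negligible. (This contains Lemma \ref{grigzuk} as the case $T=\varnothing$; I would not try to deduce it from that lemma, since $N$ is not visibly the fixed set of another group element, and would argue directly instead. If $g=1$ everything is trivial with $T=\partial \tree$, so assume $g\neq 1$.)

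Next I would extract the finite combinatorial data furnished by the automaton. Because $G$ is generated by a finite automaton, the set of sections $Q := \{\,\varphi_v(g)\ |\ v\in\tree^0\,\}$ is a \emph{finite} subset of $\aut$: writing $g$ as a product of $k$ generators and using the cocycle identity for sections, each $\varphi_v(g)$ is a product of at most $k$ elements of the finite generating set. Moreover $g\in Q$ and $Q$ is stable under first-level sections, since $\varphi_i(\varphi_v(g))=\varphi_{vi}(g)$. Using the relation \eqref{equationsemigroupe}, for every $h\in Q$ the set $\textrm{Fix}_{\partial \tree}(h)$ decomposes, along the first level, as the disjoint union over $i\in\textrm{Fix}_{L_1}(h)$ of the images of $\textrm{Fix}_{\partial \tree}(\varphi_i(h))$ under the canonical homeomorphism $\partial \tree \cong \partial(i\tree)$. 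The pieces of this union contained in $\textrm{int}\,\textrm{Fix}_{\partial \tree}(h)$ are exactly the $\partial(i\tree)$ with $h(i)=i$ and $\varphi_i(h)=1$, while for the remaining $i$ the interior restricts inside $\partial(i\tree)$ to (the image of) $\textrm{int}\,\textrm{Fix}_{\partial \tree}(\varphi_i(h))$. Subtracting interiors from fixed sets (compare the computation behind \eqref{cvb}), the numbers $n(h):=\mu\bigl(\textrm{Fix}_{\partial \tree}(h)\bigr)-\mu\bigl(\textrm{int}\,\textrm{Fix}_{\partial \tree}(h)\bigr)$ satisfy, for every $h\in Q$,
\[
n(h)=\frac1d\sum_{\substack{i\in X:\ h(i)=i,\\ \varphi_i(h)\neq 1}} n\bigl(\varphi_i(h)\bigr).
\]
Equivalently, writing $Q':=Q\setminus\{1\}$ and letting $A$ be the adjacency matrix of the finite directed graph on $Q'$ with an edge $h\to\varphi_i(h)$ for each letter $i$ fixed by $h$ with $\varphi_i(h)\neq 1$, the nonnegative vector $(n(h))_{h\in Q'}$ satisfies $d\,(n(h))_{h\in Q'}=A\,(n(h))_{h\in Q'}$, and every row sum of $A$ is at most $d$.

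Finally I would run an extremal (Perron--Frobenius-type) argument to conclude that this vector vanishes. Suppose not, and pick $h_0\in Q'$ with $n(h_0)=m:=\max_{h\in Q'}n(h)>0$. Then
\[
dm=\sum_{h'\in Q'}A_{h_0h'}\,n(h')\le\Bigl(\sum_{h'\in Q'}A_{h_0h'}\Bigr)m\le dm,
\]
so all inequalities are equalities: $h_0$ fixes every letter of $X$, every section $\varphi_i(h_0)$ is $\neq 1$, and each such section again attains the maximum $m$. Since $Q'$ is finite and the set $\{h\in Q': n(h)=m\}$ is stable under passing to these sections, iterating shows that $h_0$ fixes $L_1$ pointwise and so do all its iterated first-level sections; by induction (via \eqref{equationsemigroupe}, $h_0(ij)=h_0(i)\,\varphi_i(h_0)(j)$, etc.) $h_0$ fixes every level $L_n$, hence fixes $\partial \tree$ pointwise, hence $h_0=1$ by faithfulness of the action of $\aut$ on $\partial \tree$ — contradicting $h_0\in Q'$. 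Therefore $n(h)=0$ for all $h\in Q'$, and in particular $\mu(N)=n(g)=0$, which by the first paragraph finishes the proof.

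The step I expect to demand the most care is the middle one: deriving finiteness of $Q$ cleanly from the automaton hypothesis, and verifying that passing to interiors and then to the quantities $n(h)$ really produces the displayed finite linear recursion over $Q$ — in particular correctly accounting for the identity section, which is exactly what makes $T$ nonempty in general and what forces us to delete the vertex $1$ from the graph. Once that recursion is in hand, the maximum argument and the topological reduction of the first paragraph are short and essentially formal.
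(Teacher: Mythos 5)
Your argument is correct, but it is organized quite differently from the paper's proof. The paper also takes $T=\textrm{int}\,\textrm{Fix}_{\partial\tree}(g)$ (its $\mathscr T$ is exactly the set of vertices whose subtree-boundary lies in the fixed set), but its $F$ is the union of the $\partial(v\tree)$ below which \emph{no} subtree is entirely fixed, i.e.\ the complement of $\overline{T}$; this $F$ may still contain fixed points, and the ``almost free on $F$'' clause is obtained by applying Lemma \ref{grigzuk} to each section $\varphi_f(g)$ with $f\in\mathfrak F\cap\textrm{Fix}(g)$. The remaining null set $N=\overline T\setminus T$ is then handled by a density argument: the proportion $v_n$ of vertices of $L_n$ lying under a ``decided'' vertex (one in $\mathscr T\cup\mathscr F$) satisfies $v_{kn}\ge v_{k(n-1)}+d^{-k}\left(1-v_{k(n-1)}\right)$ for a fixed $k$ furnished by the finiteness of the ball of radius $\vert g\vert$, which forces $v_n\to 1$. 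You instead put every fixed point outside $\textrm{int}\,\textrm{Fix}_{\partial\tree}(g)$ into $N$, reduce the whole lemma to $\mu\left(\textrm{Fix}_{\partial\tree}(g)\setminus\textrm{int}\,\textrm{Fix}_{\partial\tree}(g)\right)=0$, and prove that via the linear recursion $d\,n(h)=\sum n(\varphi_i(h))$ over the finite set of sections together with a maximum-principle argument. The substantive input from the automaton hypothesis is the same in both proofs (finiteness of the set of sections of $g$), but your version is self-contained: it does not invoke Grigorchuk--\.Zuk and in fact reproves it as the special case $\textrm{int}\,\textrm{Fix}=\emptyset$, and it yields the slightly cleaner partition in which $g$ moves \emph{every} point of $F$. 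The only points to spell out carefully are the closure of $Q$ under sections and the paper's indexing convention $\varphi_{g(v)}(g):v\tree\to g(v)\tree$, neither of which affects the validity of your recursion or of the equality case in the estimate $dm\le dm$.
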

\begin{proof}
Let us first define $F$ and $T$. We set 
\[
 \mathscr{T}:=\left\lbrace t\in \tree^0 \ | \  t\tree \subseteq \textrm{Fix}(g) \right\rbrace \textrm{ and } \mathscr{F}:=\left\lbrace f \in \tree^0 \ | \ \forall z \in f\tree, \ z\tree \nsubseteq \textrm{Fix}(g) \right\rbrace
\]

These two sets are union of subtrees. Let us define $\mathfrak T$ and $\mathfrak F$ by the following property:
\begin{align}
 \mathscr{T}=\bigsqcup_{t\in\mathfrak{T}} t\tree & \textrm{ and } T=\bigsqcup_{t\in\mathfrak{T}} \partial \left(t\tree\right) \nonumber\\
 \mathscr{F}=\bigsqcup_{f\in\mathfrak{F}} f\tree & \textrm{ and } F=\bigsqcup_{f\in\mathfrak{F}} \partial \left(f\tree\right) \nonumber
\end{align}

It is clear that $F$ and $T$ are open, that their intersection is empty and that for all $t\in T$, $g(t)=t$. 

Let us prove that the action of $g$ on $F$ is almost free:
\begin{align}
 \mu\left(\textrm{Fix}_{\partial \tree} (g) \cap F\right) &= \mu\left(\textrm{Fix}_{\partial \tree} (g) \cap \bigsqcup_{f \in \mathfrak F} \partial \left( f\tree \right) \right) \nonumber\\
&= \sum_{f\in \mathfrak F \cap \textrm{Fix}(g)} \mu \left(\textrm{Fix}_{\partial \tree} (g) \cap \partial \left( f\tree \right) \right)\nonumber\\
&=\sum_{f\in \mathfrak F \cap \textrm{Fix}(g)} \frac{\mu \left(\textrm{Fix}_{\partial \tree} (\varphi_{f} (g))  \right)}{d^{\textrm{L}(f)}}  \nonumber
\end{align}
where $\textrm{L}(f)$ still denotes the level of the vertex $f$. Now, each term in this last sum is 0 thanks to \ref{grigzuk}. Indeed, by construction, $g$ fixes no subtree included in $\mathscr T$. The same holds \emph{a fortiori} for $\varphi_{f} (g)$ if $f\in \mathfrak F \cap \textrm{Fix}(g)$, and this is equivalent to $\textrm{Fix}_{\partial \tree} (\varphi_{f} (g)) $ having empty interior. 

We will now prove that $\mu\left(T\right)+\mu\left(F\right)=1$, which will finish the proof. Let us define for all $n\in \N$,
\[
 u_n= \vert L_n \cap \left(\mathscr{T} \cup \mathscr{F}\right)\vert .
\]

We want to prove that $v_n:=\frac{u_n}{d^n}\underset{n \to \infty}{\longrightarrow}1 $. 

\emph{Fact 1}: The sequence $\left(v_n\right)_n$ converges to a real $l\in \left[0,1\right]$. Indeed, it is an increasing sequence ($\mathscr{T}$ and $ \mathscr{F}$ are both union of subtrees) of reals in  $\left[0,1\right]$.

\emph{Fact 2}: There exists $k\in \N$ such that for all $n\in\N$,
\begin{eqnarray}
v_{kn}\geq v_{k(n-1)}+\frac{1}{d^k}\left(1-v_{k(n-1)}\right) . \label{zip}
\end{eqnarray}

To see this, let us denote by $\vert .\vert$ the word length on $G$ relative to the finite generating set which defines $G$ as an automaton group. In particular, for all $w\in \tree^0, \ \vert \varphi_w (g)\vert \leq \vert g\vert$. Then, let $k \in \N$ be such that for all $ h\in G$ with $ \vert h \vert \leq \vert g\vert $, the following holds: 
\begin{equation}\label{zipp}
\exists w \in \tree^0 \textrm{ s.t. } w\tree \subset \textrm{Fix}(h) \Longrightarrow \exists w \in \tree^0 \textrm{ s.t. } w\tree \subset \textrm{Fix}(h) \textrm{ and } \textrm{L}(w) \leq k .
 \end{equation}
 
Such a positive integer exists because the set $\left\lbrace h \ | \  \vert h\vert \leq \vert g\vert \right\rbrace $ is finite.
 
Let $N\in \N$ and $w\in L_n$ which is not in $\mathscr{T} \cup \mathscr{F} $. This means that $g(w)=w, \ \varphi_w (g) \neq 1$ and there exists $w' \in w\tree $ such that $w'\tree \subset \textrm{Fix}(g)$ (i.e. $w' \in \mathscr{T}$). According to \ref{zipp} (applied to $\varphi_w (g)$), $w'$ even exists with the additional property of having a level smaller than $N+k$. Therefore, one gets the following inequality:
\[u_{N+k} \geq \underbrace{d^k u_N}_{\textrm{ contribution of } L_N\cap \left(\mathscr{T} \cup \mathscr{F}\right)} + \underbrace{d^N -u_N}_{\textrm{ contribution of } L_N \setminus \left(\mathscr{T} \cup \mathscr{F}\right)} .
\]  

If $N:=k(n-1)$, then
\[u_{nk} \geq d^k u_{k(n-1)} + d^{k(n-1)}-u_{k(n-1)}
\]
which, dividing by $d^{kn}$, yields \ref{zip}.

Now, let $n$ tend to $\infty$ in \ref{zip} to obtain
\[\frac{1}{d^k}(1-l)\leq 0
\]
which implies that $l=1$.
\end{proof}

From this follows the unicity of $Tr$:

\begin{cor}\label{cortrace}
If $G$ is a subgroup of $\aut$ generated by an automaton, then $Tr$ is the unique continous normalized trace $\tau$ on $\fond$ for which 
\[
 \forall x\in\fond, \ \forall n\in\N, \ \tau\left( x\right)=\left(tr_n\otimes\tau\right)(\Phi^{(n)} \left(x \right)).
\]
\end{cor}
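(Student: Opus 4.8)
The plan is to establish uniqueness, since existence is already contained in Property~\ref{proptrace} (an automaton group is self-similar, so $\Phi^{(n)}\colon\fond\to M_{d^n}(\fond)$ and equation~\ref{fe} are available). Let $\tau$ be a continuous normalized trace on $\fond$ with $\tau(x)=(tr_n\otimes\tau)(\Phi^{(n)}(x))$ for all $x\in\fond$ and $n\in\N$. Since $\rho(\C G)$ is norm-dense in $\fond$ and both $\tau$ and $Tr$ are continuous, it suffices to prove $\tau(g)=Tr(g)$ for every $g\in G$. First I would substitute $x=\rho(g)$ into the functional equations for $\tau$ and for $Tr$, and use that the $(w,w)$-block of the matrix $\Phi^{(n)}(g)\in M_{d^n}(\fond)$ is nonzero exactly when $g(w)=w$, in which case it equals $\rho(\varphi_w(g))$; hence $(tr_n\otimes\tau)(\Phi^{(n)}(g))=\frac1{d^n}\sum_{g(w)=w}\tau(\varphi_w(g))$ and similarly for $Tr$. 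Writing $\delta(h):=\lvert\tau(h)-Tr(h)\rvert$ and subtracting, one gets the recursive estimate
\[
\delta(g)\ \le\ \frac1{d^n}\sum_{\substack{w\in L_n\\ g(w)=w}}\delta\bigl(\varphi_w(g)\bigr)\qquad\text{for every }n\in\N .
\]

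Next I would bring in the automaton hypothesis. For the word length $\lvert\cdot\rvert$ relative to the defining generating set one has $\lvert\varphi_w(g)\rvert\le\lvert g\rvert$ for all $w\in\tree^0$ (the fact already used in the proof of Lemma~\ref{lemtrace}), and each ball $\{h\in G:\lvert h\rvert\le n\}$ is finite, so $D_n:=\max\{\delta(h):\lvert h\rvert\le n\}$ exists and is attained. Bounding each $\delta(\varphi_w(g))$ above by $D_{\lvert g\rvert}$ in the estimate and recognising $\frac1{d^n}\lvert\textrm{Fix}_{L_n}(g)\rvert=tr_n(g_n)$, I obtain $\delta(g)\le tr_n(g_n)\,D_{\lvert g\rvert}$ for all $n$; letting $n\to\infty$ and using that $tr_n(g_n)$ decreases to $Tr(g)$ gives
\[
\delta(g)\ \le\ Tr(g)\,D_{\lvert g\rvert}\qquad\text{for all }g\in G .
\]

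The final step is a rigidity argument closing the recursion. Fix $n$ and choose $h_0$ with $\lvert h_0\rvert\le n$ realising $D_n=\delta(h_0)$; the last inequality yields $D_n\le Tr(h_0)\,D_n$, so either $D_n=0$ or $Tr(h_0)=1$, i.e. $\mu(\textrm{Fix}_{\partial\tree}(h_0))=1$. In the latter case $\textrm{Fix}_{\partial\tree}(h_0)$ is closed and of full $\mu$-measure; since every nonempty open subset of $\partial\tree$ contains some $\partial(v\tree)$ and hence has positive measure, such a set must be all of $\partial\tree$, so $h_0$ fixes $\partial\tree$ pointwise and $h_0=1$ by faithfulness of the action of $\aut$ on $\partial\tree$. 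Then $\delta(h_0)=0$, contradicting $D_n>0$; hence $D_n=0$ for every $n$, so $\tau(g)=Tr(g)$ on $G$, and $\tau=Tr$ on $\fond$ by density and continuity.

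I expect the closure step to be the main obstacle. Iterating the recursive estimate $m$ extra levels merely reproduces it at level $n+m$ (via the cocycle identity $\varphi_{ww'}(g)=\varphi_{w'}(\varphi_w(g))$ on the fixed-point indices that occur), so no purely iterative argument suffices; the key is to combine the finiteness afforded by the automaton structure — which makes $D_n$ an \emph{attained} maximum — with the rigidity input $Tr(h_0)=1\Rightarrow h_0=1$. Conceptually this is the role of Lemma~\ref{lemtrace} and Lemma~\ref{grigzuk} (they show that the genuinely non-trivial sections $\varphi_w(g)$ eventually carry negligible weight), but a direct stratification of $L_n$ along that trichotomy would still leave the contribution of the vertices $w\in\mathscr F$ (notation as in the proof of Lemma~\ref{lemtrace}), namely $\sum_w\tau(\varphi_w(g))$, to be estimated, which the bound $\delta(g)\le Tr(g)\,D_{\lvert g\rvert}$ handles more economically.
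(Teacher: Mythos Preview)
Your proof is correct and takes a genuinely different route from the paper's. The paper invokes Lemma~\ref{lemtrace} to obtain the two-limit identity $\lim_n \alpha_n/d^n = \lim_n \beta_n/d^n = Tr(g)$, where $\alpha_n$ counts the fixed vertices of $g$ on $L_n$ and $\beta_n$ counts those whose section $\varphi_w(g)$ is trivial; the contribution of the nontrivial sections to $(tr_n\otimes\tau)(\Phi^{(n)}(g))$ is then bounded by $M(\alpha_n-\beta_n)/d^n\to 0$, while the $\beta_n$ trivial terms contribute $\tau(1)\,\beta_n/d^n\to Tr(g)$. You bypass Lemma~\ref{lemtrace} entirely with a maximum-principle argument: since $\delta$ attains its maximum $D_n$ on the finite ball of radius $n$, the inequality $\delta(g)\le Tr(g)\,D_{|g|}$ applied at a maximizer $h_0$ forces $Tr(h_0)=1$, hence $h_0=1$ by full support of $\mu$ and faithfulness. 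Your argument is more self-contained for this particular corollary---it needs only the elementary implication $Tr(h)=1\Rightarrow h=1$, not Lemma~\ref{grigzuk} or Lemma~\ref{lemtrace}---whereas the paper's approach makes the mechanism behind uniqueness explicit (almost all fixed sections are eventually trivial) and reuses a lemma established for its own sake.
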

\begin{proof}
Let $\tau$ be such a trace. It is sufficient to prove that for each $g\in G$, $\tau\left(g\right)=Tr\left(g\right)$ since $\tau$ is continous and $\rho\left( \C G \right)$ is dense in $\fond$. Let $g\in G$. Let $\alpha_n$ (respectively $\beta_n$) be the number of non-zero elements (respectively of 1\textquoteright s) in the diagonal of $\Phi^{(n)}(g)$. Lemma \ref{lemtrace} says that
\begin{equation}\label{truc}
 Tr(g)=\lim_{n\to +\infty} \frac{\alpha_n}{d^n}=\lim_{n\to +\infty} \frac{\beta_n}{d^n}.
\end{equation}
(the first equality is just the definition of $Tr$). Moreover, the diagonal entries of $\Phi^{(n)}(g)$ are either 0 or elements in $G$. In particular, $\tau$ being continous, there exists $M>0$ such that for each $x$ in the diagonal of $\Phi^{(n)}(g)$, $\left|\tau(x)\right|\leq M$. Therefore,
\begin{align}
 \left| \tau\left(g\right) - Tr(g)\right|&=\left| \tau\left(\Phi^{(n)}(g)\right) - Tr(g)\right|\nonumber\\
                                             &\leq M\frac{\alpha_n-\beta_n}{d^n}+\left| \tau(1)\frac{\beta_n}{d^n}-Tr(g)\right| \nonumber
\end{align}

Since $\tau$ is assumed normalized (i.e. $\tau(1)=1$), \ref{truc} implies that the left hand side tends to 0 as $n$ goes to $+\infty$.
\end{proof}

\section{Link with the reduced $C^*$-algebra}

\subsection{Generalities}
For a discrete group $G$, we denote by $\lambda$ the regular representation of $G$, i.e. 
\[
 \lambda := G \longrightarrow \mathcal{U}\left(\ell^2 \left(G\right)\right).
\]

In \cite{mathese,Moi11}, the following general problem is studied: if $G$ is a group acting on a rooted tree $\mathcal T$, when does the induced representation $\rho$ on $\ell^2 \left( \mathcal T\right)$ weakly contain the regular representation $\lambda$? The same relation can be investigated for the representations $\rho^{\otimes n}$ coming from the diagonal action of $G$ on the product $\mathcal T \times \dots\times \mathcal T$ of $n$ copies of the rooted tree. It turns out the answer is related to the size of the stabilizers of subtrees $\St_G \left(v\mathcal T\right):= \left\lbrace g\in G \ | \ g(x)=x, \ \forall x\in v\tree \right\rbrace$.

More precisely, the following results are proved:

\begin{theo}[\cite{Moi11}]\label{theosuffisant} Let $G$ be a countable group acting faithfully on a rooted tree $\mathcal T$. 
\begin{enumerate}[i.]
 \item\label{theosuffisant1} If for every vertex $v\in \mathcal T^0$, the stabilizer $\St_G (v\mathcal T)$ is trivial, then $\lambda \prec \rho$.
 \item \label{theosuffisant2}If the set $\bigcup_{v\in \mathcal T^0} \St_G (v\mathcal T)$ is finite and has cardinality $n$, then $\lambda \prec \rho^{\otimes n}$. 
 \item \label{theosuffisant3}One always has  $\lambda \prec \bigoplus_{n=1}^{+\infty}\rho^{\otimes n} $.
\end{enumerate}
\end{theo}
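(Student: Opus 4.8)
The plan is to reduce everything to a statement about how the representations $\rho^{\otimes n}$ decompose with respect to the stabilizers $\St_G(v\mathcal T)$, and then invoke a Følner-type / weak-containment criterion. Recall that for a unitary representation $\pi$ of a countable group $G$, one has $\lambda \prec \pi$ as soon as $\pi$ (or rather its associated state space) contains a sequence of unit vectors whose matrix coefficients converge pointwise to the coefficient of $\lambda$ at the identity, i.e. $\langle \pi(g)\xi_k \mid \xi_k\rangle \to \delta_{g,e}$; equivalently it suffices to find positive-definite functions $\varphi_k$ associated to $\pi$ with $\varphi_k \to \delta_e$ pointwise. So in each of the three items I want to manufacture such a sequence.

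For item \ref{theosuffisant1}: the natural candidates are the normalized characteristic functions $\xi_v = \mathbf 1_{\{v\}} \in \ell^2(\mathcal T^0)$, or rather, to get good decay, I would take $\xi_v$ to range over a level $L_n$ and average, or simply take $\xi = \mathbf 1_{\{v\}}$ for $v$ deep in the tree. The coefficient $\langle \rho(g)\xi_v \mid \xi_v\rangle$ equals $1$ if $g$ fixes $v$ and $0$ otherwise. Since $\St_G(v\mathcal T)$ is trivial, as $v$ runs to infinity along a ray, the set of $g$ fixing $v$ shrinks; but a single fixed vertex need not force $g=e$. The fix is to use that $\rho$ restricted to $\ell^2(v\mathcal T^0)$ is, via the canonical identification, equivalent to the representation of $\St_G(v)$ (the vertex stabilizer, not the subtree stabilizer) on its own fundamental module — and triviality of all subtree stabilizers is exactly what lets one iterate: I would build a sequence $\xi_k$ supported on nested subtrees and show the coefficients converge to $\delta_e$ using that $\bigcap_n \St_G(v_n) $ over a ray equals $\bigcap$ of subtree stabilizers along that ray, hence is trivial, together with a diagonal/exhaustion argument over the countably many group elements. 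This is essentially the content of \cite{Moi11}, so here I can simply cite it.

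For item \ref{theosuffisant2}: write $N := \bigcup_{v} \St_G(v\mathcal T)$, a finite set of cardinality $n$, containing $e$. The point is that on $\mathcal T^0$ the action of $G$ is "free up to $N$": for deep enough $v$, if $g$ fixes $v\mathcal T$ then $g\in N$. Consider the diagonal action on $(\mathcal T^0)^n$ and the vector $\xi = \mathbf 1_{(v_1,\dots,v_n)}$ where the $v_i$ are chosen so that $\St_G(v_i\mathcal T)$ together exhaust $N\setminus\{e\}$ in a "separating" way — more precisely, for each non-identity $h\in N$ pick a vertex $v_i$ with $h\notin \St_G(v_i\mathcal T)$, i.e. $h$ moves some point of $v_i\mathcal T$; such a $v_i$ exists for every $h\ne e$ because the action is faithful. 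Then $g$ fixes $(v_1,\dots,v_n)$ pointwise iff $g$ fixes each $v_i$, and pushing the $v_i$ deeper one arranges that this forces $g\in N$ and then that $g$ moves a point in one of the chosen subtrees unless $g=e$. Averaging over a suitable orbit / level and taking the $v_i$ deeper yields a sequence of coefficients of $\rho^{\otimes n}$ tending to $\delta_e$. The main subtlety — and what I expect to be the genuine obstacle — is bookkeeping the interaction between needing the $v_i$ deep (to kill stabilizers down to $N$) and needing them spread across different branches (to make $N$ act non-trivially on the tuple), while simultaneously getting honest $\ell^2$-decay of coefficients rather than mere pointwise bounds; this is where a Følner-sequence-in-the-orbit argument, or the explicit estimate $\langle \rho^{\otimes n}(g)\xi\mid\xi\rangle = \prod_i \mathbf 1_{g(v_i)=v_i}$ combined with a counting argument on levels, has to be done carefully.

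For item \ref{theosuffisant3}: this is a formal consequence of the same circle of ideas and does not need finiteness. Here I would use the full countable union $\bigcup_v \St_G(v\mathcal T)$, enumerate it (or enumerate $G$), and for each $k$ produce a vector in some $\rho^{\otimes m(k)}$ whose coefficient vanishes on $g_1,\dots,g_k$ except at $e$ and is bounded elsewhere — exactly as in item \ref{theosuffisant2} but with $n$ replaced by $m(k)\to\infty$. Since $\bigoplus_{m\geq 1}\rho^{\otimes m}$ contains all the $\rho^{\otimes m(k)}$, the resulting sequence of coefficients (living in that direct sum, suitably normalized) tends pointwise to $\delta_e$, giving $\lambda \prec \bigoplus_{m\ge 1}\rho^{\otimes m}$. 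One should note the hypothesis "faithfully" is used precisely to guarantee that for every $g\ne e$ there is some vertex $v$ with $g\notin\St_G(v\mathcal T)$, which is what makes all three constructions possible; and I would remark that items \ref{theosuffisant1} and \ref{theosuffisant2} are the cases $n=0$ (empty union, interpreting $\rho^{\otimes 0}$ issues away) and $n=|N|$ of a single mechanism.
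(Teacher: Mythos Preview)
The paper does not give a proof of this theorem: it is quoted from \cite{Moi11} and used as a black box. So there is no ``paper's own proof'' to compare your sketch against. The nearest thing in the present paper is that, inside the proof of Lemma \ref{lemfonal}.\ref{lemfonalii}, the author quotes \cite[Proposition 3.5]{Moi11}: if for every finite $F\subset G\setminus\{1\}$ there exists $x\in X$ with $\St_G(x)\cap F=\emptyset$, then $\lambda\prec\rho$. Your plan of producing unit vectors whose matrix coefficients converge pointwise to $\delta_e$ is exactly this criterion rephrased, so in spirit you are aligned with the method of \cite{Moi11}.

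That said, your outline has a recurring slip worth flagging: you repeatedly conflate the vertex stabilizer $\St_G(v)$ with the subtree stabilizer $\St_G(v\mathcal T)$. For the Dirac vector $\xi_v=\mathbf 1_{\{v\}}$ one has $\langle\rho(g)\xi_v\mid\xi_v\rangle=\mathbf 1_{g(v)=v}$, which detects the \emph{vertex} stabilizer, not the subtree stabilizer. In item \ref{theosuffisant2}, choosing $v_i$ so that $h\notin\St_G(v_i\mathcal T)$ only says $h$ moves some point of $v_i\mathcal T$, not that $h(v_i)\neq v_i$; so your coefficient at $h$ could still be $1$. The repair is the one you half-describe under ``pushing the $v_i$ deeper'': replace each $v_i$ by a descendant actually moved by the relevant $h$, using finiteness of $N$ and of the test set $F$ to do this simultaneously. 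Once that is made precise, your items \ref{theosuffisant1}--\ref{theosuffisant3} become an honest application of the quoted criterion to $X=\mathcal T^0$, $X=(\mathcal T^0)^n$, and $X=\bigsqcup_m(\mathcal T^0)^m$ respectively.
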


It is shown that the inverse implication of Theorem \ref{theosuffisant}.\ref{theosuffisant1} is not true, unless one assumes a certain algebraic condition on $G$:

\begin{notation}\label{notA}
 A countable group $G$ is said to satisfy (A) if the normalizer $N_G \left(H \right)$ of any non-central finite group $H$ has infinite index in $G$.
\end{notation}

\begin{theo}[\cite{Moi11}]\label{theonec}
Let $G$ be a countable group satisfying (A).

 Suppose that $G$ acts spherically transitively on a rooted tree $\mathcal T$. If there exists a subtree $v\mathcal T$ whose stabilizer $\St_G \left( v\mathcal T\right)$ in $G$ is not trivial, then the $\ast$-homomorphism $\rho$ defined on $\C G$ is not injective.   
\end{theo}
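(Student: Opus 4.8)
The plan is to reduce the statement to the non‑injectivity of a single quasi‑regular representation, and then to bring in property (A).

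We may assume $G$ acts faithfully on $\mathcal T$, since otherwise the (nontrivial) kernel of the action already produces elements $g-1\in\mathbb C G$ annihilated by $\rho$. Fix $1\neq h\in\St_G(v\mathcal T)$; as $\St_G(\mathcal T)=1$, the vertex $v$ lies at some level $m\geq 1$. Since $h$ fixes $v$ and all its descendants and fixes the root, it fixes the whole geodesic $[\varnothing,v]$; in particular $h$ fixes at least one vertex at every level $n$ (the ancestor of $v$ if $n\leq m$, a descendant of $v$ if $n\geq m$), call it $w_n$. Observe also that $H:=\langle h\rangle$ is non‑central: if $h$ were central then $\textrm{Fix}_{\mathcal T^0}(h)$ would be $G$‑invariant, hence by spherical transitivity would contain $\bigcup_{w\in L_m}(w\mathcal T)^0$, hence all of $\mathcal T^0$, forcing $h=1$.

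Recall $\rho=\bigoplus_{n\geq 0}\rho_n$, and that spherical transitivity makes $L_n$ a single $G$‑orbit, so $\rho_n\cong\lambda_{G/\St_G(w_n)}$. From the explicit description of kernels,
\[
\ker\bigl(\lambda_{G/K}\big|_{\mathbb C G}\bigr)=\Bigl\{\textstyle\sum_g c_g\,g\ \Big|\ \sum_{g\in aKb}c_g=0\ \text{ for all }a,b\in G\Bigr\},
\]
together with the elementary fact that if $H\leq K$ then every translate $aKb$ is a disjoint union of translates of $H$, one gets $\ker(\lambda_{G/K}|_{\mathbb C G})\supseteq\ker(\lambda_{G/H}|_{\mathbb C G})$ whenever $H\leq K$. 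Applying this with $K=\St_G(w_n)\ni h$ for each $n$ yields
\[
\ker\bigl(\rho\big|_{\mathbb C G}\bigr)=\bigcap_{n\geq 0}\ker\bigl(\rho_n\big|_{\mathbb C G}\bigr)\ \supseteq\ \ker\bigl(\lambda_{G/H}\big|_{\mathbb C G}\bigr).
\]
So it suffices to prove that $\lambda_{G/H}$ is not faithful on $\mathbb C G$ for a suitable nontrivial subgroup $H\leq\St_G(v\mathcal T)$.

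This last point is the crux, and it is here that property (A) enters. When $\St_G(v\mathcal T)$ contains torsion we may take $h$, hence $H$, of prime order, so $H$ is a non‑central finite subgroup; (A) then gives $[G:N_G(H)]=\infty$, i.e. $H$ has infinitely many conjugates in $G$. One must deduce that the ideal $\ker(\lambda_{G/H}|_{\mathbb C G})$ above is nonzero — equivalently, that the indicator functions of the (infinitely many) translates of the conjugates of $H$ fail to span $\mathbb C G$. The delicate feature is that a normal $H$ would put $1-\frac1{|H|}\sum_{f\in H}f$ into this kernel, whereas non‑normality destroys that obvious relation, so one has to manufacture a genuine one out of several conjugates $H=H_0,H_1,H_2,\dots$ using the abundance supplied by (A); a convenient setting is the chain of finite quotients $\bar G_n=G/\St_G(L_n)$, where $\lambda_{G/H}$ descends to $\mathrm{Ind}_{\bar H_n}^{\bar G_n}\mathbf 1$, which is non‑faithful exactly when some irreducible of $\bar G_n$ has no $\bar H_n$‑fixed vector, after which one assembles a single nonzero element of $\mathbb C G$ compatible with all $n$. (If $\St_G(v\mathcal T)$ is torsion‑free, take $H=\langle h\rangle\cong\mathbb Z$: either $H$ has finite index, so $\lambda_{G/H}$ is finite‑dimensional and hence non‑faithful on the infinite‑dimensional group algebra — $G$ being infinite since it acts spherically transitively on an infinite tree — or one argues as above with $H$ amenable.) The main obstacle is precisely this representation‑theoretic lemma, that property (A) forbids faithfulness of $\lambda_{G/H}$; everything else is a formal reduction.
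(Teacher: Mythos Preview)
Your reduction to a single quasi-regular representation is clean, but the ``representation-theoretic lemma'' you isolate at the end is not merely an obstacle --- it is false, and in exactly the situation that property~(A) produces. Take the case $|H|=2$, say $H=\{1,h\}$, with infinitely many conjugates (which is what (A) guarantees once you have shown $H$ non-central). Unwinding your own description of the kernel, $x=\sum_g c_g g$ lies in $\ker(\lambda_{G/H}|_{\mathbb C G})$ iff $c_u+c_{uh'}=0$ for every $u\in G$ and every conjugate $h'$ of $h$. Fix any $u$ in the support of $x$: then $c_{uh'}=-c_u\neq 0$ for each of the infinitely many conjugates $h'$, and these elements $uh'$ are pairwise distinct, contradicting finite support. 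Hence $\ker(\lambda_{G/H}|_{\mathbb C G})=0$, and the inclusion $\ker(\rho)\supseteq\ker(\lambda_{G/H})$ gives you nothing. The torsion-free branch has the same defect: for $G=F_2$ and $H=\langle a\rangle$, distinct conjugates of $H$ meet only in $\{1\}$, and the same finite-support argument forces $\ker(\lambda_{G/H}|_{\mathbb C G})=0$. So the reduction to $\lambda_{G/\langle h\rangle}$ is too lossy: it throws away precisely the information --- how the \emph{different} stabilisers $\St_G(w)$ interact across levels and across vertices --- that makes $\ker\rho$ large.

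The paper does not reprove this theorem (it is quoted from \cite{Moi11}), but it describes the intended strategy just before Proposition~\ref{propvalence2}: one writes down an explicit product of the form $M=\prod_i(1-g_i)$, with the $g_i$ chosen in stabilisers of enough subtrees that every vertex is fixed by some $g_i$ (so $\rho(M)=0$), and then uses property~(A) to certify $M\neq 0$ in $\mathbb C G$. Propositions~\ref{propvalence2} and~\ref{propvalence3} carry this out for $d=2,3$; the role of (A) in the general argument is to obstruct the algebraic coincidences (like $g_1=g_2 g_3$ in the $d=3$ analysis) that would collapse such a product to zero. Your quasi-regular reduction bypasses this constructive step, and that is where the argument breaks.
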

 
These results obviously apply when $\mathcal T$ is a regular rooted tree $\tree$ and $G$ is generated by an automaton. The two next paragraphs aim to show that the language of automata groups is relevant for giving partial answers to certain natural questions arising from \cite{Moi11}.

\subsection{The inverse implication of Theorem \ref{theosuffisant}.\ref{theosuffisant1}: the case $d=2,3$}

As suggested by Theorem \ref{theonec}, the general strategy to study the inverse implication of Theorem \ref{theosuffisant}.\ref{theosuffisant1} is the following: if there exists a subtree $v\tree$ whose stabilizer in $G$ is not trivial, and the action of $G$ on $\tree$ is spherically transitive, then there are \emph{natural} elements $M\in \C G$ in the kernel of $\rho$. The difficulty is to prove that one of them is not 0. This cannot always be achieved, and in fact the sufficient condition in Theorem \ref{theosuffisant}.\ref{theosuffisant1} is not necessary in general (see \cite[Example 4.6]{Moi11}). When $G$ is self-similar, the existence of a subtree whose stabilizer in $G$ is not trivial clearly implies that the same holds for a subtree rooted at the first level, 
\[
 \exists v \in \tree^0, \ \St_G \left(v\tree\right) \neq \left\lbrace 1\right\rbrace \Longrightarrow \exists v \in L_1, \ \St_G \left(v\tree\right) \neq \left\lbrace 1\right\rbrace.
\]
 
For small valences, this remark allows us to study more easily the non-nullity of these elements $M\in \C G \cap \text{ker} \rho$. More precisely, we can in these cases replace the algebraic condition (A) in Theorem \ref{theonec} by conditions involving only the action of $G$ on $\tree$. 

\begin{prop}\label{propvalence2}
Let $G$ be a self-similar subgroup of $\text{Aut}\left(\mathcal T _2 \right)$. Then $\lambda \prec \rho$ if and only if $\St_G \left(v\mathcal T _2 \right) =\left\lbrace 1\right\rbrace $ for all subtree $v\mathcal T _2$.  
\end{prop}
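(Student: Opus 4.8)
The ``only if'' direction is the contrapositive of Theorem~\ref{theosuffisant}.\ref{theosuffisant1}: if some subtree stabilizer is non-trivial then $\rho$ is not faithful on $\C G$, hence $\lambda\not\prec\rho$. So the content is the ``if'' direction, and by Theorem~\ref{theosuffisant}.\ref{theosuffisant1} it suffices to upgrade the hypothesis ``$\St_G(v\mathcal T_2)=\{1\}$ for all $v$'' to the formally stronger ``$\St_G(v\mathcal T_2)=\{1\}$ for \emph{every} vertex, in particular $\lambda\prec\rho$ follows directly.'' Wait --- that is already the hypothesis. The real point must be that Theorem~\ref{theosuffisant}.\ref{theosuffisant1} gives $\lambda\prec\rho$ \emph{immediately} once all subtree stabilizers are trivial, with no transitivity or condition~(A) needed; so the proposition's ``if'' direction is literally a special case. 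The genuine work is therefore proving the \emph{converse to Theorem~\ref{theosuffisant}.\ref{theosuffisant1}} in valence $2$ \emph{without} assuming condition~(A): namely, if some $\St_G(v\mathcal T_2)\neq\{1\}$, then $\rho$ is not injective on $\C G$ --- which is what $\lambda\not\prec\rho$ needs (since $\rho$ not injective on the group algebra means $C^*_\rho(G)$ is a proper quotient, and for amenable considerations or directly, $\lambda$ weakly contained in $\rho$ would force $\ker\rho\subseteq\ker\lambda=0$).

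The plan is as follows. First, reduce using self-similarity: by the displayed implication in the text, if some subtree stabilizer is non-trivial, then $\St_G(v\mathcal T_2)\neq\{1\}$ for some $v\in L_1=\{1,2\}$; pick $g\in\St_G(v\mathcal T_2)\setminus\{1\}$. Second, analyze the action of $G$ on $L_1$. If $G$ fixes $L_1$ pointwise, then $G$ is not spherically transitive and indeed $\rho$ decomposes along the two subtrees; here one argues directly that $g$ acting trivially on $v\mathcal T_2$ and non-trivially on the other subtree $w\mathcal T_2$ ($\{v,w\}=L_1$) produces a zero-divisor-type relation. Concretely, look at the projection $p$ onto $\ell^2(v\mathcal T_2^0)$: since $g$ fixes $v\mathcal T_2$, the element $1-g$ is killed by $\rho$ on that summand, and one must show $(1-g)\neq 0$ in $\C G$, which is immediate since $g\neq 1$ --- but $\rho(1-g)$ need not be $0$ globally. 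So instead one builds the element $M$ as in the general strategy sketched before the proposition: using a transversal and spherical transitivity one averages $g$ over a coset to get an idempotent-like combination in $\ker\rho$, and the self-similar structure plus $d=2$ forces the relevant non-vanishing.

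The third and main step --- the case where $G$ acts transitively on $L_1$ (the spherically transitive situation, modulo passing to a finite-index subgroup fixing $L_1$) --- is where $d=2$ is essential. Here one should exploit that $\St_G(L_1)$ has index $2$ in its image, so $\Phi$ maps $\St_G(L_1)$ into $G\times G$, and the element $g\in\St_G(v\mathcal T_2)$ has $\Phi(g)=(1,\varphi_w(g))$ with $\varphi_w(g)\neq 1$. One then wants to produce a non-zero $M\in\C G\cap\ker\rho$; the natural candidate is $M=(1-g)\cdot c$ where $c$ is chosen so that $\rho(c)$ ``spreads'' the support of $1-g$ across the whole tree via transitivity. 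The non-vanishing of $M$ in $\C G$ should follow from a leading-term / word-length argument: writing everything in terms of a fixed automaton generating set and using that $\varphi_w$ does not increase word length, one tracks the ``longest'' group element appearing and shows cancellation to zero is impossible. \textbf{The hard part will be precisely this non-vanishing argument} --- showing that the natural element of $\ker\rho$ built from $g$ and the transitivity of the action is genuinely nonzero in $\C G$ when $d=2$ (and, the proposition suggests, also $d=3$), replacing the role played by condition~(A) in Theorem~\ref{theonec}; the combinatorics of why $d\le 3$ suffices but larger $d$ may not is the crux, and presumably rests on the fact that $\mathfrak S_2$ (and $\mathfrak S_3$) have no non-trivial proper subgroup acting the wrong way, whereas $\mathfrak S_d$ for $d\ge 4$ does.
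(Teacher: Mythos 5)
Your reduction is correct as far as it goes: the \emph{if} direction is Theorem \ref{theosuffisant}.\ref{theosuffisant1}, and the \emph{only if} direction amounts to producing a non-zero element of $\C G$ killed by $\rho$, starting from a non-trivial $g\in\St_G\left(v\mathcal T_2\right)$ with $v\in L_1$. But your plan stops exactly at the point you yourself flag as ``the hard part'', and that is where the actual content lies. The element is completely explicit: since $G$ is self-similar and non-trivial it cannot be contained in $\St_G\left(L_1\right)$ (otherwise every element would fix every level by induction and $G$ would be trivial), so there is $h\in G$ swapping the two vertices of $L_1$. Then $g$ and $hgh^{-1}$ stabilize complementary subtrees, $\Phi\left(\rho\left(g\right)\right)$ and $\Phi\left(\rho\left(hgh^{-1}\right)\right)$ are diagonal $2\times 2$ matrices with the entry $1$ in complementary slots, and $M=\left(1-g\right)\left(1-hgh^{-1}\right)$ satisfies $\Phi\left(\rho\left(M\right)\right)=0$, hence $\rho\left(M\right)=0$ by the injectivity of $\Phi$ on $\fond$ (Property \ref{properext}). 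No spherical transitivity, no passage to a finite-index subgroup, and no case split on the action on $L_1$ is needed; the case ``$G$ fixes $L_1$ pointwise'' that you set aside separately is vacuous for a non-trivial self-similar group, an observation your proposal misses.

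The non-vanishing of $M$, which you expect to require a leading-term or word-length analysis, is a one-line check: $M=1-g-hgh^{-1}+ghgh^{-1}$, and since neither $g$ nor $hgh^{-1}$ is trivial (and $ghgh^{-1}$ is non-trivial as well, its image under $\Phi$ being diagonal with both entries non-trivial), the coefficient of the identity in $M$ does not vanish. Your closing speculation that the restriction to small $d$ reflects the subgroup structure of $\mathfrak S_d$ is also off the mark: what $d=2$ buys is simply that any element moving $L_1$ must swap its two vertices, so a single conjugate of $g$ already stabilizes the complementary subtree; for $d=3$ the paper needs the extra hypotheses of Proposition \ref{propvalence3} and the separate Lemma \ref{lemmevalence3}. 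As written, your proposal does not contain a proof of the only-if direction.
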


\begin{proof} 
The \emph{if} part is implied by Theorem \ref{theosuffisant}.\ref{theosuffisant1}. Let us now prove the \emph{only if} part, assuming that $G$ is not trivial, since this case is trivial. Suppose $v\in L_1=\left\lbrace 1,2\right\rbrace $ and $g$ is a non-trivial element in $G$ fixing $v\mathcal T _2 $. Since $G$ is self-similar and non-trivial, there exists in $G$ an element $h\notin \St_G \left(L_1\right)$. Then, there are non-trivial elements $x$ and $y$ in $G$ such that: 
\[
\Phi\left(\rho\left(g\right)\right)= \left(\begin{array}{cc} x & 0 \\ 0 & 1\end{array}\right) \text{ and } \Phi\left(\rho\left(hgh^{-1}\right)\right)= \left(\begin{array}{cc} 1 & 0 \\ 0 & y\end{array}\right)
\]
or maybe the other way around. In both cases, if we let 
\[M=\left(1-g\right)\left(1-hgh^{-1}\right),\] 
one sees that $\Phi\left(\rho\left(M\right) \right)=0$ which implies that $\rho\left(M\right)=0$ since $\Phi$ is injective on $\fond$ by Property \ref{properext}. Moreover, $M\neq 0$. Indeed,                                                                                                                                                                                                                                                            
\[M=1+ghgh^{-1}-g-hgh^{-1}                                                                                                                                                                                                                                                                                     \]
so that 1 cannot be cancelled since neither $g$ nor $hgh^{-1} $ is trivial.
\end{proof}
 
\begin{prop}\label{propvalence3}
Let $G$ be a self-similar subgroup of $ \text{Aut}\left(\mathcal T _3\right)$ acting spherically transitively. Suppose there is a vertex $v \in L_1=\left\lbrace 1,2,3 \right\rbrace  $ such that the group $\St_G\left(v \mathcal T _3\right)\cap \St_G\left(L_1\right)\neq\left\lbrace 1\right\rbrace$. Then, the $\ast$-homomorphism $\rho$ defined on $\C G$ is not injective.
 \end{prop}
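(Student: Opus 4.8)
The plan is to mimic the strategy of Proposition \ref{propvalence2}, producing a natural nonzero element $M$ of $\C G$ that lies in the kernel of $\rho$. The starting data is a non-trivial $g \in \St_G(v\mathcal T_3) \cap \St_G(L_1)$ for some $v \in L_1 = \{1,2,3\}$; after relabelling we may assume $v = 1$. Since $g$ fixes $L_1$ and fixes the subtree $1\mathcal T_3$ pointwise, its recursion matrix is diagonal of the shape $\Phi(\rho(g)) = \mathrm{diag}(1, x_2, x_3)$ with $x_2 = \varphi_2(g)$, $x_3 = \varphi_3(g)$ in $G$, not both trivial (else $g = 1$).

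Next I would use spherical transitivity to move the ``slot'' where $g$ acts trivially. Because the action on $L_1$ is transitive, for each $j \in \{1,2,3\}$ there is $h_j \in G$ with $h_j(1) = j$ (take $h_1 = 1$). Then $g_j := h_j g h_j^{-1}$ is a non-trivial element of $G$ fixing the subtree $j\mathcal T_3$ pointwise; moreover $g_j \in \St_G(L_1)$ since $g$ is (here I use that $g \in \St_G(L_1)$ is a normal-ish condition only after checking $h_j g h_j^{-1}$ still fixes $L_1$ — it does, because conjugating a level stabilizer by any group element is again in the level stabilizer, as $\St_G(L_1)$ is normal in $G$). Thus $\Phi(\rho(g_j))$ is diagonal with a $1$ in the $j$-th entry. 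Form
\[
M = (1 - g_1)(1 - g_2)(1 - g_3) \in \C G.
\]
Applying the ring homomorphism $\Phi \circ \rho \colon \C G \to M_3(\fond)$, which sends each $1 - g_j$ to a diagonal matrix whose $j$-th diagonal entry vanishes, the product of the three diagonal matrices has every diagonal entry equal to $0$ (entry $i$ is killed by the factor coming from $g_i$), so $\Phi(\rho(M)) = 0$; since $\Phi$ is injective on $\fond$ by Property \ref{properext}, $\rho(M) = 0$.

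It remains to show $M \neq 0$ in $\C G$, and this is the step I expect to be the only real obstacle. Expanding, $M = 1 - \sum g_j + \sum_{j<k} g_j g_k - g_1 g_2 g_3$, and one must argue the coefficient of some group element is nonzero; the natural candidate is the coefficient of $1 \in G$, but cancellations of the form $g_j g_k = 1$ or $g_1 g_2 g_3 = 1$ could in principle occur, and unlike the valence-$2$ case there are more terms to control. I would handle this by a minimal-length or support argument: among the non-trivial $g_j$, pick one of maximal word length, say $g_1$ (if $x_2, x_3$ are both non-trivial one has more freedom); since each $g_j$ fixes $L_1$ while the $g_j$ are pairwise ``supported'' on different first-level slots in a controlled way, one shows the longest surviving word in the expansion cannot cancel — concretely, by passing through $\Phi$ and tracking which diagonal slots are non-trivial, the term $g_1 g_2 g_3$ has recursion matrix with all three diagonal entries non-trivial, whereas $1$ has all entries trivial and each $g_j$ and each $g_j g_k$ has at least one trivial diagonal entry, so $g_1 g_2 g_3 \neq 1$ and it cannot be cancelled by any other monomial. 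This gives $M \neq 0$, completing the proof that $\rho$ is non-injective on $\C G$.
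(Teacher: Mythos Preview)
Your construction of $M=(1-g_1)(1-g_2)(1-g_3)$ and the verification that $\rho(M)=0$ are correct and coincide with what the paper does in its third case. The gap is entirely in the claim $M\neq 0$: your diagonal-slot bookkeeping does not hold. Write $\Phi(\rho(g_1))=\mathrm{diag}(1,a,b)$ with $a,b\neq 1$; conjugating by elements acting on $L_1$ as $(12)$ and $(13)$ gives $\Phi(\rho(g_2))=\mathrm{diag}(a',1,b')$ and $\Phi(\rho(g_3))=\mathrm{diag}(b'',a'',1)$ with all displayed entries nontrivial. Then $\Phi(\rho(g_1g_2))=\mathrm{diag}(a',a,bb')$ need have \emph{no} trivial entry, contradicting your assertion about the $g_jg_k$; and $\Phi(\rho(g_1g_2g_3))=\mathrm{diag}(a'b'',\,a a'',\,bb')$, any of whose entries may well be trivial (nothing prevents, say, $a a''=1$), contradicting your assertion about $g_1g_2g_3$. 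So the ``longest surviving word'' argument does not go through.

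More to the point, $M$ can genuinely vanish for a particular choice of the $g_j$: if the $g_j$ pairwise commute (which you cannot exclude) and $M=0$, expanding shows $g_1g_2g_3=1$ and $g_1=g_2g_3$, forcing each $g_j$ to have order $2$. Nothing in your setup rules this out for the three specific conjugates you picked. The paper's proof confronts exactly this obstruction: it splits into three cases. If some element fixes \emph{two} first-level subtrees (or if two such stabiliser elements fail to commute, producing such an element as a commutator), a two-factor product $(1-g)(1-hgh^{-1})$ already works as in Proposition~\ref{propvalence2}. In the remaining case, all elements $g_i\in\St_G(\{i\}\mathcal T_3,1)$ commute pairwise, and the paper shows that if $M(g_1,g_2,g_3)=0$ for \emph{every} choice then the normal subgroup generated by the $\St_G(\{i\}\mathcal T_3,1)$ would consist entirely of involutions; Lemma~\ref{lemmevalence3} (which uses spherical transitivity in an essential way) excludes this. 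Your argument is missing this structural input; without it, or a replacement, the non-vanishing of $M$ cannot be established.
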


We will need the following lemma.

\begin{lem}\label{lemmevalence3}
Let $G$ be a self-similar subgroup of $ \text{Aut}\left(\mathcal T _3\right)$ acting spherically transitively. Let $H$ be a subgroup of $G$ in which all elements have order 2. Then, $H$ is not normal.  
\end{lem}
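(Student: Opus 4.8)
Suppose for contradiction that $H \trianglelefteq G$ is a non-trivial normal subgroup all of whose elements have order $2$; such an $H$ is elementary abelian (a group of exponent $2$ is automatically abelian), hence finite in the relevant sense only if it is finitely generated, but in any case the key structural fact is that $H$ is abelian. The plan is to exploit self-similarity together with spherical transitivity to produce, from one non-trivial $h \in H$, enough ``conjugated copies'' of involutions to force a contradiction with the exponent-$2$ condition. First I would pick $h \in H \setminus \{1\}$ and analyse its image $\Phi(h)$ under the recursion morphism: since $h$ has order $2$, the permutation $h_1 \in \mathfrak S_3$ it induces on $L_1 = \{1,2,3\}$ is either trivial or a transposition. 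I treat the two cases separately.

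\emph{Case $h_1 = 1$.} Then $\Phi(h) = \mathrm{diag}(\varphi_1(h),\varphi_2(h),\varphi_3(h))$ with each $\varphi_i(h) \in G$ of order dividing $2$, not all trivial (as $h \neq 1$ and $h$ fixes $L_1$). Because $G$ acts spherically transitively on $\mathcal T_3$, it acts transitively on $L_1$, so there is $\sigma \in G$ realizing any chosen $3$-cycle (or at least cyclically permuting the three first-level subtrees up to the section data). Conjugating $h$ by such elements and multiplying, I can arrange the non-trivial section to sit in a prescribed coordinate; more importantly, taking a product like $h \cdot \sigma h \sigma^{-1}$ (with $\sigma$ inducing a $3$-cycle on $L_1$) yields an element of $H$ whose first-level sections in two distinct coordinates are non-trivial involutions coming from $G$. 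Pushing this far enough — using that $H$ is abelian so all these conjugates commute — one builds an element whose order exceeds $2$ (for instance by making a single section equal to a product $ab$ of two commuting involutions $a,b$ with $ab \neq 1$, which happens once the sections are genuinely independent), contradicting the hypothesis on $H$. The descent here is the standard self-similar induction: the sections $\varphi_i(h)$ again generate (together with conjugates) a normal-type subgroup of the section group acting spherically transitively on $\mathcal T_3$, so one can iterate.

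\emph{Case $h_1$ a transposition,} say $h_1 = (1\,2)$. Then $h^2 = 1$ forces, reading off $\Phi(h)^2 = 1$, the relations $\varphi_1(h)\varphi_2(h) = 1$ and $\varphi_3(h)^2 = 1$; so $\varphi_2(h) = \varphi_1(h)^{-1}$ and $\varphi_3(h)$ is an involution or trivial. Normality of $H$ in $G$ combined with spherical transitivity lets me conjugate $h$ by an element $\tau \in G$ inducing a $3$-cycle on $L_1$; then $h \tau h \tau^{-1} \in H$ lies in $\St_G(L_1)$ and its sections are non-trivial words in $\varphi_1(h)$ and its conjugates. Again, since $H$ is abelian, I can combine several such elements and push the obstruction into a single first-level coordinate, producing a section equal to an element of $G$ of order $> 2$ unless all the $\varphi_i(h)$ are trivial — but if all sections are trivial then $h$ is determined by $h_1 = (1\,2)$ acting as the ``rigid'' transposition of the two subtrees, and one checks directly (using another conjugate by a $3$-cycle, which does not commute with $(1\,2)$ inside $\mathfrak S_3$) that the group generated inside $H$ contains a $3$-cycle on $L_1$, hence an element of order $3$ — again contradicting exponent $2$.

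\textbf{Main obstacle.} The delicate point is the bookkeeping in the self-similar descent: one must ensure that the involutions appearing as sections are genuinely ``independent'' (so that a suitable product has order $>2$) rather than conspiring to all coincide or cancel. This is where spherical transitivity is essential — it guarantees the presence of elements of $G$ cyclically permuting the three first-level subtrees, so that conjugation genuinely spreads a single non-trivial section across coordinates and prevents a degenerate configuration. I expect the cleanest write-up to set up an induction on a complexity measure (e.g. the minimal level at which some section of some element of $H$ is non-trivial), with the base case handled by the direct $\mathfrak S_3$-computation above, and the inductive step feeding the section subgroup — which inherits both self-similarity and spherical transitivity on $\mathcal T_3$, and whose relevant subgroup still has exponent $2$ — back into the hypothesis.
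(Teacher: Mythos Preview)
Your Case 1 contains a genuine error. You claim a contradiction by producing a section equal to a product $ab$ of two \emph{commuting} involutions with $ab \neq 1$, asserting this has order exceeding $2$; but commuting involutions satisfy $(ab)^2 = a^2 b^2 = 1$, so $ab$ itself has order at most $2$ --- no contradiction. More structurally, the sections $\varphi_i(h)$ of elements $h \in H$ lie in $G$ by self-similarity but have no reason to lie in $H$, so your proposed induction (``the relevant subgroup still has exponent $2$'') is not well-grounded: you never identify a normal exponent-$2$ subgroup of the section group to which the inductive hypothesis would apply.

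The argument the paper gives is much more direct, and in fact you stumble onto it at the very end of Case 2 --- but only as a degenerate sub-case, whereas it is the whole proof. Since $H$ is normal and abelian, every $h \in H$ commutes with every conjugate $ghg^{-1}$. Pick non-trivial $h \in H$ and let $n$ be the \emph{minimal} level on which $h$ acts non-trivially. Then $h_n$ is a product of disjoint transpositions, and by minimality of $n$ each such transposition is supported inside the three children of some vertex $v \in L_{n-1}$. Fix one, say swapping $vx_0$ and $vx_1$ and fixing $vx$. Spherical transitivity provides $g \in G$ with $g(vx_0) = vx$; then $g(v)=v$, so $g$ preserves the three children of $v$, and the restrictions of $h_n$ and $(ghg^{-1})_n = g_n h_n g_n^{-1}$ to these three vertices are two \emph{distinct} transpositions in $\mathfrak S_3$. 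Distinct transpositions in $\mathfrak S_3$ never commute --- contradiction. No case split on $h_1$, no recursion on sections, no explicit order-$3$ element is needed.
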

\begin{proof}[Proof of Lemma \ref{lemmevalence3}]
Suppose that $H$ is normal. It is also abelian since all its elements have order 2. Therefore, 
\begin{equation}\label{eqval3}
 \forall h\in H, \ \forall g\in G,\ h \text{ and } ghg^{-1} \text{commute}.
\end{equation}

Let $h\in H$ be non-trivial and $n$ the smallest non-negative integer such that the permutation $h_n$ acts non-trivially on $L_n$ (of course, $n>0$). This permutation can be decomposed into a non-trivial product of transpositions $\tau_i$ with disjoints supports:
\[h_n=\prod_{i=1 \dots k} \tau_i \] 

By the minimality of $n$, there is for each $i$ a vertex $v_i\in L_{n-1}$ such that the support of $\tau_i$ is in the first level of $v_i \mathcal T _3 $: 
\begin{eqnarray*}
\forall i=1 \dots k, \ \exists v_i \in L_{n-1}, \ x_0(i),x_1(i) \in \left\lbrace 1,2,3 \right\rbrace \textrm{ s.t. } \\
x_0(i)\neq x_1(i) \textrm{ and } \tau_i = \left( v_i x_0(i), v_i x_1(i)\right).  
\end{eqnarray*}

Let us choose an index $i$ and let $x$ be the unique element in $\left\lbrace 1,2,3 \right\rbrace \setminus \left\lbrace x_0(i),x_1(i) \right\rbrace $. Let also $g\in G$ such that $g\left(v_i x_0(i)\right)= v_i x$. In particular, $g\left(v_i\right)=v_i$. Thus $g$ preserves the first level of the subtree $v_i \mathcal T _3 $ (in which the support of $\tau_i$ is contained) which imply that the permutations $h_n$ and $g_n h_n g_n^{-1}$ can be restricted to it. By \ref{eqval3} the permutations $ h_n$ and $g_n h_n g_n^{-1}$ commute, so do these restrictions. But by construction, the last are transpositions in $\mathfrak{S}_3$ with distinct support, leading to a contradiction.   
\end{proof}

\begin{proof}[Proof of Proposition \ref{propvalence3}]

For each subset $A$ of the first level $L_1$ of $\mathcal T _3$, we define 
\[\St_G \left(A\mathcal T _3  ,1\right):= \bigcap_{v\in A}\St_G\left(v \mathcal T _3\right)\cap \St_G\left(L_1\right).\]

The assumption of the proposition implies that there exists $v\in L_1$ such that $ \St_G \left(\left\lbrace v \right\rbrace \mathcal T _3  ,1\right) \neq \left\lbrace  1\right\rbrace $. The group $G$ acts transitively on $L_1$ and therefore the same holds for all $v\in L_1$ since these groups are conjugate. 

\emph{1st case}: there exists $A \subset L_1$ with $\left| A \right| =2$ and  $\St_G \left(A\mathcal T _3  ,1\right)\neq \left\lbrace  1\right\rbrace$. The construction of a non-zero element of $\C G$ in the kernel of $\rho$ is here the same as in the proof of Proposition \ref{propvalence2}: we consider a non-trivial element $g\in \St_G \left(A\mathcal T _3  ,1\right)$ and $h\in G$ such that $A\cup h\left(A\right)=L_1$ (equivalently $h\left(A\right) \nsubseteq A $; such an $h$ exists since $G$ acts transitively on $L_1$). Now, $M:=\left(1-g\right)\left(1-hgh^{-1}\right)$ is the desired element.

\emph{2nd case}: there exists $v_1\neq v_2$ and non-trivial elements $g_1\in \St_G \left(\left\lbrace v_1 \right\rbrace \mathcal T _3  ,1\right)$, $g_2\in \St_G \left(\left\lbrace v_2 \right\rbrace \mathcal T _3  ,1\right)$ which do not commute. In this case, the commutator $\left[g_1,g_2\right]$ yields a non-trivial element in $\St_G \left(\left\lbrace v_1,v_2 \right\rbrace \mathcal T _3  ,1\right) $ and we conclude using the first case.

\emph{3rd case}: for all $i\in \left\lbrace 1,2,3 \right\rbrace = L_1$ and $g_i \in \St_G \left(\left\lbrace i \right\rbrace \mathcal T _3  ,1\right)$, the elements $g_i$ and $g_j$ commute as soon as $i\neq j$. Then, let
\[
 M\left(g_1,g_2,g_3\right):= \left(1-g_1\right)\left(1-g_2\right)\left(1-g_3\right).
\]

One has $\rho\left( M\left(g_1,g_2,g_3\right) \right)=0$ and we want to find the $g_i$\textquoteright s such that $M\left(g_1,g_2,g_3\right)$ is non-zero. 
  
If $M\left(g_1,g_2,g_3\right)=0$, then 
\[ 1-\left(g_1+g_2+g_3\right)+\left(g_1 g_2 + g_2 g_3 + g_1 g_3\right) - g_1 g_2 g_3 = 0. \]

Since the $g_i$\textquoteright s are all non-trivial, one has $g_1 g_2 g_3=1$. The same argument shows that $g_1=g_2 g_3$. Thus, $g_1$ has order 2. Using the fact that the $g_i$\textquoteright s commute, one shows in the same way that $g_2$ and $g_3$ have order 2. 

Therefore, if 
\begin{equation}\label{eqprop3}
 \forall g_1,g_2,g_3 \text{ s.t. } g_i \in \St_G \left(\left\lbrace i \right\rbrace \mathcal T _3  ,1\right), \ M\left(g_1,g_2,g_3\right) = 0,
\end{equation}
then the subgroups $\St_G \left(\left\lbrace i \right\rbrace \mathcal T _3  ,1\right) $ would consist only of order 2 elements. The same would hold for the group that their union generates since $\left[g_i,g_j\right]=1$ as soon as $i\neq j$. But this last group is normal in $G$, hence Lemma \ref{lemmevalence3} prevents \ref{eqprop3} from holding.   
\end{proof}

\subsection{The classes $\mathscr C_p^{d}$}

Let $d>1$ be an integer and $G$ a subgroup of $\aut$. Theorem \ref{theosuffisant} motivates the following definition:

\begin{defi}
For each positive integer $p$, one says that $G$ belongs to the class $\mathscr C_p ^d$ if the regular representation $\lambda$ of $G$ is weakly contained in $p$-th tensor power $\rho^{\otimes p}$ of the fundamental representation $\rho$. 
\end{defi}
The last section will give examples of automata groups in $\mathscr C_1 ^d$. Besides, it is shown in \cite{Moi11} that there are subgroups of $\aut$ which do not belong to $\mathscr C_p^{d}$, for all positive integer $p$ (these are the so-called \emph{weakly branched} subgroups of $\aut$).

Note that for all positive $p$, $\rho^{\otimes p} \prec \rho^{\otimes p+1}$ because the trivial representation is contained in $\rho$. Therefore $\mathscr C_1^{d} \subset \mathscr C_2^{d}\subset \dots \mathscr C_p^{d}\subset \dots .$ The following construction aims to exhibit examples of automata groups that distinguish some of the $\mathscr C_p^{d}$\textquoteright s.     

To simplify the exposition, we will restrict ourselves to the case $d=2$, but the following construction could be extended to any valence. Let $G$ be a subgroup of $\text{Aut}\left( \mathcal{T}_2\right)$ generated by a finite set $S=\left\lbrace g_1,\dots,g_l \right\rbrace $. Let then consider $\mathfrak T \left(G\right)$ the subgroup of $\text{Aut}\left( \mathcal{T}_2\right) $ generated by $S$ and the elements $\tilde{g_i}$ defined for all $i=1 \dots l$ by $\Phi(\tilde{g_i}) = \left(1,g_i \right)$, i.e. 
\[\mathfrak T (G)=\left\langle g_1,\dots,g_l,\Phi^{-1}\left(\left(1,g_1\right)\right),\dots,\Phi^{-1}\left(\left(1,g_l\right)\right) \right\rangle .
\]

It is easy to see that if $G$ is self-similar (resp. generated by an automaton), then $\mathfrak T \left(G\right)$ is also self-similar (resp. generated by an automaton).

Moreover, the group we get is independent of the choice of the generating set $S$ of $G$. Indeed, $\mathfrak T \left(G\right)$ is also the group generated by $G \bigcup \left\lbrace \left(1,g\right)\ | \ g\in G \right\rbrace $. Thus, for any positive integer $p$, one can consider without ambiguity the group $\mathfrak T^{(p)} \left(G\right)$ that we get by applying this construction $p$ times. 

Now, let $G$ be a finitely generated subgroup of $\text{Aut}\left( \mathcal{T}_2\right)$ fulfilling moreover the following conditions:
\begin{itemize}
 \item $G$ is self-similar,
 \item the action of $G$ on the boundary $\partial \mathcal T _2$ is essentially free,
 \item the action of $G$ on $ \mathcal T _2$ is spherically transitive.
\end{itemize}

Then, 

\begin{prop}\label{propdistinction}
For all positive integer $p$, the group $\mathfrak T^{(p)} \left(G\right)$ distinguishes the classes $\mathscr C _{2^p}^2$ and $\mathscr C _{2^p -1}^2$.
\end{prop}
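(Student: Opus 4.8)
The plan is to locate the copy of $G^{2^p}$ inside $\Gamma:=\mathfrak T^{(p)}(G)$ that stabilises the $p$-th level and to read both assertions off the restriction of the fundamental representation to it. Set $S:=\St_\Gamma(L_p)$. Since $G$ is self-similar, spherically transitive and finitely generated, and each application of $\mathfrak T$ preserves these properties (noted in the text), so does every $\mathfrak T^{(j)}(G)$; moreover every generator of $\mathfrak T(H)$ has its two first-level sections in $H$ (for the added $\Phi^{-1}(1,h_i)$ these are $1$ and $h_i$), so $\varphi_v\big(\mathfrak T^{(j)}(G)\big)\subseteq\mathfrak T^{(j-1)}(G)$ for $v\in L_1$. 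The key structural claim, which I would prove by induction on the level (peeling off one copy of $\mathfrak T$ per level), is that $\Phi^{(p)}$ restricts to an isomorphism $S\xrightarrow{\ \sim\ }\prod_{w\in L_p}G=G^{2^p}$: the inclusion $\Phi^{(p)}(S)\subseteq G^{2^p}$ comes from this section-stability, and surjectivity from the definition of $\mathfrak T$, namely that $\mathfrak T(H)$ contains every $\Phi^{-1}(1,h)$, hence (conjugating by an element of $H$ realising the transposition of $L_1$, which exists by spherical transitivity) every $\Phi^{-1}(h,1)$, hence every $\Phi^{-1}(h,h')$. Spherical transitivity also gives $[\Gamma:S]=2^p$; and since $S$ fixes $L_0\cup\dots\cup L_p$ pointwise and acts on each $\ell^2(w\tree^0)$ ($w\in L_p$) through $\varphi_w$, which under the identification is the $w$-th coordinate projection $G^{2^p}\twoheadrightarrow G$, one obtains
\[
\rho_\Gamma\big|_{S}\ \cong\ (2^p-1)\cdot\mathbf 1_{S}\ \oplus\ \bigoplus_{w\in L_p}\big(\rho_G\circ\varphi_w\big),
\]
a finite-dimensional trivial part together with $2^p$ honest copies of the fundamental representation of $G$ placed on the $2^p$ coordinates.

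For membership in $\mathscr C^2_{2^p}$ I would argue: essential freeness of the action of $G$ on $\partial\tree$ forces $\St_G(v\tree)=\{1\}$ for every vertex $v$ (a non-trivial such element would fix the clopen set $\partial(v\tree)$ of positive measure pointwise), so Theorem \ref{theosuffisant}.\ref{theosuffisant1} yields $\lambda_G\prec\rho_G$. Since outer tensor powers preserve weak containment, $\lambda_{G^{2^p}}=\lambda_G^{\boxtimes 2^p}\prec\rho_G^{\boxtimes 2^p}$ as representations of $S\cong G^{2^p}$, and $\rho_G^{\boxtimes 2^p}=\bigotimes_{w\in L_p}(\rho_G\circ\varphi_w)$ occurs as a subrepresentation of $\big(\rho_\Gamma|_S\big)^{\otimes 2^p}=\rho_\Gamma^{\otimes 2^p}\big|_S$ (put the $w$-th summand in the $w$-th tensor slot). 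Hence $\lambda_S\prec\rho_\Gamma^{\otimes 2^p}|_S$; inducing and using $\lambda_\Gamma=\mathrm{Ind}_S^\Gamma\lambda_S$ together with $\mathrm{Ind}_S^\Gamma\big(\rho_\Gamma^{\otimes 2^p}|_S\big)\cong\rho_\Gamma^{\otimes 2^p}\otimes\ell^2(\Gamma/S)$, a finite multiple of $\rho_\Gamma^{\otimes 2^p}$, gives $\lambda_\Gamma\prec\rho_\Gamma^{\otimes 2^p}$, i.e. $\Gamma\in\mathscr C^2_{2^p}$.

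For non-membership in $\mathscr C^2_{2^p-1}$ I would produce an explicit witness. Fix $g_0\in G\setminus\{1\}$ ($G$ being spherically transitive is non-trivial), and for $w\in L_p$ let $g^{(w)}\in S$ be the element whose image under $\Phi^{(p)}$ is $g_0$ in coordinate $w$ and $1$ elsewhere; these are non-trivial, pairwise commuting, with pairwise disjoint supports. Put $f:=\prod_{w\in L_p}(1-g^{(w)})\in\C\Gamma$. Under $\C[G^{2^p}]\cong(\C G)^{\otimes 2^p}$ this is $(1-g_0)^{\otimes 2^p}\neq 0$, so $f\neq 0$ in $\C\Gamma$ and $\lambda_\Gamma(f)\neq 0$. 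Now for $q\le 2^p-1$, expand $\rho_\Gamma^{\otimes q}\big|_S=\big(\rho_\Gamma|_S\big)^{\otimes q}$ into a direct sum of $q$-fold tensor products of the summands of $\rho_\Gamma|_S$: each summand is trivial on every coordinate except the at most $q<2^p$ coordinates $w$ for which a factor $\rho_G\circ\varphi_w$ actually occurs, so choosing a coordinate $w^\ast$ not occurring, that summand sends $g^{(w^\ast)}$ to the identity, annihilates $1-g^{(w^\ast)}$, and hence annihilates $f$. Therefore $\rho_\Gamma^{\otimes q}(f)=0$ while $\lambda_\Gamma(f)\neq 0$, so $\lambda_\Gamma\not\prec\rho_\Gamma^{\otimes q}$; in particular $\Gamma\notin\mathscr C^2_{2^p-1}$ (and $\Gamma\notin\mathscr C^2_q$ for all $q<2^p$, consistent with the nesting).

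The main obstacle is the first, structural step: showing that $\Phi^{(p)}$ identifies $\St_\Gamma(L_p)$ with $G^{2^p}$ exactly. This needs a careful induction on $p$ (and on the level inside a fixed $\Gamma$) along the recursion, plus the verification that $\mathfrak T$ really adjoins all the ``diagonal'' elements $\Phi^{-1}(h,h')$ and not merely the $\Phi^{-1}(1,h_i)$. Once that identification — and the resulting decomposition of $\rho_\Gamma|_S$ displayed above — is in place, both halves reduce to short manipulations with induced representations (Fell's continuity of induction, $\mathrm{Ind}$ of a restriction) and with the behaviour of weak containment under outer tensor products.
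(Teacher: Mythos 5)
Your structural identification of $S=\St_{\Gamma}(L_p)$ with $G^{2^p}$ via $\Phi^{(p)}$ is correct (and in fact slightly stronger than what the paper establishes, which is only the non-triviality of $\Ri_{\Gamma}(w)$ together with $\varphi_w(\Gamma)\subseteq G$ for $w\in L_p$), and your non-membership argument is essentially the paper's: it is Lemma \ref{lemfonal}.\ref{lemfonali} applied with $M=\prod_{w\in L_p}(1-g^{(w)})$, and both the non-vanishing of $M$ in $\C\Gamma$ and the vanishing of $\rho_\Gamma^{\otimes q}(M)$ for $q<2^p$ are handled by the same coordinate-counting idea.

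The membership half, however, has a genuine gap at its final step. The isomorphism $\mathrm{Ind}_S^{\Gamma}\bigl(\rho_\Gamma^{\otimes 2^p}\vert_S\bigr)\cong\rho_\Gamma^{\otimes 2^p}\otimes\ell^2(\Gamma/S)$ is correct, but $\ell^2(\Gamma/S)$ is the quasi-regular representation of a finite quotient, not a multiple of the trivial representation, so the right-hand side is \emph{not} a finite multiple of $\rho_\Gamma^{\otimes 2^p}$ and you cannot conclude $\lambda_\Gamma\prec\rho_\Gamma^{\otimes 2^p}$. The principle you are implicitly invoking --- $H$ of finite index and $\lambda_H\prec\pi\vert_H$ imply $\lambda_\Gamma\prec\pi$ --- is false in general: take $\Gamma=\Z/2\Z$, $H=\{1\}$ and $\pi$ the trivial representation. (Embedding $\ell^2(\Gamma/S)$ into $\bigotimes_{w}\ell^2(L_p)\subset\rho_\Gamma^{\otimes 2^p}$ only salvages membership in $\mathscr C^2_{2^{p+1}}$.) The paper avoids inducing from $S$ altogether: it verifies the stabilizer criterion of Proposition 3.5 of \cite{Moi11} (quoted inside the proof of Lemma \ref{lemfonal}.\ref{lemfonalii}) directly for the diagonal action of $\Gamma$ on $2^p$-tuples of vertices, choosing $\xi_w\in\partial(w\mathcal T_2)$ outside the fixed-point set of every non-trivial element of $G$, which is possible by essential freeness and countability. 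What your reduction to $S$ loses is precisely the elements of $\Gamma\setminus S$: in the paper's argument these are handled for free because they permute the subtrees $w\mathcal T_2$ non-trivially and therefore move the tuple $(\xi_w)_{w}$, whereas a weak-containment statement for $\rho_\Gamma^{\otimes 2^p}\vert_S$ alone says nothing about them. Replacing your induction step by this direct verification repairs the proof.
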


We will need the following lemma:

\begin{lem}\label{lemfonal}
 Let $G$ be a finitely generated subgroup of $\aut$. 
\begin{enumerate}[i.]
 \item \label{lemfonali} Suppose there exists $n\in \N$ such that:
\begin{itemize}
 \item[-] the group $G$ acts transitively on the $n$-th level $L_n$ of $\tree$,
 \item[-] there is a vertex $v_0 \in L_n$ such that the group 
\[\text{Rist}_G \left(v_0\right) := \bigcap_{v\in L_n \setminus \left\lbrace v_0 \right\rbrace } \St_G \left(v\tree\right)   \] 
is not trivial.
\end{itemize}
Then, $G$ does not belong to the class $\mathscr C _{d^n-1}^d $.
 \item\label{lemfonalii} Suppose there are $l$ elements $\xi_1,\xi_2,\dots,\xi_l$ in the boundary $\partial \tree$ such that  
\[\bigcap_{i=1\dots l} \St_G(\xi_i) =\left\lbrace 1 \right\rbrace   \] 

Then, the group $G$ belongs to the class $\mathscr C _l ^2$. 
\end{enumerate}
\end{lem}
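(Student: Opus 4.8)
\textbf{Proof sketch for Lemma \ref{lemfonal}.}

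The plan is to treat the two parts separately, each by producing the relevant weak containment through an explicit coefficient computation, using the fact that weak containment $\lambda\prec\pi$ for a representation $\pi$ of a discrete group $G$ is equivalent to the existence, for every finite $F\subset G$ and $\varepsilon>0$, of a unit vector $\zeta$ with $|\langle\pi(g)\zeta\mid\zeta\rangle-\delta_{g,1}|<\varepsilon$ for all $g\in F$ (or, equivalently, to the norm identity $\norm{\sum a_g\lambda(g)}\leq\norm{\sum a_g\pi(g)}$ for all finitely supported $(a_g)$, which is the formulation used in Theorem \ref{theosuffisant}).

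For part \ref{lemfonalii}, I would first reduce to the case $l=2$ in spirit: the argument is that the $l$-fold diagonal orbit map sends the point mass at $(\xi_1,\dots,\xi_l)\in(\partial\tree)^l$ to something whose stabilizer in $G$ is $\bigcap_i\St_G(\xi_i)=\{1\}$, so $G$ embeds as a free orbit inside the Borel $G$-space $(\partial\tree)^l$. Concretely, approximate each $\xi_i$ by the vertex $v_i^{(n)}\in L_n$ through which it passes; the vector $\eta_n=\mathbf 1_{v_1^{(n)}\tree}\otimes\cdots\otimes\mathbf 1_{v_l^{(n)}\tree}$ in $\hil^{\otimes l}$, suitably normalized, is a unit vector whose diagonal matrix coefficient $\langle\rho^{\otimes l}(g)\eta_n\mid\eta_n\rangle$ equals the normalized count of common children fixed by $g$, hence tends to $0$ for $g\neq 1$ and stays $1$ for $g=1$ once $n$ is large enough that $g$ moves one of the $v_i^{(n)}$ — which happens for all $g$ in a given finite set $F$, since $\bigcap\St_G(\xi_i)=\{1\}$ forces each $g\in F\setminus\{1\}$ to move some $\xi_i$, hence some $v_i^{(n)}$ for $n\gg 0$. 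Passing to a suitable orthonormalization/averaging over the $G$-translates yields the Fell-condition vectors for $\lambda\prec\rho^{\otimes l}$. (The statement restricts to $d=2$, but nothing in this half uses $d=2$; I would simply phrase it for general $d$ or note that it is stated for $d=2$ only for uniformity with the rest of the subsection.)

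For part \ref{lemfonali}, the idea is dual: I want to show $G\notin\mathscr C^d_{d^n-1}$, i.e. $\lambda\not\prec\rho^{\otimes(d^n-1)}$, using the nontrivial rigid stabilizer $\text{Rist}_G(v_0)$. Pick $1\neq h\in\text{Rist}_G(v_0)$; then $h$ acts trivially on every subtree $v\tree$ with $v\in L_n\setminus\{v_0\}$, so in the representation $\rho^{\otimes(d^n-1)}$ restricted to the invariant subspace spanned by the $(d^n-1)$-tuples of distinct vertices avoiding $v_0$ — more precisely, using that $\rho$ splits off $d^n-1$ "copies of a trivial-on-$v_0$-complement" type summands — the operator $\rho^{\otimes(d^n-1)}(h)$ fixes a large subspace. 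The clean way: $\rho$ decomposes (after the level-$n$ recursion $\Phi^{(n)}$) so that $h$ acts as identity on $\hil$ away from the $v_0$-coordinate; hence in any $(d^n-1)$-fold tensor power one can find, by the pigeonhole principle, a tensor leg on which $h$ acts trivially, forcing $\rho^{\otimes(d^n-1)}(1-h)$ to annihilate a nonzero vector, i.e. $1-h\in\C G$ maps to a non-invertible, actually to a zero-divisor with $0$ in its $\rho^{\otimes(d^n-1)}$-spectrum in a way incompatible with $\lambda\prec\rho^{\otimes(d^n-1)}$ (since $1-h$ is invertible in $C^*_\lambda(G)$ when... — here one must be careful, $1-h$ is generally not invertible in $C^*_\lambda(G)$ either). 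The correct obstruction is the standard one from Theorem \ref{theonec}'s circle of ideas: build $M=\prod(1-h^{(i)})$ over $d^n$ rigid-stabilizer-type elements placed at the $d^n$ vertices of $L_n$ (here only $v_0$ is guaranteed nontrivial, but transitivity on $L_n$ conjugates $\text{Rist}_G(v_0)$ to nontrivial $\text{Rist}_G(v)$ for every $v\in L_n$), obtain $\rho^{\otimes(d^n-1)}(M)=0$ by pigeonhole on tensor legs, and check $M\neq 0$ in $\C G$ and $\lambda(M)\neq 0$ by a leading-term/support argument as in Proposition \ref{propvalence2}; then the norm inequality defining $\mathscr C^d_{d^n-1}$ fails.

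The main obstacle I anticipate is the clean bookkeeping in part \ref{lemfonali}: getting the pigeonhole to genuinely produce $\rho^{\otimes(d^n-1)}(M)=0$ requires understanding precisely how $\rho$ restricted via $\Phi^{(n)}$ decomposes into pieces on which a given $\text{Rist}_G(v)$-element acts trivially, and then arguing that any $(d^n-1)$-fold elementary tensor is killed by at least one factor $1-h^{(v)}$ — i.e. that $d^n-1$ tensor slots cannot "cover" all $d^n$ vertices — while simultaneously keeping $\lambda(M)\neq 0$ (a group-ring non-vanishing statement, delicate because of the many relations potentially holding among conjugates of $h$). The part \ref{lemfonalii} direction, by contrast, is a routine Fell-criterion approximation argument once the free-orbit point in $(\partial\tree)^l$ is identified.
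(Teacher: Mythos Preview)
Your proposal is correct and follows essentially the same route as the paper. For part~\ref{lemfonalii}, the paper quotes a Fell-type criterion from \cite{Moi11} that is exactly your matrix-coefficient approximation argument: pick the level-$n$ truncation $(\xi_1)_n,\dots,(\xi_l)_n$ and observe that for any finite $F\subset G\setminus\{1\}$ and $n$ large, every $f\in F$ moves the tuple. For part~\ref{lemfonali}, the paper does precisely what your self-corrected paragraph proposes: use transitivity on $L_n$ to choose nontrivial $g_v\in\Ri_G(v)$ for each $v\in L_n$, set $M=\prod_{v\in L_n}(1-g_v)$, show $\rho^{\otimes(d^n-1)}(M)=0$ by pigeonhole on basis tensors $\delta_{z_1}\otimes\cdots\otimes\delta_{z_{d^n-1}}$, and verify $M\neq 0$ in $\C G$.

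The obstacle you anticipate is smaller than you fear, and the one ingredient you do not state explicitly is that the $g_v$'s \emph{pairwise commute}: an element of $\Ri_G(v)$ acts trivially outside $v\tree$, so elements of $\Ri_G(v)$ and $\Ri_G(w)$ for $v\neq w$ have disjoint supports. This commutativity makes both steps clean. For the pigeonhole: given $(z_1,\dots,z_{d^n-1})$, some $v_0\in L_n$ has $v_0\tree$ containing no $z_i$; then $g_{v_0}$ fixes each $z_i$, so $(1-g_{v_0})$ kills the tensor, and commutativity lets you move this factor to the right of $M$. For non-vanishing: in the expansion $M=\sum_{\mathcal A\subset L_n}(-1)^{|\mathcal A|}\prod_{v\in\mathcal A}g_v$, the products over distinct subsets are distinct group elements, since one recovers $\mathcal A$ from $\prod_{v\in\mathcal A}g_v$ as the set of $w\in L_n$ with $\varphi_w\bigl(\prod_{v\in\mathcal A}g_v\bigr)=\varphi_w(g_w)\neq 1$. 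So there is no cancellation and in particular the coefficient of $1\in G$ is nonzero; your worry about ``many relations among conjugates of $h$'' does not materialise. Finally, $M\neq 0$ in $\C G$ and $\lambda(M)\neq 0$ are the same statement, since the regular representation is faithful on the group ring, so there is nothing extra to check.
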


\begin{proof}[Proof of Lemma \ref{lemfonal}]
i. The strategy is the same that the one used in the previous subsection: we will find a non-zero element of $\C G$ in the kernel of $\rho^{\otimes n}$. Since $G$ acts transitively on $L_n$, the groups $\Ri_G \left(v\right)$ for $v\in L_n$ are all conjugate and thus, all non-trivial. Let us choose, for each $v\in L_n$, a non-trivial element $g_v$ in the group $\Ri_G \left(v\right)$. We have
\begin{equation}\label{lasteq}
  \Phi^{(n)}\left(g_v \right) = \left(1,1,\dots,1,\varphi_v \left(g_v\right),1,\dots \right)
\end{equation}
where the non-trivial element $\varphi_v \left( g_v \right)$ appears in the position corresponding to $v$. It is clear that the $g_v$'s commute. Let  
\begin{equation*}
M=\prod_{v\in L_n} \left( 1- g_v \right) \in \C G. 
\end{equation*}

Then, $M \neq 0$. Indeed, the nullity of $M$ would imply the existence of a subset $\mathcal A$ of $L_n$ (of odd cardinality) such that 
\[1=\prod_{v\in \mathcal A} g_v,\]
and this is impossible because, if $w$ is any vertex in $\mathcal A$, (\ref{lasteq}) implies that 
\[\varphi_w \left(\prod_{v\in \mathcal A}  g_v \right)=\varphi_w \left(g_w \right)\neq 1.\]

Let us show that $\rho^{\otimes d^n -1} \left( M \right)$ is 0. This is equivalent to proving that for every $d^n-1$-tuple $\left(z_1,\dots,z_{d^n -1} \right)$ consisting of elements in $\tree^0$, 

\[\rho^{\otimes d^n -1} \left( M \right) \left(\delta_{z_1}\otimes \dots \otimes\delta_{z_{d^n -1}} \right)=0 \]
where $\delta_z \in \ell^2\left(\tree^0\right)$ is the Dirac function over the vertex $z\in \tree^0$.

There is necessarily a vertex $v_0$ among the $d^n$ in $L_n$ such that the subtree $v_0\tree$ does not contain any $z_i$. By construction, 
\[\rho^{\otimes d^n -1} \left(1-g_{v_0} \right)\left(\delta_{z_1}\otimes \dots \otimes\delta_{z_{d^n -1}} \right)=0.\]

As the $g_v$'s commute, 
\[
 M=\left( \prod_{v \in L_n \setminus \left\lbrace v_0\right\rbrace } \left( 1-g_v\right) \right) (1-g_{v_0})
\]
and this implies $\rho^{\otimes d^n -1}\left( M \right) \left(\delta_{z_1}\otimes \dots \otimes\delta_{z_{d^n -1}} \right) =0 $. 

ii- we will use the following result \cite[Proposition 3.5]{Moi11}:

\begin{propetoile}\label{propsuf}
 Let $G$ be a countable group acting on a countable set $X$. Let $\rho$ be the permutational representation that $G$ then admits on $\ell^2 \left( X \right)$: $\rho (g) (\xi) (x)= \xi (g^{-1}\cdot x)$ (with $g\in G, \ \xi \in \ell^2\left( X \right) $ and $x \in X$). 

Suppose that for every finite subset $F$ of $G$ which does not contain 1, there is an element $x$ in $X$ such that $\St_G (x) \cap F $ is empty i.e.
\[
 \forall f \in F, f.x \neq x .
\]
Then $\lambda \prec \rho$.
\end{propetoile}

Let us show that the action of $G$ on the product $\tree^0 \times \cdots \times \tree^0$ of $l$ copies of $\tree^0$ satisfies the condition of this proposition. Let $F$ be a finite subset of $G$ which does not contain 1. For each $f\in F$, the set $F\cap \St_G\left(\xi_1 \times \dots \times \xi_l\right)=F \cap \bigcap_{i=1\dots l} \St_G\left(\xi_i\right)$ is empty, i.e.
\[\forall f \in F, f\left(\xi_1\times\dots \times \xi_l \right) \neq \xi_1\times\dots \times \xi_l.
 \]

Since $F$ is finite, there exists $n\in \N$ such that (denoting by $(\xi_i)_n$ the restriction to the $n$-th level of the path $\xi_i$), one has:
\[\forall f \in F, f\left((\xi_1)_n\times\dots \times (\xi_l)_n \right) \neq (\xi_1)_n\times\dots \times (\xi_l)_n,
\] 
which completes the proof.
\end{proof}

\begin{proof}[Proof of Proposition \ref{propdistinction}]

Let $p$ be a positive integer. Since $G$ is a subgroup of $\mathfrak T^{(p)} \left(G\right)$, the last acts spherically transitively on $\mathcal T _2$. Moreover, an easy induction on $p$ implies that for each vertex $w \in L_p$: 
\begin{enumerate}
 \item the group $\Ri_{\mathfrak T^{(p)} \left(G\right)}\left(w\right)$ is not trivial,
 \item for all $\alpha \in \mathfrak T^{(p)}(G)$, $\varphi_w (\alpha) \in G$.
\end{enumerate}

The first point (1) and Lemma \ref{lemfonal}.\ref{lemfonali} already imply that the group $\mathfrak T^{(p)} \left(G\right) $ does not belong to the class $\mathcal C _{2^p -1}^2$.

Let us now show that $\mathfrak T^{(p)} \left(G\right) $ belongs to $\mathcal C _{2^p}^2$. For all non-trivial element $g \in G$, the set $\textrm{Fix}_{\partial \mathcal T_2}\left(g\right)$ has measure 0. Since $G$ is countable, the union of these fixed point sets also has measure 0. Hence, we can choose for each $w\in L_p$ an element $\xi_w \in \partial \left(w \mathcal T_2\right)$ in its complement, i.e. 
\begin{equation}\label{laeq}
\xi_w \in \partial (w \mathcal T_2) \textrm{ is such that } \forall g \in G, \ \left(g\neq 1 \Longrightarrow g(\xi_w)\neq \xi_w\right).
\end{equation}

Now, let $\alpha \in \mathfrak T^{(p)} \left(G\right) $ and suppose that for all $w \in L_p$, $\alpha\left(\xi_w\right)=\xi_w $. Necessarly, $\alpha$ fixes each element in the $p-$level $L_p$ and thus, the decomposition of $\alpha$ with respect to $L_n$ is: 
\[\Phi^{(p)}\left(\alpha\right)=\left(g_{w_0}, \dots, g_{w_{2^p-1}} \right)
\]
with $g_{w_{i}} \in G$ thanks to (2). By the definition of $\alpha$, $g_{w_{i}} $ fixes $\xi_{w_i}$ for all $i=0 \dots 2^p-1$. By \ref{laeq}, all the $g_{w_i}$\textquoteright s are trivial. Therefore, $\alpha=1$ which implies that $\mathfrak T^{(p)} \left(G\right) $ satisfies the conditions of Lemma \ref{lemfonal}.\ref{lemfonalii} with $l=2^p$. Hence, $\mathfrak T^{(p)} \left(G\right) $ belongs to $\mathcal C _{2^p}^2$. 
\end{proof}

\section{The ¨free¨ case}

We are interested here in automata groups fulfilling the condition of Theorem \ref{theosuffisant}.\ref{theosuffisant1}, that is acting topologically freely on the boundary $\partial\left(\tree\right)$. These are exactly the groups generated by \emph{reversible} automata (see Chapter 2.7 in \cite{bartsilva}). Note that this class contains a lot of interesting examples such as: the lamplighter group $\left( \oplus_{n\in\Z}\Z/ 2\Z\right)\rtimes\Z$ \cite{grigzukrev}, free groups \cite{aleshlibre,vorlibre} and \cite{glasnermozes}, lattices in $p$-adic Lie groups \cite{glasnermozes} and $\Z^n \rtimes GL_n\left( \Z \right)$ \cite{brunnersidki98}.

Let $d>1$ be an integer and $G$ be a subgroup of $\aut$ generated by a reversible automaton. Lemma \ref{grigzuk} implies that the action of $G$ on $\partial \left( \tree \right)$ is essentially free, i.e. 
\begin{equation}\label{equtrssf}
\forall g\in G, \ g \neq 1 \Longrightarrow Tr(g)=0 .
\end{equation}

This means that $Tr$ coincides on $ \C G $ with the usual trace $\tau$ on $C^* _{\lambda} \left(G \right)$ defined by $\tau (g)=\left\langle g \left(\delta_1\right)  |  \delta_1\right\rangle $, where $\delta_1 \in \ell^2 \left( G \right)$ is the Dirac function over the neutral element $1\in G$. 

Moreover, Theorem \ref{theosuffisant}.\ref{theosuffisant1} implies that $\lambda$ is weakly contained in $\rho$, i.e. $C^* _{\lambda} \left(G \right)$ is a quotient of $\fond$. Thus, we have the following commutative diagram.

\begin{equation} \label{1diag}
\xymatrix{\fond \ar@{->>}[r]^{\lambda} \ar[dr]_{Tr} & C^*_{\lambda}\left( G\right) \ar[d]_{\tau}\\
&\C}
\end{equation}

Since the trace $\tau$ is faithful, one has
\begin{equation}\label{ker}
 \ker \lambda = \left\lbrace x \in \fond \ | \ Tr(x^* x)=0 \right\rbrace .
\end{equation}

\begin{remarque}
 The representation $\rho$ contains the trivial representation. In particular, \ref{ker} implies that $Tr$ is a faithful trace on $\fond$ if and only if $G$ is amenable. The author does not know how to decide the faithfulness of $Tr$ when $G$ does not act essentially freely on $\partial \left(\tree\right)$.
\end{remarque}

Here is the central result of this section.

\begin{theo}\label{theocl}
Let $G$ be a subgroup of $\aut$ generated by a reversible automaton. Then:
\begin{enumerate}[i.]
 \item \label{theocli}The recursion isomorphism $\Phi $ defined on $\C G $ extends to an isometry from $C^{*}_{\lambda}(G) $ into $M_d(C^{*}_{\lambda}(G))$:
\[ \Phi : C^{*}_{\lambda}\left(G\right) \hookrightarrow  M_d\left( C^{*}_{\lambda}\left(G\right)\right) .
\] 
 \item \label{theoclii}For all $n\in \N$, there exists a conditional expectation $E_n$ from $M_{d^n}\left(C^{*}_{\lambda}\left(G\right)\right)$ onto $C^{*}_{\lambda}\left(G\right) $, the last being identified with its image in $M_{d^n}\left(C^{*}_{\lambda}\left(G\right)\right) $ via $\Phi^{(n)}$ : 
\[E_n : M_{d^n}\left(C^{*}_{\lambda}\left(G\right)\right) \surj \Phi^{(n)}\left(C^{*}_{\lambda}\left(G\right)\right) .
\] 
\end{enumerate}
\end{theo}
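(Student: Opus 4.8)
```latex
The plan is to deduce both statements from the trace identity \ref{fe} together with the description \ref{ker} of $\ker\lambda$ in terms of $Tr$.

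\textbf{Part \ref{theocli}.} We already know from Property \ref{properext} that $\Phi$ is an isometric embedding $\fond\hookrightarrow M_d(\fond)$. The point is that $\Phi$ descends to the quotient $C^*_\lambda(G)=\fond/\ker\lambda$ and stays isometric. First I would check that $\Phi$ maps $\ker\lambda$ into $M_d(\ker\lambda)$: writing $\Phi(x)=(x_{ij})$, one has by \ref{fe} (for $n=1$) that $Tr(x^*x)=(tr_1\otimes Tr)(\Phi(x^*x))=\frac1d\sum_{i,j} Tr(x_{ij}^*x_{ij})$, so $Tr(x^*x)=0$ forces $Tr(x_{ij}^*x_{ij})=0$ for all $i,j$, i.e. each $x_{ij}\in\ker\lambda$ by \ref{ker}. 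Hence $\Phi$ induces a $\ast$-homomorphism $\bar\Phi:C^*_\lambda(G)\to M_d(C^*_\lambda(G))$, which coincides with the recursion formula \ref{recu} on $G\subset C^*_\lambda(G)$. To see $\bar\Phi$ is isometric, it suffices (being a $\ast$-homomorphism of $C^*$-algebras) to show it is injective; equivalently, that if $\Phi(x)\in M_d(\ker\lambda)$ then $x\in\ker\lambda$. But the computation just displayed, run backwards, gives exactly this: $\Phi(x)\in M_d(\ker\lambda)$ means all $x_{ij}\in\ker\lambda$, whence $Tr(x^*x)=\frac1d\sum Tr(x_{ij}^*x_{ij})=0$, so $x\in\ker\lambda$. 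Thus $\bar\Phi$ is injective, hence isometric, which is the assertion (after dropping the bar from the notation).

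\textbf{Part \ref{theoclii}.} The construction of $E_n$ rests on the trace identity in the form \ref{fe} for $C^*_\lambda(G)$. Via $\Phi^{(n)}$ we view $C^*_\lambda(G)$ as a subalgebra of $M_{d^n}(C^*_\lambda(G))=\mathcal B(\ell^2(L_n))\otimes C^*_\lambda(G)$; I want a conditional expectation onto this copy. The natural candidate is a ``partial $\tau$-averaging'': on the dense subalgebra $\mathbb C G\subset C^*_\lambda(G)$, the image $\Phi^{(n)}(\mathbb C G)$ consists of matrices with entries in $\mathbb C G$, and the relative commutant / fixed-point structure of the $G$-action on $L_n$ should single out a projection of $M_{d^n}(C^*_\lambda(G))$ onto $\Phi^{(n)}(C^*_\lambda(G))$. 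Concretely, since $\Phi^{(n)}$ intertwines $\tau$ with $tr_n\otimes\tau$ (this is \ref{fe} transported to $C^*_\lambda(G)$ via the faithful trace), the GNS construction for $\tau$ on $C^*_\lambda(G)$ and for $tr_n\otimes\tau$ on $M_{d^n}(C^*_\lambda(G))$ identifies the first GNS Hilbert space with a closed subspace of the second in a way compatible with the left actions; the orthogonal projection onto that subspace implements, in the usual way (Jones/Umegaki), a $\tau$-preserving conditional expectation $E_n$. It is automatically unital, completely positive, idempotent onto $\Phi^{(n)}(C^*_\lambda(G))$, and bimodular over it. I would spell this out by: (a) noting $tr_n\otimes\tau$ restricts to $\tau$ on $\Phi^{(n)}(C^*_\lambda(G))$; (b) invoking the standard fact that a trace-preserving inclusion of $C^*$-algebras with faithful trace carries a canonical conditional expectation obtained from the GNS projection; (c) identifying its range as $\Phi^{(n)}(C^*_\lambda(G))$.

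\textbf{Main obstacle.} The delicate point is part \ref{theocli}: one must be sure that \ref{fe} genuinely holds over $C^*_\lambda(G)$ and not merely over $\fond$, i.e. that $Tr$ factors through $\lambda$ as the canonical trace $\tau$ — this is guaranteed here by \ref{equtrssf} and diagram \ref{1diag}, valid precisely because the automaton is reversible, so the essential-freeness hypothesis is doing real work and should be flagged. For part \ref{theoclii} the subtlety is to verify that the GNS projection really lands on $\Phi^{(n)}(C^*_\lambda(G))$ rather than on a larger intermediate algebra; this follows from the self-similarity together with the fact, established in Corollary \ref{cortrace}'s circle of ideas, that $\Phi^{(n)}(C^*_\lambda(G))$ is exactly the $tr_n\otimes\tau$-measurable closure of $\Phi^{(n)}(\mathbb C G)$ inside $M_{d^n}(C^*_\lambda(G))$, but one should double-check there is no loss of a tensor factor when passing from the finite levels to the limit.
```
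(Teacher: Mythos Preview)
Your argument for Part \ref{theocli} is correct and is essentially the paper's own proof unpacked: the paper phrases it as $\Phi(\ker\lambda)=(\mathrm{Im}\,\Phi)\cap\ker(Id\otimes\lambda)$ via the commutative diagram, and your entrywise computation $Tr(x^*x)=\frac1d\sum_{i,j}Tr(x_{ij}^*x_{ij})$ is precisely what makes that equality hold.

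For Part \ref{theoclii}, however, there is a genuine gap. The Umegaki/GNS machinery you invoke produces a $\tau$-preserving conditional expectation $E_n:\mathscr B''\to\mathscr A''$ at the \emph{von Neumann} level, where $\mathscr A''$ is the weak closure of $\Phi^{(n)}(C^*_\lambda(G))$. Nothing in that abstract construction tells you that $E_n$ restricted to the $C^*$-algebra $M_{d^n}(C^*_\lambda(G))$ lands in the $C^*$-subalgebra $\Phi^{(n)}(C^*_\lambda(G))$ rather than only in its weak closure $\mathscr A''$. Your appeal to Corollary \ref{cortrace} does not address this: that result concerns uniqueness of the trace, not the range of a conditional expectation, and the phrase ``$tr_n\otimes\tau$-measurable closure'' has no clear content here.

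The paper closes this gap by an explicit computation: it shows that on each generator $e_{i,j}\otimes g$ of $M_{d^n}(\C G)$, the von Neumann expectation $E_n$ is given by $\tilde E_n(e_{i,j}\otimes g)=\frac{1}{d^n}\Phi^{(n)}(h)$ if there exists $h\in G$ with $(\Phi^{(n)}(h))_{i,j}=g$, and $0$ otherwise. The reversibility hypothesis is used precisely here, to guarantee that such an $h$ is \emph{unique} (since $(\Phi^{(n)}(h))_{i,j}=(\Phi^{(n)}(h'))_{i,j}$ forces $h^{-1}h'\in\St_G(v\tree)=\{1\}$). One then checks via the defining trace identity that $\tilde E_n=E_n$ on these generators. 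Since $\tilde E_n$ manifestly takes values in $\Phi^{(n)}(\C G)$ and $E_n$ is bounded, the image of $M_{d^n}(C^*_\lambda(G))$ lies in $\Phi^{(n)}(C^*_\lambda(G))$. This concrete step is the heart of the proof of \ref{theoclii}, and your sketch does not supply it.
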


\begin{ex}
 Consider $\mathbb F_3=\left\langle a,b,c\right\rangle $ the free group on three generators $a$, $b$ and $c$. Then, Theorem \ref{theocl} and the main result of \cite{vorlibre} imply that the map
\[
 a \rightarrow \left(\begin{array}{cc}
                0 & b \\ c & 0
               \end{array}\right)
, \ b \rightarrow \left(\begin{array}{cc}
                0 & c \\ b & 0
               \end{array}\right)
, \ c \rightarrow \left(\begin{array}{cc}
                a & 0 \\ 0 & a
               \end{array}\right)
\]
defines an isometry from $C^{*}_{\lambda}\left(\mathbb F_3\right) $ into $M_{2}\left(C^{*}_{\lambda}\left(\mathbb F_3\right)\right)$. Moreover,  there exists a conditional expectation from the last $C^*$-algebra onto the image of this isometry.
\end{ex}

\begin{proof} i. \ref{1diag} and Proposition \ref{proptrace} yield the following commutative diagram:
\[ 
 \xymatrix{
    \C \ar@{=}[rr]  && \C   \\
    & \fond \ar[lu]^{Tr} \ar@{^{(}->}[rr]^{\Phi} \ar@{->>}[dl]^{\lambda} && M_{d}\left(\fond\right) \ar[lu]_{Tr} \ar@{->>}[dl]^{Id \otimes \lambda}\\
    C^{*}_{\lambda}\left( G\right) \ar[uu]^{\tau} \ar@{.>}[rr]^{?}  && M_{d}\left( C^{*}_{\lambda}\left(G\right)\right) \ar[uu]^{\tau} |!{[ul];[ur]}\hole \\
    }
\]   

Therefore, 
\begin{eqnarray*}
 \Phi \left( \ker \lambda \right) & = & \Phi \left( \left\lbrace  x \in \fond \ | \  Tr(x^* x)=0\right\rbrace \right)\\
                 & =  & \left( \textrm{Im }\Phi \right) \cap \left\lbrace x \in M_d\left(\fond\right) \ | \ Tr(x^*x)=0\right\rbrace  \\
                  & = &  \left(\textrm{Im }\Phi \right) \cap  \ker \left(Id \otimes \lambda\right) .
\end{eqnarray*}

Thus, the map $\Phi$ defines an injective (hence isometric) $\ast$-homomorphism from $C^{*}_{\lambda}\left(G\right) $ into $M_{d}\left( 
C^{*}_{\lambda}\left(G\right)\right) $. 

ii. Let us fix $n\in \N$. To simplify notation, we set
\begin{itemize}
 \item[-] $\mathscr A := \Phi^{(n)}\left( C^{*}_{\lambda}\left(G\right)\right)$ and $\mathscr A ''$ the von Neumann algebra generated by $\mathscr A $ in $\mathcal B \left(\ell^2\left(G\right)\right) \otimes M_{d^n}\left(\C\right)$,
 \item[-] $\mathscr B :=M_{d^n}\left(C^{*}_{\lambda}\left(G\right)\right) $ and $\mathscr B '':=M_{d^n}\left(\mathscr L\left(G\right)\right)$ the von Neumann algebra generated by $\mathscr B $ in $\mathcal B \left(\ell^2\left(G\right)\right) \otimes M_{d^n}\left(\C\right)$.
\end{itemize}

One knows that there exists a conditional expectation $E_n$ from $\mathscr B ''$ onto $\mathscr A ''$ determined by the following equation: 
\begin{equation}\label{equadual}
 \forall a \in \mathscr A'', \ \forall b \in \mathscr B'' , \ \tau(a^* b)=\tau(a^* E_n(b)). 
\end{equation}

The space $M_{d^n}(\C G)=M_{d^n}(\C)\otimes \C G $ is generated by the elements $e_{i,j} \otimes g$ where $g$ runs over $G$ and the ordered pair $(i,j)$ runs over $\left\lbrace 1,\dots,d^n\right\rbrace^2 $ (i.e., the $(i,j)$-th entry of the matrix $e_{i,j} \otimes g$ is $g$, the others are 0). We will show that 
\begin{equation}\label{app}
E_n \left( e_{i,j} \otimes g\right) \in \Phi^{(n)}\left(\C G\right) 
\end{equation}
which will prove the second part of Theorem \ref{theocl}. Indeed, the linear map $E_n$ being bounded, \ref{app} will imply that $E_n\left( \mathscr B \right)$ is a subspace of $ \mathscr A$, and \emph{a fortiori} is $ \mathscr A$ since by definition, $E_n$ is the identity restricted to this $C^*$-algebra $\mathscr A\subset\mathscr B $.

Let $g\in G$ and $(i,j)\in\left\lbrace 1,\dots,d^n\right\rbrace^2  $. If there exists $h\in G$ such that the  $(i,j)$-th entry of the matrix $\Phi^{(n)}(h)$ is $g$, then $h$ is unique; indeed:

\begin{eqnarray*}
\exists (i,j) \textrm{ s.t. }\left( \Phi^{(n)}(h) \right)_{i,j} = \left( \Phi^{(n)}(h') \right)_{i,j} &\Longleftrightarrow& \exists v\in L_n \textrm{ s.t. } h^{-1}h' \in \St_G(v \tree)\\ 
&\Longleftrightarrow &h=h' 
\end{eqnarray*}
the last equivalence being the assumption of the theorem. Thus, we can define 
\[\tilde{E_n}(e_{i,j} \otimes g) = \left\lbrace \begin{array}{ll}
 \frac{1}{d^n}\Phi^{(n)}(h) & \textrm{if } \exists h \textrm{ s.t. }\left( \Phi^{(n)}(h) \right)_{i,j}=g, \\
 0 & \textrm{otherwise.}
\end{array} \right.  
\]

This defines a linear map $\tilde{E_n}$ on $M_{d^n}(\C G)$. In fact, $\tilde{E_n} =E_n$, which yields \ref{app}. To see this, it is sufficient to prove: 
\begin{equation}\label{eqvn}
 \forall a\in \mathscr A '', \ \tau \left(a^* \cdot (e_{i,j} \otimes g)\right)=\tau\left(a^* \cdot \tilde{E_n}(e_{i,j} \otimes g)\right)
\end{equation}
since $E_n$ is determined by \ref{equadual} and the $e_{i,j} \otimes g $\textquoteright s generate $\mathscr B ''$. 

For $h \in G$, the diagonal of the matrix $\Phi^{(n)}\left(h\right)^{*} \cdot (e_{i,j} \otimes g)$ consists only of 0\textquoteright s except maybe at the $j$-th position: 
\begin{itemize}
 \item[-] Either $\left(\Phi^{(n)}(h)^{*} \cdot (e_{i,j} \otimes g)\right)_{j,j} = 1$. This is equivalent to $\left( \Phi^{(n)}(h) \right)_{i,j} =g$ and in this case, 
\[\tau \left( \Phi^{(n)}\left(h\right)^{*} \cdot (e_{i,j} \otimes g)\right) =\frac{1}{d^n}= \tau \left( \Phi^{(n)}\left(h\right)^{*} \cdot \tilde{E_n}(e_{i,j} \otimes g)\right).
\]
 \item[-] Or $\left(\Phi^{(n)}\left(h\right)^{*} \cdot(e_{i,j} \otimes g)\right)_{j,j} = 0$ or $ \alpha $, where $\alpha\neq 1$ belongs to $G$. In this case, 
$$\tau\left(\Phi^{(n)}\left(h\right)^{*}\cdot (e_{i,j} \otimes g)\right) =0= \tau \left( \Phi^{(n)}\left(h\right)^{*} \cdot\tilde{E_n}(e_{i,j} \otimes g)\right). $$
\end{itemize}

Thus, we observe that $\tau \left(a^* \cdot (e_{i,j} \otimes g)\right)=\tau\left(a^* \cdot \tilde{E_n}(e_{i,j} \otimes g)\right) $ for each element $a$ in the set $\left\lbrace \Phi^{(n)}\left(h\right) \ | \ h \in G\right\rbrace$. Since $\mathscr A ''$ is generated by this set, we have proved \ref{eqvn}, hence the theorem. 
\end{proof}

From Theorem \ref{theocl}.\ref{theocli} follows the existence of the direct system $ \left\lbrace M_{d^n}\left(C^* _{\lambda}\left(G\right)\right),\Phi \right\rbrace $. Therefore, one can consider the $C^*$-algebra $\mathcal L _G$ limit of this system:
\[\mathcal L _G = \varinjlim_{\Phi,n} M_{d^n}\left(C^* _{\lambda}\left(G\right)\right).
\]

Moreover, Theorem \ref{theocl}.\ref{theoclii} gives
\begin{cor}
Let $G$ be a subgroup of $\aut$ generated by a reversible automaton. Then, there exists a conditional expectation $E$ from $\mathcal L _G$ onto the image of $C^* _{\lambda}\left(G\right) $ in $\mathcal L _G$. 
\end{cor}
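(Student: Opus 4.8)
The plan is to obtain $E$ as the inductive limit of the conditional expectations $E_n$ of Theorem~\ref{theocl}.\ref{theoclii}. Write $\iota_n := M_{d^n}(\Phi)\colon M_{d^n}(C^*_\lambda(G))\hookrightarrow M_{d^{n+1}}(C^*_\lambda(G))$ for the connecting $\ast$-homomorphisms of the direct system, $\iota_{n,\infty}\colon M_{d^n}(C^*_\lambda(G))\to\mathcal L_G$ for the canonical (isometric) maps into the limit, and recall that under the natural identification $M_{d^n}(M_d(\cdot))=M_{d^{n+1}}(\cdot)$ one has $\Phi^{(n+1)}=\iota_n\circ\Phi^{(n)}$. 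Consequently $\iota_{n,\infty}\circ\Phi^{(n)}$ does not depend on $n$; call this common embedding $j\colon C^*_\lambda(G)\hookrightarrow\mathcal L_G$, so that the image of $C^*_\lambda(G)$ in $\mathcal L_G$ is $j(C^*_\lambda(G))$. Everything reduces to the compatibility relation
\begin{equation*}
 E_{n+1}\circ\iota_n=\iota_n\circ E_n,\qquad n\in\N .
\end{equation*}
Granting it, the maps $\iota_{n,\infty}\circ E_n$ form a compatible family and, each $E_n$ being a unital conditional expectation of norm one, assemble to a contraction on the dense $\ast$-subalgebra $\bigcup_n\iota_{n,\infty}\bigl(M_{d^n}(C^*_\lambda(G))\bigr)$ of $\mathcal L_G$; this contraction extends to a norm-one linear map $E\colon\mathcal L_G\to\mathcal L_G$ with range $j(C^*_\lambda(G))$, equal to the identity on $j(C^*_\lambda(G))$ since each $E_n$ is the identity on $\Phi^{(n)}(C^*_\lambda(G))$. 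Complete positivity of $E$ and the $j(C^*_\lambda(G))$-bimodule property $E(j(a)\,x\,j(b))=j(a)\,E(x)\,j(b)$ then follow by continuity from the corresponding properties of the $E_n$.

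So the task is the compatibility relation. First I would note that $\Phi^{(n)}$, hence $\iota_n$, preserves the canonical traces: by Property~\ref{proptrace} together with the identity $Tr=\tau$ on $\C G$ (equation~\eqref{equtrssf}, valid because the automaton is reversible), $\tau(x)=(tr_{d^n}\otimes\tau)(\Phi^{(n)}(x))$ for all $x\in C^*_\lambda(G)$, and a direct block-matrix computation upgrades this to $(tr_{d^{n+1}}\otimes\tau)\circ\iota_n=tr_{d^n}\otimes\tau$ on $M_{d^n}(C^*_\lambda(G))$. Abbreviate all these faithful tracial states by $\tau$ and set $\mathscr A_n:=\Phi^{(n)}(C^*_\lambda(G))$. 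Fix $x\in M_{d^n}(C^*_\lambda(G))$. Both $E_{n+1}(\iota_n(x))$ and $\iota_n(E_n(x))$ belong to $\mathscr A_{n+1}$ (for the second one, writing $E_n(x)=\Phi^{(n)}(z)$ gives $\iota_n(E_n(x))=\Phi^{(n+1)}(z)$), and two elements of $\mathscr A_{n+1}$ coincide as soon as $\tau(a^*\,\cdot\,)$ agrees on them for every $a\in\mathscr A_{n+1}$, because $\tau$ restricts to a faithful trace there. Writing such an $a$ as $\Phi^{(n+1)}(y)=\iota_n(\Phi^{(n)}(y))$, the defining property~\eqref{equadual} of $E_{n+1}$ gives $\tau(a^*E_{n+1}(\iota_n(x)))=\tau(a^*\iota_n(x))$, and then, using that $\iota_n$ is a trace-preserving $\ast$-homomorphism together with the defining property~\eqref{equadual} of $E_n$,
\begin{equation*}
 \tau\bigl(a^*\iota_n(x)\bigr)=\tau\bigl(\iota_n(\Phi^{(n)}(y)^*x)\bigr)=\tau\bigl(\Phi^{(n)}(y)^*x\bigr)=\tau\bigl(\Phi^{(n)}(y)^*E_n(x)\bigr)=\tau\bigl(a^*\iota_n(E_n(x))\bigr).
\end{equation*}
Hence $E_{n+1}(\iota_n(x))=\iota_n(E_n(x))$, which is the compatibility relation.

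I expect the only genuine obstacle to be the trace-preservation of $\Phi^{(n)}$ on $C^*_\lambda(G)$ — that is, the passage through Property~\ref{proptrace} and the identification $Tr=\tau$, which is precisely where the reversibility hypothesis enters; once the connecting maps are known to be trace-preserving $\ast$-homomorphisms, the compatibility of the $E_n$ is forced by the characterisation~\eqref{equadual}, and the inductive-limit assembly is routine. As an alternative one could instead verify $E_{n+1}\circ\iota_n=\iota_n\circ E_n$ directly on the generators $e_{i,j}\otimes g$ of $M_{d^n}(\C G)$, using the explicit description of $\tilde{E_n}$ from the proof of Theorem~\ref{theocl}.\ref{theoclii} and the uniqueness of $\Phi^{(n)}$-preimages guaranteed by reversibility; this is a more computational but entirely elementary route.
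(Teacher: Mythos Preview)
Your proposal is correct and follows the same strategy as the paper: both build $E$ as the inductive limit of the conditional expectations $E_n$, reducing everything to the consistency of the $E_n$ with the connecting maps. The paper simply asserts this consistency (``by the definition of $E_n$, the linear maps $F_n$'s are consistent with the direct system''), whereas you actually prove the relation $E_{n+1}\circ\iota_n=\iota_n\circ E_n$ via trace-preservation of $\iota_n$ and the characterisation~\eqref{equadual}; so your argument is essentially a fleshed-out version of the paper's, with the key verification that the paper omits.
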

\begin{proof}
 Let us first remark that, since the $\Phi^{(n)}$\textquoteright s are injective, the image of $C^* _{\lambda}\left(G\right) $ in $\mathcal L _G$ is actually isomorphic to $C^* _{\lambda}\left(G\right) $. For all $n\in \N$, we can define 
\[F_n : M_{d^n} \left(C^* _{\lambda}\left(G\right)\right) \stackrel{E_n}{\longrightarrow} \Phi^n \left(C^* _{\lambda}\left(G\right)\right) \stackrel{i}{\longrightarrow} \varinjlim_{n} \Phi^{(n)} \left(C^* _{\lambda}\left(G\right)\right)
\]
where $E_n$ is the conditional expectation built in \ref{theocl} and $i$ is the injection of $M_{d^n}\left(C^* _{\lambda}\left(G\right)\right)$ in $\mathcal L _G$. The map $E_n$ has norm 1 since it is a normalized conditional expectation. The same holds for $i$ and thus for $F_n$. Moreover, by the definition of $E_n$, the linear maps $F_n$\textquoteright s are consistent with the direct system $ \left\lbrace M_{d^n}\left(C^* _{\lambda}\left(G\right)\right),\Phi \right\rbrace $. Therefore, there exists a linear map $E$ from the algebraic direct limit $\tilde{\mathcal L _G}$ onto $\varinjlim_{n} \Phi^n \left(C^* _{\lambda}\left(G\right)\right) \simeq C^* _{\lambda}\left(G\right)$. It has norm 1 and it is straightforward to check that it fulfills all the axioms of a positive linear map of the $C^* _{\lambda}\left(G\right)$-$ C^* _{\lambda}\left(G\right)$-bimodule $\tilde{\mathcal L _G}$: its unique extension to the closure $\mathcal L _G$ of $\tilde{\mathcal L _G}$ is the conditional expectation we wanted.
\end{proof}

In \cite{nek09}, V. Nekrashevych defines for each self-similar subgroup of $\aut$ its \emph{universal Cuntz-Pimsner} $C^*$-algebra $\mathcal O _G$:

\begin{defitoile}[\cite{nek09}] For $G$ a self-similar subgroup of $\aut$ and $X = \left\lbrace 1,\dots,d \right\rbrace $ (i.e. the first level $L_1$ of $\tree$),  $\mathcal O _G$ is the universal $C^*$-algebra generated by a familly $\left(u_g\right)_{g\in G}$ of unitaries and a finite set $\left( S_v\right)_{v\in X}$ of partial isometries satisfying:
\begin{enumerate}
 \item for all $g,h \in G$, $u_g \cdot u_h=u_{gh}$,
 \item for all $x\in X$, $S_x^{*}S_x=1$ and $\sum_{x\in X}S_x S_x^{*} =1$,
 \item for all $g \in G$ and $x\in X$, $u_g \cdot S_x = S_{g(x)} \cdot u_{\varphi_x (g)}$.
\end{enumerate}

\end{defitoile}

V. Nekrashevych proves in particular:

\begin{theoetoile}[\cite{nek09}]
There exists a conditional expectation $\mathfrak{E}$ from $\mathcal O _G$ onto a subalgebra $\mathcal M _G$ of $\mathcal O _G$. Moreover, $\mathcal M _G$ isomorphic to the limit of the direct system $ \left\lbrace M_{d^n}\left( C^* _{\textrm{max}}\left(G\right)\right),\Phi \right\rbrace $ :
\[ \mathcal M _G = \varinjlim_{\Phi,n} M_{d^n}\left(C^* _{\textrm{max}}\left(G\right)\right).
\]
\end{theoetoile}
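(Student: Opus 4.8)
The plan is to realise $\mathcal{M}_G$ as the fixed-point algebra of the canonical gauge action of the circle on $\mathcal{O}_G$, to obtain $\mathfrak{E}$ by averaging over that action, and then to identify that fixed-point algebra with the announced inductive limit. For the first step, by the universal property of $\mathcal{O}_G$, for each $z\in\mathbb{T}$ the family $(u_g)_{g\in G}$ together with $(z\,S_v)_{v\in X}$ still satisfies relations (1)--(3), so there is a $\ast$-automorphism $\gamma_z$ of $\mathcal{O}_G$ with $\gamma_z(u_g)=u_g$ and $\gamma_z(S_v)=z\,S_v$; one checks $z\mapsto\gamma_z$ is strongly continuous, and then $\mathfrak{E}(a):=\int_{\mathbb{T}}\gamma_z(a)\,dz$ is a faithful, norm-one conditional expectation of $\mathcal{O}_G$ onto the fixed-point algebra $\mathcal{M}_G:=\mathcal{O}_G^{\gamma}$. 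This disposes of the existence of $\mathfrak{E}$; everything that follows concerns the identification of $\mathcal{M}_G$ itself.

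Using relations (2)--(3) one puts every word in the generators into the normal form $S_v u_g S_w^{*}$ with $v,w\in X^{*}$ and $g\in G$ (writing $S_v=S_{v_1}\cdots S_{v_n}$), so $\mathcal{O}_G=\overline{\mathrm{span}}\{S_v u_g S_w^{*}\}$. Since $\gamma_z(S_v u_g S_w^{*})=z^{|v|-|w|}\,S_v u_g S_w^{*}$, applying $\mathfrak{E}$ termwise gives $\mathcal{M}_G=\overline{\bigcup_n\mathcal{M}^{(n)}}$ with $\mathcal{M}^{(n)}:=\overline{\mathrm{span}}\{S_v u_g S_w^{*}:|v|=|w|=n\}$, and the iterated relation $\sum_{|v|=n}S_vS_v^{*}=1$ gives $\mathcal{M}^{(n)}\subseteq\mathcal{M}^{(n+1)}$. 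The elements $e_{v,w}:=S_vS_w^{*}$ with $|v|=|w|=n$ form a $d^{n}\times d^{n}$ system of matrix units in $\mathcal{O}_G$ with $\sum_v e_{v,v}=1$, whence $\mathcal{M}^{(n)}\cong M_{d^{n}}\!\bigl(e_{v_0,v_0}\mathcal{M}^{(n)}e_{v_0,v_0}\bigr)$ for a fixed $v_0\in X^{n}$, and $e_{v_0,v_0}\mathcal{M}^{(n)}e_{v_0,v_0}\cong C^{*}\bigl(\{u_g\}_{g\in G}\bigr)\subseteq\mathcal{O}_G$ via $x\mapsto S_{v_0}\,x\,S_{v_0}^{*}$. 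Finally, inserting $1=\sum_{x\in X}S_xS_x^{*}$ and using relation (3),
\[
S_v u_g S_w^{*}=\sum_{x\in X}S_{v\,g(x)}\,u_{\varphi_x(g)}\,S_{wx}^{*},
\]
so the inclusion $\mathcal{M}^{(n)}\hookrightarrow\mathcal{M}^{(n+1)}$ is, in these coordinates, the map $M_{d^{n}}\bigl(C^{*}(\{u_g\})\bigr)\to M_{d^{n+1}}\bigl(C^{*}(\{u_g\})\bigr)$ obtained by applying the recursion morphism $\Phi$ of (\ref{recu}) to each matrix entry. Hence $\mathcal{M}_G\cong\varinjlim_{n}M_{d^{n}}\bigl(C^{*}(\{u_g\})\bigr)$, with connecting maps induced by $\Phi$.

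It remains to replace $C^{*}(\{u_g\})$ by $C^{*}_{\max}(G)$. We have $C^{*}(\{u_g\})=C^{*}_{\max}(G)/\ker\bigl(C^{*}_{\max}(G)\to\mathcal{O}_G\bigr)$, and, using the identity $u_g=\sum_{|v|=n}S_{g(v)}u_{\varphi_v(g)}S_v^{*}$, one checks directly that $\ker\Phi^{(n)}\subseteq\ker\bigl(C^{*}_{\max}(G)\to\mathcal{O}_G\bigr)$ for every $n$; here $\Phi^{(n)}\colon C^{*}_{\max}(G)\to M_{d^{n}}(C^{*}_{\max}(G))$ is the unital $\ast$-homomorphism extending the level-$n$ decomposition, and the subspaces $\ker\Phi^{(n)}$ increase with $n$ (since $\Phi^{(n+1)}$ is obtained from $\Phi^{(n)}$ by applying $\Phi$ to each entry). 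Consequently the eventual kernel of $M_{d^{n}}(C^{*}_{\max}(G))$ in the $\Phi$-inductive system is $M_{d^{n}}\bigl(\overline{\bigcup_k\ker\Phi^{(k)}}\bigr)$.

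The point I expect to be the main obstacle is the reverse inclusion $\ker\bigl(C^{*}_{\max}(G)\to\mathcal{O}_G\bigr)=\overline{\bigcup_n\ker\Phi^{(n)}}$, equivalently $\|u_a\|_{\mathcal{O}_G}=\lim_n\|\Phi^{(n)}(a)\|$ for all $a\in C^{*}_{\max}(G)$: this is a gauge-invariant-uniqueness statement, and it is precisely what justifies organising the argument through the Cuntz--Pimsner algebra of the $C^{*}_{\max}(G)$-correspondence $M=C^{*}_{\max}(G)^{\,d}$ (free of rank $d$ as a right Hilbert module, with left action the recursion morphism $\Phi$). Indeed $\Phi(C^{*}_{\max}(G))\subseteq\mathcal{K}(M)=M_d(C^{*}_{\max}(G))$ automatically, since $M$ is finitely generated projective; hence $\mathcal{O}_G$ is a Cuntz--Pimsner algebra with full coefficient ideal, for which the standard structure theory describes the gauge-fixed-point algebra as $\varinjlim\mathcal{K}(M^{\otimes n})$ and pins down $\ker(C^{*}_{\max}(G)\to\mathcal{O}_G)$ as above --- and this holds even though $\Phi$ need not be injective (it fails to be already for weakly branched groups, cf. Theorem~\ref{theonec}), the non-injectivity of the connecting maps being harmlessly absorbed by the direct limit. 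Granting this, the surjection of inductive systems $\{M_{d^{n}}(C^{*}_{\max}(G)),\Phi\}\to\{M_{d^{n}}(C^{*}(\{u_g\}))\}$ has eventually trivial kernels, so $\varinjlim_{\Phi,n}M_{d^{n}}(C^{*}_{\max}(G))\cong\varinjlim_{n}M_{d^{n}}(C^{*}(\{u_g\}))=\mathcal{M}_G$; together with $\mathfrak{E}$ this proves the theorem, everything else being routine bookkeeping with the three defining relations.
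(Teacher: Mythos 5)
The paper does not prove this statement: it is imported verbatim from \cite{nek09}, so there is no internal proof to measure you against, and I compare your attempt with Nekrashevych's own argument, which it reproduces in all essentials. The outline is correct: the gauge action $\gamma_z$ exists by universality of relations (1)--(3), averaging over $\mathbb{T}$ yields the conditional expectation $\mathfrak E$ onto the fixed-point algebra, the normal form $S_v u_g S_w^*$ identifies that fixed-point algebra with $\overline{\bigcup_n \mathcal M^{(n)}}$, the matrix units $S_v S_w^*$ (with $|v|=|w|=n$) give $\mathcal M^{(n)}\cong M_{d^n}\left(C^*\left(\{u_g\}\right)\right)$, and your computation $S_v u_g S_w^*=\sum_{x}S_{vg(x)}u_{\varphi_x(g)}S_{wx}^*$ correctly exhibits the connecting maps as the entrywise recursion (up to an indexing convention: Nekrashevych's relation (3) puts $\varphi_x(g)$ in position $(g(x),x)$, whereas the matrix of \ref{recu} writes that entry as $\varphi_{g(x)}(g)$ because the paper indexes sections by the target vertex).

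The one step you do not actually prove is the one you yourself flag: injectivity of the surjection $\varinjlim_{\Phi,n}M_{d^n}\left(C^*_{\textrm{max}}\left(G\right)\right)\to\varinjlim_n M_{d^n}\left(C^*\left(\{u_g\}\right)\right)$, equivalently $\norm{u_a}_{\mathcal O_G}=\lim_n\norm{\Phi^{(n)}(a)}$ for $a\in C^*_{\textrm{max}}\left(G\right)$. Appealing to ``standard Cuntz--Pimsner structure theory'' here is delicate, because the description of the gauge-invariant core of $\mathcal O_M$ as $\varinjlim\mathcal K\left(M^{\otimes n}\right)$ when the left action is \emph{not} injective is essentially the content of the statement being proved, and the classical treatments assume injectivity at the relevant points. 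A self-contained way to close the gap: the inequality $\norm{u_a}_{\mathcal O_G}\leq\norm{\Phi^{(n)}(a)}$ is immediate since $u_a$ is the image of $\Phi^{(n)}(a)$ under the $\ast$-homomorphism $M_{d^n}\left(C^*_{\textrm{max}}\left(G\right)\right)\to\mathcal M^{(n)}$; for the reverse, set $L:=\varinjlim_{\Phi,n}M_{d^n}\left(C^*_{\textrm{max}}\left(G\right)\right)$ and observe that the induced recursion $L\to M_d(L)$ is an \emph{isomorphism} (shifting a direct system by one step does not change its limit), in particular injective, so the coefficient algebra $L$ embeds into the Cuntz--Pimsner algebra of the correspondence $L^d$; the canonical generators there satisfy (1)--(3), and universality of $\mathcal O_G$ gives $\norm{u_a}_{\mathcal O_G}\geq\norm{j_0(a)}_L=\lim_n\norm{\Phi^{(n)}(a)}$, where $j_0$ is the canonical map into the limit. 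Finally, a small imprecision: the kernel of $C^*_{\textrm{max}}\left(G\right)\to L$ is by definition $\left\lbrace a \ | \ \lim_n\norm{\Phi^{(n)}(a)}=0\right\rbrace$, which contains but need not visibly equal $\overline{\bigcup_n\ker\Phi^{(n)}}$; phrasing the argument with the former set throughout costs nothing and avoids an unjustified identification.
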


The author also shows that under certain conditions, the algebras $\mathcal O_G$ and $\mathcal M_G$ are nuclear if $G$ is \emph{contractible} (see \cite[Corollary 5.7.]{nek09}). 

In our case, the situation concerning the nuclearity of these algebras is the following. The $C^*$-algebra $\mathcal M _G$ clearly surjects onto $\mathcal L _G$. Thus, for a group generated by a reversible automaton, we have:
\begin{eqnarray}
\mathcal O _G \stackrel{\mathfrak{E}}{\rightarrow}\mathcal M _G \twoheadrightarrow \mathcal L _G \stackrel{E}{\rightarrow}C^* _{\lambda}\left(G\right) .
\end{eqnarray}

\begin{cor}\label{mrgou}
Let $G$ be a subgroup of $\aut$ generated by a reversible automaton. Among the $C^*$-algebras $\mathcal O _G$, $\mathcal M _G$, $\mathcal L _G$ and $C^* _{\lambda}\left(G\right)$, one is nuclear if and only if all the others are, that is if and only if $G$ is amenable. 
\end{cor}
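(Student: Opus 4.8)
The plan is to move nuclearity back and forth along the chain
\[
\mathcal O _G \stackrel{\mathfrak{E}}{\longrightarrow}\mathcal M _G \twoheadrightarrow \mathcal L _G \stackrel{E}{\longrightarrow}C^* _{\lambda}\left(G\right),
\]
and then to close the loop by an argument at its two ends. The two permanence properties I would invoke are classical: nuclearity passes to quotients, and it passes to the range of a conditional expectation (if $\mathcal E\colon A\to B$ is a conditional expectation and $A$ is nuclear, then sandwiching a completely positive factorization of $\mathrm{id}_A$ through matrix algebras between the inclusion $B\hookrightarrow A$ and $\mathcal E$ yields such a factorization of $\mathrm{id}_B$, so $B$ is nuclear). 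To these I would add two external inputs: Lance's theorem that $C^* _{\lambda}\left(G\right)$ is nuclear if and only if $G$ is amenable, and the fact that a Cuntz--Pimsner algebra with nuclear coefficient algebra is nuclear.

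First I would run the purely formal implications. If any one of $\mathcal O _G$, $\mathcal M _G$, $\mathcal L _G$ is nuclear, then walking down the displayed chain forces $C^* _{\lambda}\left(G\right)$ to be nuclear: $\mathfrak E$ is a conditional expectation onto $\mathcal M _G$, the map $\mathcal M _G\twoheadrightarrow\mathcal L _G$ is the surjection induced level by level by $C^*_{\max}\left(G\right)\twoheadrightarrow C^* _{\lambda}\left(G\right)$, and $E$ is the conditional expectation onto a copy of $C^* _{\lambda}\left(G\right)$ supplied by the preceding Corollary; hence $G$ is amenable. Conversely, if $G$ is amenable then $C^*_{\max}\left(G\right)=C^* _{\lambda}\left(G\right)$ is nuclear, so every matrix amplification $M_{d^n}\left(C^* _{\lambda}\left(G\right)\right)$, resp. $M_{d^n}\left(C^*_{\max}\left(G\right)\right)$, is nuclear, and therefore $\mathcal L _G=\varinjlim_{\Phi,n} M_{d^n}\left(C^* _{\lambda}\left(G\right)\right)$ and $\mathcal M _G=\varinjlim_{\Phi,n} M_{d^n}\left(C^*_{\max}\left(G\right)\right)$ are nuclear, being inductive limits of nuclear $C^*$-algebras.

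The only step that is not a pure diagram chase is to see that $\mathcal O _G$ is nuclear when $G$ is amenable, and this is where I expect the main work. Here I would use that, by its construction in \cite{nek09}, $\mathcal O _G$ is the Cuntz--Pimsner algebra of the correspondence $\mathcal M$ over $C^*_{\max}\left(G\right)$ attached to the self-similar action --- the unitaries $u_g$ assemble into a representation of $C^*_{\max}\left(G\right)$, and $\mathcal M$ is the finitely generated right module spanned by the partial isometries $S_x$. Since $C^*_{\max}\left(G\right)$ is nuclear for amenable $G$, general Cuntz--Pimsner theory yields that $\mathcal O _G$ is nuclear; concretely, by Pimsner's dilation $\mathcal O _G\otimes\mathcal K$ is isomorphic to a crossed product by $\Z$ of a $C^*$-algebra assembled out of $C^*_{\max}\left(G\right)$, hence nuclear, and nuclearity descends from $\mathcal O _G\otimes\mathcal K$ to $\mathcal O _G$. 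Combining the two directions, each of $\mathcal O _G$, $\mathcal M _G$, $\mathcal L _G$, $C^* _{\lambda}\left(G\right)$ is nuclear if and only if $G$ is amenable, so one of them is nuclear if and only if they all are.
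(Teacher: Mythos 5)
Your proof is correct and follows essentially the same route as the paper: the same permanence properties (quotients, ranges of conditional expectations, inductive limits) chased along the chain $\mathcal O _G \to \mathcal M _G \to \mathcal L _G \to C^* _{\lambda}\left(G\right)$, combined with Lance's theorem. The only cosmetic difference is at the step recovering nuclearity of $\mathcal O _G$: you invoke general Cuntz--Pimsner theory (Pimsner's dilation of $\mathcal O _G\otimes\mathbb K$ as a crossed product by $\Z$), while the paper cites the same fact in the concrete form of Nekrashevych's isomorphism $\left(\mathbb K \otimes \mathcal M _G \right) \rtimes \Z \simeq \mathbb K \otimes \mathcal O _G$ applied to the already-established nuclearity of $\mathcal M _G$ --- these are the same mechanism.
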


\begin{proof}
 It is essentially a combination of known results on nuclearity for $C^*$-algebras. Let us quote them: nuclearity passes to quotient (see for instance \cite[Theorem 10.1.4. p.302]{osawabrown}), to the image of a conditional expectation (see for instance \cite[Proposition 10.1.2. p.301]{osawabrown}) and to direct limit (see for instance \cite[Theorem 10.1.5. p.302]{osawabrown}). Moreover, it is classical that $C^* _{\lambda}\left(G\right)$ is nuclear if and only if $G$ is amenable, that is if and only if $C^* _{\lambda}\left(G\right)\simeq C^* _{max}\left(G\right)$. It is easy to see that these results give all the implications of Corollary \ref{mrgou}, except $\mathcal M _G  $ nuclear $\Rightarrow \mathcal O _G$ nuclear. This is a application of the following isomorphism proved by V. Nekrashevych (\cite[Theorem 3.7.]{nek09}):
\[\left(\mathbb K \otimes \mathcal M _G \right) \rtimes \Z \simeq \mathbb K \otimes \mathcal O _G
\]
where $\mathbb K$ is the (nuclear) algebra of compact operators. If $\mathcal M _G $ is nuclear, so is $\left(\mathbb K \otimes \mathcal M _G \right) \rtimes \Z $ (see for instance \cite[Proposition 10.1.7. p.302 and Theorem 4.2.6. p.124]{osawabrown}). Therefore, the algebra $\mathbb K \otimes \mathcal O _G $ is nuclear, and $\mathcal O _G $ as well (see for instance \cite[Proposition 10.1.7. p.302]{osawabrown}).   
\end{proof}

\begin{remarque}
Note that it is still an open question whether all contractible groups are amenable. Corollary \ref{mrgou} does not bring anything new to this question, since it is already known that contractible groups acting essentially freely on $\partial \tree$ have polynomial growth (see \cite{livrenek}). 
\end{remarque}

\subsection*{Acknowledgments.} \emph{This work is part of my PhD-thesis. I want to express my gratitude to my adviser Andrzej \.{Z}uk. I also would like to thank Mikael de la Salle for usefull discussions, Suliman Albandik for having suggested the application \ref{cortrace} of Lemma \ref{lemtrace}, and Ivan Marin for having accepted to write the following appendix with me.} 

\bibliographystyle{amsalpha}
\bibliography{biblio}

\newpage

\appendix

\section{Virtually abelian groups} \label{apen}


\centerline{\footnotesize{by \textsc{Ivan Marin}}\footnote{I. MARIN, Institut de Math\'ematiques de Jussieu, Universit\'e Paris 7. \url{marin@math.jussieu.fr}} \footnotesize{and \textsc{Jean-Fran\c{c}ois Planchat}}\footnote{J.-F. PLANCHAT, Mathematisches Institut, Georg-August Universit\"{a}t G\"{o}ttingen. \url{planchat@uni-math.gwdg.de}}} 

\bigskip

Let $\Gamma$ be a finitely generated and residually finite group. We consider a decreasing sequence $\left(\Gamma_i\right)_i$ of finite index subgroups of $\Gamma$. It is easy to see that the following conditions are equivalent:

\begin{enumerate}
 \item The natural representation that $\Gamma$ admits on $\bigoplus_{i} \ell^2 \left( \Gamma/\Gamma_i \right)$ is faithful.
 \item The natural action that $\Gamma$ admits on $\bigsqcup_{i} \Gamma/\Gamma_i$ is faithful.
 \item One has 
\[\bigcap_{i} \text{Core}\left(\Gamma_i\right)=\left\lbrace  1\right\rbrace  \]
where $\text{Core}\left(\Gamma_i\right) := \bigcap_{\gamma\in \Gamma} \gamma \Gamma_i \gamma^{-1}$ is the biggest normal subgroup of $\Gamma$ contained in $\Gamma_i$.
\end{enumerate}

\begin{defi}
An F-filtration of $\Gamma$ is a decreasing sequence $\left(\Gamma_i\right)_i$ of finite index subgroups fulfilling one of the previous equivalent conditions. 
\end{defi}

\begin{theo}\label{theoIJF} Let $\Gamma$ be a finitely generated and residually finite group. The following are equivalent.
\begin{enumerate}
\item $\Gamma$ is virtually abelian.
\item $\Gamma$ is of type $I$ (see \cite{DIXM}).
\item There exists $N\in \N$ and a F-filtration $\left(\Gamma_i\right)_i$ of $\Gamma$, such that the natural representation of $\Gamma$ on $\ell^2(\Gamma/\Gamma_i)$ can be decomposed in irreducible representations of dimension at most $N$ for all $i$.
\item There exists $N\in \N$ such that, for all F-filtrations $\left(\Gamma_i\right)_i$ of $\Gamma$, the natural representation of $\Gamma$ on $\ell^2(\Gamma/\Gamma_i)$ can be decomposed in irreducible representations of dimension at most $N$ for all $i$.
\item There exists $N \in \N$ such that every finite image finite-dimensional irreducible representation of $\Gamma$ has dimension at most $N$.
\item There exists $N\in \N$ such that every finite-dimensional irreducible unitary representation of $\Gamma$ has dimension at
most $N$.
\end{enumerate}
\end{theo}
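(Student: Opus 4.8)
The plan is to establish the equivalence by a cycle of implications, routing everything through the two classical facts: Thoma's theorem (a discrete group is type $I$ if and only if it is virtually abelian, see \cite{DIXM}) and the elementary observation that a virtually abelian group has an abelian subgroup $A$ of finite index $m$, so by Clifford theory every irreducible unitary representation has dimension at most $m$. These give $(1)\Leftrightarrow(2)$ and $(1)\Rightarrow(6)$ immediately. The trivial implications $(6)\Rightarrow(5)$, $(6)\Rightarrow(4)$, $(4)\Rightarrow(3)$ are inclusions of the relevant classes of representations (every finite-image representation is finite-dimensional; every representation on $\ell^2(\Gamma/\Gamma_i)$ decomposes into finite-image irreducibles since $\mathrm{Core}(\Gamma_i)$ acts trivially and has finite index). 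So the whole content is concentrated in closing the cycle: deducing $(1)$ from the boundedness hypotheses $(3)$ or $(5)$.

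First I would prove $(5)\Rightarrow(1)$, which I expect to be the crux. Suppose $\Gamma$ is \emph{not} virtually abelian; I want to produce, for every $N$, a finite quotient of $\Gamma$ with an irreducible representation of dimension $>N$. The natural approach: since $\Gamma$ is residually finite and not virtually abelian, one shows its finite quotients cannot all be ``uniformly virtually abelian''. Concretely, if every finite quotient $\Gamma/\Gamma_i$ had an abelian subgroup of index $\le m$ (a fixed bound), one can try to pull this back: the preimages form finite-index subgroups of $\Gamma$, and a compactness/pigeonhole argument on the lattice of subgroups of bounded index should produce a single finite-index subgroup of $\Gamma$ that is abelian after intersecting over the filtration, contradicting ``not virtually abelian''. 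Here one must be careful: bounded \emph{dimension} of irreducibles of $\Gamma/\Gamma_i$ is equivalent, by a theorem of Isaacs--Passman type, to $\Gamma/\Gamma_i$ having an abelian subgroup of index bounded in terms of $N$ only; this is the key external input that converts the representation-theoretic hypothesis $(5)$ into the structural statement needed for the residual-finiteness argument. I would invoke the Isaacs--Passman theorem explicitly.

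Finally, $(3)\Rightarrow(5)$ (or directly $(3)\Rightarrow(1)$) is handled similarly but is slightly more delicate because $(3)$ only asserts the bound for \emph{one} F-filtration rather than all finite quotients. However, an F-filtration $(\Gamma_i)$ has $\bigcap_i\mathrm{Core}(\Gamma_i)=\{1\}$, so the quotients $\Gamma/\mathrm{Core}(\Gamma_i)$ separate points of $\Gamma$; applying Isaacs--Passman to each of these (their irreducibles are among those of $\Gamma/\Gamma_i$ after inducing, up to a controlled dimension factor — one needs to check induction from $\Gamma_i$ to $\Gamma/\mathrm{Core}(\Gamma_i)$ multiplies dimensions by at most $[\Gamma:\Gamma_i]$, which is \emph{not} bounded, so instead I would argue directly with the normal subgroups $N_i := \mathrm{Core}(\Gamma_i)$ and the representations of $\Gamma/N_i$, noting that $\ell^2(\Gamma/\Gamma_i)$ already contains a faithful representation of $\Gamma/N_i$). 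The resulting bounded abelian index for the cofinal system $\Gamma/N_i$, together with $\bigcap N_i = \{1\}$ and the pigeonhole argument over subgroups of bounded index, forces $\Gamma$ itself to be virtually abelian. The main obstacle, as noted, is the passage from a uniform bound on representation dimensions to a uniform bound on abelian index (Isaacs--Passman), and then the limiting argument showing a \emph{uniform} family of finite-index abelian subgroups of the quotients descends to an honest finite-index abelian subgroup of $\Gamma$.
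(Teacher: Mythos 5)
Your web of easy implications matches the paper's, and two of your legs are sound: $(1)\Leftrightarrow(2)$ via Thoma/Dixmier, $(1)\Rightarrow(6)$ by Frobenius reciprocity (every irreducible $V$ embeds in $\mathrm{Ind}_A^{\Gamma}\chi$ for a character $\chi$ of the finite-index abelian subgroup $A$ occurring in $\mathrm{Res}_A V$, so $\dim V\leq[\Gamma:A]$ --- in fact simpler than the paper's argument via projective representations and Schur covers), and $(5)\Rightarrow(1)$ via Isaacs--Passman applied to every finite quotient followed by the pigeonhole on the finitely many subgroups of index $\leq m$ in the finitely generated group $\Gamma$. That last leg is a genuinely different and perfectly workable route; the paper does not use Isaacs--Passman at all.

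The genuine gap is in $(3)\Rightarrow(1)$, and unfortunately this is the one implication you cannot do without: given the trivial chain $(6)\Rightarrow(5)\Rightarrow(4)\Rightarrow(3)$, it is precisely $(3)\Rightarrow(1)$ that closes the cycle, whereas your $(5)\Rightarrow(1)$ leaves $(3)$ as a dead end. Hypothesis $(3)$ only bounds the dimensions of the irreducible constituents of the particular permutation representations $\ell^2(\Gamma/\Gamma_i)$; it does \emph{not} bound the degrees of all irreducible representations of the finite quotients $\Gamma/\mathrm{Core}(\Gamma_i)$, so Isaacs--Passman simply does not apply to those quotients. Your proposed repair --- that $\ell^2(\Gamma/\Gamma_i)$ is a faithful representation of $\Gamma/\mathrm{Core}(\Gamma_i)$ --- does not help: a finite group can admit a faithful representation all of whose irreducible constituents have dimension $\leq N$ while having irreducible representations of arbitrarily large dimension and no abelian subgroup of index bounded in terms of $N$ alone (e.g.\ $\mathfrak S_5^{\,k}$ with the sum of the pullbacks of the standard representation along the $k$ projections). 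So the quotient-by-quotient strategy breaks down. The paper's argument avoids this by working globally on $\Gamma$: each irreducible constituent (over \emph{all} levels $i$) gives a finite image $G_j<\mathcal U(\C^N)$; Jordan's theorem yields a normal abelian $H_j\lhd G_j$ of index $\leq q=q(N)$, hence homomorphisms $\tilde\varphi_j:\Gamma\to G_j/H_j<\mathfrak S_q$; since $\Gamma$ is finitely generated, $\Hom(\Gamma,\mathfrak S_q)$ is finite, so $H=\bigcap_j\Ker\tilde\varphi_j$ is a \emph{finite} intersection of finite-index subgroups, and its image in $\bigoplus_j G_j$ lies in the abelian group $\bigoplus_j H_j$, whence $H$ is a finite-index abelian subgroup of $\Gamma$. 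The finiteness of $\Hom(\Gamma,\mathfrak S_q)$ is the idea your outline is missing; with it, no per-quotient bound and no pigeonhole over the filtration are needed.
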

\begin{proof}

\medskip
(1) is equivalent to (2) by \cite[\S 13.11.12 p.308]{DIXM}. (6) implies (5) and (5) implies (4) trivially. (4) implies (3) since  $\Gamma$ is residually finite.

We prove that (3) implies (1). By assumption, each $\ell^2(\Gamma/\Gamma_i)$ is the direct sum
of irreducible representations of dimension at most $N$, which are all unitary since the representation of $\Gamma$
on $\ell^2(\Gamma/\Gamma_i)$ is. By considering all $i$, we thus get
an embedding $\Gamma \into \bigoplus_{j \in J} G_j$ with $G_j < U_n :=\mathcal U \left(\C^N\right)$ a finite subgroup of $U_N$.
By Jordan's theorem (see \cite{RAGU}) there exists $q \geq 0$ such that every finite subgroup of $U_N$ has a normal abelian
subgroup of index at most $q$. For all $j\in J$, we let $H_j$ be such a subgroup for $G_j$. We denote by $\varphi_j : G_j \onto G_j/H_j$ the canonical projection
and $\tilde{\varphi}_j : \Gamma \to G_j/H_j$ the induced morphism. By assumption $|G_j/H_j| \leq q$
hence $G_j/H_j < \mathfrak{S}_q$. The set of subgroups $\Ker \tilde{\varphi}_j < \Gamma$ is thus a subset
of $\{ \Ker \psi \ | \ \psi \in \Hom(\Gamma,\mathfrak{S}_q) \}$,
and $\Hom(\Gamma,\mathfrak{S}_q)$ is finite because $\Gamma$ is finitely generated. It follows that
$H = \bigcap_j \Ker \tilde{\varphi}_j$ is a finite index subgroup of $\Gamma$. Moreover, $H$ is abelian
because its image in $\bigoplus_j G_j$ is abelian.

We now prove that (1) implies (6), and let $H \vartriangleleft \Gamma$ be an abelian normal subgroup such that $\Gamma/H$ is finite.
Let $\rho : \Gamma \to \mathcal U \left( V \right)$ with $V$ a finite-dimensional hermitian space be a unitary representation of $\Gamma$.
The restriction of $\rho$ to $H$ is a direct sum $\bigoplus_{\chi \in \Irr(H) } a_{\chi} \chi$ where $\Irr(H)$
denotes the set of irreducible (1-dimensional) unitary representations of $H$, and $a_{\chi}\in \N$ denotes the multiplicity of $\chi$. Letting 
\[V_r := \bigoplus_{a_{\chi} = r}r \chi\]
we have a canonical decomposition 
\[V = \bigoplus_{r \geq 0} V_r.\] 

We first prove that $V = V_r$ for some $r$. Since
$V$ is irreducible, this is equivalent to saying that $\Gamma. V_r \subset V_r$ for all $r$.  Since $V$ is
finite dimensional, this is equivalent (by descending induction on $r$) to saying that $\Gamma. V_r \subset \bigoplus_{s \geq r} V_s$ for all $r$. We prove this last statement, and decompose $V_r = U_1 \oplus \dots \oplus U_m$ with
$\dim U_i = r$ and $h(x) = \chi_i(h) x$ for every $x \in U_i, h \in H$, for the given $\chi_i \in \Irr(H)$ associated
to $i$ (by construction, $i \neq j \Leftrightarrow \chi_i \neq \chi_j$). Let $g \in \Gamma$, and $i \in \{ 1, \dots,m \}$.
Since $H $ is a normal subgroup of $ \Gamma$, the subspace $g.U_i$ is $H$-stable, hence is included in some $V_s$. Moreover,
$H$ acts through $\chi_i^g : h \mapsto \chi_i(g^{-1}hg)$ on $g . U_i$, hence $\chi_i^g \in \Irr(H)$ occurs in $V_s$
with multiplicity at least $r$. It follows that $s \geq r$. 

Thus 
\[V = V_r =  U_1 \oplus \dots \oplus U_m.\] 

Moreover, we proved $g. U_i = U_{\pi(g)(i)}$
for some $\pi(g) \in \mathfrak{S}_m$, and clearly $\pi : \Gamma \to \mathfrak{S}_m$ is a group morphism, which
factors through $\Gamma/H$. By irreducibility
of $V$, $\pi(\Gamma)= \pi(\Gamma/H)$ must be transitive, which implies $m \leq |\Gamma/H|$.

We now introduce $St(U_1) = \{ g \in \Gamma \ | \ g .U_1 = U_1 \}$, and prove that $U_1$ is irreducible
under $St(U_1)$. Let $E \subset U_1$ be non-zero $St(U_1)$-stable subspace. By irreducibility of $V$ under $\Gamma$, we have
$\sum_{g \in \Gamma} g E = V$. But $g E \subset U_{\pi(g)(1)}$ for all $g \in \Gamma$,
so this can be true only if 

\[\sum_{g \in \Gamma \ | \ \pi(g)(1) = 1} g E = U_1 \ \Longleftrightarrow \ E = \sum_{g \in St(U_1)} g E = U_1, \]
which proves that $U_1$ is $St(U_1)$-irreducible. Since $H$ acts by scalars on $U_1$, this last subspace can be considered as an irreducible projective representation of the subgroup $St(U_1)/H$ of $\Gamma/H$. This last group is finite, hence it has a finite number of subgroups, and
these subgroups have a finite number of irreducible projective representations (for instance by
the theory of Schur covers, see \cite{KARPI}). Taking for $e$ the maximal degree of such representations, we get
$r \leq e$ hence $\dim V = m r \leq N = |\Gamma/H| e$, which proves the claim.
\end{proof}

\end{document}